\newtheorem{thrm}{Theorem}[section]
\newtheorem{lem}[thrm]{Lemma}
\theoremstyle{definition}
\newtheorem{definition}[thrm]{Definition}
\newtheorem{defin}[thrm]{Definition}
\newtheorem{mainthrm}[thrm]{Main Theorem}
\numberwithin{equation}{section}
\newcommand{\labeq}[1]{\label{eq:#1}}
\newcommand{\refeq}[1]{(\ref{eq:#1})}
\newcommand{\labt}[1]{\label{thm:#1}}
\newcommand{\reft}[1]{Theorem~\ref{thm:#1}}
\newcommand{\labmt}[1]{\label{thm:#1}}
\newcommand{\refmt}[1]{Main~Theorem~\ref{thm:#1}}
\newcommand{\labl}[1]{\label{lemma:#1}}
\newcommand{\refl}[1]{Lemma~\ref{lemma:#1}}
\newcommand{\labd}[1]{\label{definition:#1}}
\newcommand{\refd}[1]{Definition~\ref{definition:#1}}
\newcommand{\refs}[1]{Section~\ref{section:#1}}
\newcommand{\labs}[1]{\label{section:#1}}
\newcommand{\dimh}[1]{\hbox{$\dim_{\hbox{H}}$}\left( #1\right)}
\newcommand{\NN}{\mathbb{N}_2^{\mathbb{N}}}
\newcommand{\wrtQ}{\hbox{ w.r.t. }Q}
\newcommand{\floor}[1]{\left\lfloor #1 \right\rfloor} 
\newcommand{\ceil}[1]{\left\lceil #1 \right\rceil} 
\newcommand{\br}[1]{\left\{ #1 \right\}}
\newcommand{\abs}[1]{\left| #1 \right|}
\newcommand{\pr}[1]{\left( #1 \right)}
\newcommand{\Ob}[1]{\mathscr{O}_b( #1 )}
\newcommand{\OQ}[1]{\mathscr{O}_Q ( #1 )}
\newcommand{\Obx}{\Ob{x}}
\newcommand{\OQx}{\OQ{x}}
\newcommand{\OQmrx}{\mathscr{O}_{Q,m,r}(x)}
\newcommand{\OQfmrx}[1]{\mathscr{O}_{Q, #1, m, r}(x)}
\newcommand{\OQfx}{\mathscr{O}_{Q,f}(x)}
\newcommand{\OQp}[1]{\mathscr{O}'_Q ( #1 )}
\newcommand{\OQxp}{\OQp{x}}
\newcommand{\OQmrxp}{\mathscr{O}'_{Q,m,r}(x)}
\newcommand{\OQfmrxp}[1]{\mathscr{O}'_{Q, #1, m, r}(x)}
\newcommand{\OQfxp}{\mathscr{O}'_{Q,f}(x)}
\newcommand{\Fpqstart}[4]{ \frac{\#( #1(\mathbb{N}) \bigcap #2(\mathbb{N}) \bigcap \{ #3, #3+1, \cdots #4\})}{\# (#2(\mathbb{N}) \bigcap \{ #3, #3+1, \cdots #4\})}}
\newcommand{\dg}{\rho}
\newcommand{\uD}[2]{\overline{D}_{#1}^{#2}}
\newcommand{\supp}[1]{\text{supp(} #1 \text{)}}
\newcommand{\lcm}{\text{lcm}}
\newcommand{\de}{\delta}
\newcommand{\blank}[1]{ }
\author[D. Airey]{Dylan Airey}
\address[D. Airey]{
Department of Mathematics, University of Texas at Austin, 2515 Speedway, Austin, TX 78712-1202, USA}
\ead{dylan.airey@utexas.edu}
\author[B. Mance]{Bill Mance}
\address[B. Mance]{Department of Mathematics, University of North Texas, General Academics Building 435, 1155 Union Circle,  \#311430, Denton, TX 76203-5017, USA}
\ead{mance@unt.edu}
\begin{document}

\title{Unexpected distribution phenomenon resulting from Cantor series expansions}

\begin{abstract}
We explore in depth the number theoretic and statistical properties of certain sets of numbers arising from their Cantor series expansions.
As a direct consequence of our main theorem we deduce numerous new results
as well as strengthen known ones. 
\end{abstract}

\maketitle

\section{Introduction}


We will prove a general result that will have six seemingly unrelated classes of number theoretic applications.  Unfortunately, it will take several pages  to state this result.  After this we describe the applications and then prove our theorem.

The $Q$-Cantor series expansion, first studied by G. Cantor in \cite{Cantor}
is a natural generalization of the $b$-ary expansion. Let $\mathbb{N}_k:=\mathbb{Z} \cap [k,\infty)$.  If $Q \in \NN$, then we say that $Q$ is a \textbf{ basic sequence}.  If $\lim_{n \to \infty} q_n=\infty$, then we say that $Q$ is \textbf{ infinite in limit}.
Given a basic sequence $Q=\{q_n\}_{n=1}^{\infty}$, the \textbf{$Q$-Cantor series expansion} of a real $x$ in $\mathbb{R}$ is the (unique) 
expansion of the form
\begin{equation} \labeq{cseries}
x=E_0(x)+\sum_{n=1}^{\infty} \frac {E_n(x)} {q_1 q_2 \cdots q_n},
\end{equation}
where $E_0(x)=\floor{x}$ and $E_n(x)$ is in $\{0,1,\cdots,q_n-1\}$ for $n\geq 1$ with $E_n(x) \neq q_n-1$ infinitely often. We will write $E_n$ in place of $E_n(x)$ when there is no room for confusion.  Moreover, we will abbreviate \refeq{cseries} with the notation $x=E_0.E_1E_2E_3\cdots\wrtQ$.
Clearly, the $b$-ary expansion is a special case of \refeq{cseries} where $q_n=b$ for all $n$.  If one thinks of a $b$-ary expansion as representing an outcome of repeatedly rolling a fair $b$-sided die, then a $Q$-Cantor series expansion may be thought of as representing an outcome of rolling a fair $q_1$ sided die, followed by a fair $q_2$ sided die and so on.

The study of normal numbers and other statistical properties of real numbers with respect to large classes of Cantor series expansions was started by P. Erd\H{o}s, A. R\'{e}nyi and P. Tur\'{a}n.
This early work was done by P. Erd\H{o}s and A. R\'{e}nyi in \cite{ErdosRenyiConvergent} and \cite{ErdosRenyiFurther} and by A. R\'{e}nyi in \cite{RenyiProbability}, \cite{Renyi}, and \cite{RenyiSurvey} and by P. Tur\'{a}n in \cite{Turan}.

We recall the following standard definitions (see \cite{KuN}).
An \textbf{asymptotic distribution function} $f: [0,1] \to [0,1]$ is a non-decreasing function such that $f(0)=0$ and $f(1)=1$.
For a sequence of real numbers $\omega = \{x_n\}$ with $x_n \in [0,1)$ and an interval $I \subseteq [0,1]$,  define $A_n(I,\omega) := \# \{i\leq n: x_i \in I \}$.
A sequence of real numbers $\omega = \{x_n\}$ has asymptotic distribution function $f$ if 
\begin{equation*}
\lim_{n\to\infty} \frac{A_n([0,x), \omega)}{n} = f(x). 
\end{equation*}
For the rest of this paper we will abbreviate asymptotic distribution function as adf.
We say that a sequence $\omega$ is \textbf{uniformly distributed mod $1$} if $\omega$ has $f(x)=x$ as its adf.    For the rest of this paper we will abbreviate uniformly distributed mod 1 as u.d. mod 1.  Clearly, not all sequences have an adf.
A sequence of real numbers $\omega = \{x_n\}$ has \textbf{upper asymptotic distribution function} $\overline{f}$ if $$\varlimsup_{n \to \infty} \frac{A_n([0,x),\omega)}{n} = \overline{f}(x). $$
The sequence $\omega$ has \textbf{lower asymptotic distribution function} $\underline{f}$ if $$\liminf_{n \to \infty} \frac{A_n([0,x),\omega)}{n} = \underline{f}(x).$$
Every sequence of real numbers $\omega$ has an upper and a lower adf.
We note that the sequence $\omega$ has adf $f(x)$ if and only if $f=\overline{f}=\underline{f}$.  


A great deal of information about the $b$-ary expansion of a real number $x$ may be obtained by studying the distributional properties of the sequence $\Obx:=\{b^n x \}_{n=0}^\infty$.  For example, it is well known that a real number $x$ is normal in base $b$ if and only if the sequence $\Obx$ is uniformly distributed mod $1$.

Thus, we are motivated to make the following definitions for the Cantor series expansions.
For every basic sequence $Q$, define $T_{Q,n}(x):=q_n q_{n-1}\cdots q_1 x \pmod{1}$ and $\OQx:=\br{T_{Q,n}(x)}_{n=0}^\infty$. For integers $m$ and $r$, we define $\OQmrx := \br{T_{Q,mn+r}(x)}_{n=0}^\infty$.  For any eventually increasing function $f: \mathbb{N} \to \mathbb{N}$,  we define $\OQfmrx{f} := \br{T_{Q,f(mn+r)}(x)}_{n=0}^\infty$.  Furthermore, set $\OQxp:=\br{\frac {E_n} {q_n}}_{n=1}^\infty$. The sequences $\OQmrxp$ and $\OQfmrxp{f}$ are defined similarly.

\

One of the main goals of this paper that will be resolved in \refmt{Main} will be to show that for a large class of functions $f$ the set of real numbers $x$ where the sequence $\OQfmrx{f}$ has a specified upper and lower adf has full Hausdorff dimension.  Numerous results will follow from this fact.  We will need to give several definitions to fully describe the scope of this theorem.

\

We should note that the relationship between the digits of the $Q$-Cantor series expansion of a real number $x$ and the sequence $\OQx$ is far more complex than the analogous relationship for $b$-ary expansions.  The most current results can be found in \cite{ppq1}.  Thus, when generalizing problems involving digits in some $b$-ary expansion, we can consider either a problem involving digits in a $Q$-Cantor series expansion or a problem involving the distributional properties of the sequence $\OQx$.  Often the theory will be different.  For this paper we will always choose the latter option.

The sequence $\OQx$ was studied by J. Galambos in \cite{Galambos2} and by T. \u{S}al\'{a}t in \cite{Salat} and several other papers. The main focus of this paper is to study 
sets of reals numbers $x$ so that $\OQx$ and various subsequences of $\OQx$ have specific upper and lower adfs. 
This will allow us to attack a wide range of problems.



\begin{definition}
A set of  functions $\{ f_{m,r} \}_{m \in \mathbb{N}, 0\leq r < m}$ is called a \textbf{linear family} if for all $m$, $r$, and $d$
\begin{equation*}
f_{m,r} = \frac{1}{d} \sum_{i=0}^{d-1} f_{md, mi+r}.
\end{equation*}
\end{definition}
The set of functions $\{f_{m,r}\}$ where $f_{m,r}(x) = x$ for all $m$ and $r$ gives an example of a linear family of adfs.
A non-trivial example is given in the proof of \reft{NormNotAP} in \refs{OtherProofs}.

For $p$, $q \in \mathbb{Z}[X]$, set
\begin{equation*}
\de(p,q,s,n) := \Fpqstart{p}{q}{s}{n}.
\end{equation*}
Define a relation $\lesssim$ among polynomials $p$, $q \in \mathbb{Z}[X]$ so
$$
q \lesssim p \hbox{ if } \varlimsup_{n-s \to \infty} \de(p,q,n,s) > 0.
$$
If  $p \lesssim q$ and $q \lesssim p$, then we write $p \approx q$.  It is easy to verify that $\lesssim$ is a preorder but not a partial order. Similarly, we can show that $\approx$ is an equivalence relation.

\begin{definition}
A set of polynomials $P \subseteq \mathbb{Z}[X]$ 
 is \textbf{saturated} if for any $f \in \mathbb{Z}[X]$ there exists a polynomial $p \in P$ and a linear polynomial $\mu \in \mathbb{Z}[X]$ 
such that $f = p \circ \mu$. The set $P$ is \textbf{sparsely intersecting} if for each $i$ and $j$ we have $p_i \lnsim p_j$ or $p_j \lnsim p_i$.
\end{definition}

We will prove the following lemma in \refs{polyisect}.
\begin{lem}\labl{iscomplete}
There is a saturated sparsely intersecting set of polynomials.
\end{lem}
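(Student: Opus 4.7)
I would construct $P$ by an inductive enumeration argument. Fix an enumeration $\mathbb{Z}[X] = \{f_1, f_2, \ldots\}$ and build an increasing chain of finite subsets $P_0 \subseteq P_1 \subseteq \cdots$, setting $P := \bigcup_k P_k$. I initialize $P_0 = \{x\}$: the identity polynomial covers every $f \in \mathbb{Z}[X]$ of degree at most $1$ via the decomposition $f = x \circ f$, so the substantive work is with polynomials of degree at least $2$.

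At stage $k$, if $f_k = p \circ \mu$ for some $p \in P_{k-1}$ and linear $\mu \in \mathbb{Z}[X]$, I set $P_k := P_{k-1}$, preserving saturation. Otherwise I adjoin a new polynomial $p_k \in \mathbb{Z}[X]$ together with a linear $\mu_k \in \mathbb{Z}[X]$ satisfying $f_k = p_k \circ \mu_k$, chosen so that the sparsely intersecting property with every previously added element is preserved. The candidate family for $p_k$ is infinite: it contains at least the translates $p_k(y) = f_k(y - d)$ for $d \in \mathbb{Z}$ (with $\mu_k(x) = x + d$), and in general the family of all $p_k(y) = f_k((y-d)/c)$ with $(c,d) \in \mathbb{Z}^2$ for which $p_k$ has integer coefficients.

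The key tool for the sparse intersection verification is the theorem of Bilu and Tichy on integer points of the polynomial curve $p(x) = q(y)$: two non-constant polynomials in $\mathbb{Z}[X]$ without a joint decomposition through one of an explicit finite list of ``standard pairs'' have only finitely many common integer values, and hence $\delta(p,q,s,n)$ and $\delta(q,p,s,n)$ both tend to $0$ uniformly as $s \to \infty$. Consequently $p \not\lesssim q$ and $q \not\lesssim p$, which forces the required inequality in the definition of sparsely intersecting. Since each $p_j \in P_{k-1}$ imposes only finitely many algebraic constraints (one per standard-pair shape) on the candidate $\mu_k$, an infinite candidate family contains a $\mu_k$ satisfying all the sparse intersection conditions simultaneously; the resulting $p_k$ is then adjoined.

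The main obstacle is handling rigid $f_k$ whose coefficient divisibility collapses the candidate family to mere shifts $\mu_k(x) = \pm x + d$: a shift of the argument does not change the asymptotic arithmetic of the image, so $p_k$ is then $\approx$-equivalent to $f_k$, and the sparse intersection check against some offending $p_j$ may fail. In this case one must argue either that $f_k$ is already covered by $P_{k-1}$ (so no new polynomial need be added, returning us to the easy branch), or that there exists a common ``hull'' polynomial $\phi \in \mathbb{Z}[X]$ factoring through both $f_k$ and $p_j$, which can be adjoined to $P$ in place of $p_k$ so as to dissolve the conflict. Carrying out this structural argument via a case analysis of the Bilu--Tichy standard pairs is the heart of the proof.
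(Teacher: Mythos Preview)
Your primary mechanism---varying the linear factor $\mu_k$ so that the resulting $p_k$ avoids $\approx$-conflicts with $P_{k-1}$---cannot succeed. By the very relation $f_k = p_k \circ \mu_k$ (with $\mu_k$ linear in $\mathbb{Z}[X]$) and the characterisation in the paper's \refl{SparseIntersect}, one always has $p_k \approx f_k$; hence $p_k \approx p_j$ holds if and only if $f_k \approx p_j$, \emph{independently} of the choice of $\mu_k$. Equivalently, in the Bilu--Tichy language, any standard-pair decomposition witnessing $p_k(x)=p_j(y)$ absorbs a linear reparametrisation of $p_k$ into the linear factor $u$, so your ``finitely many algebraic constraints on $\mu_k$'' are not constraints that a generic $\mu_k$ can dodge---they are vacuously satisfied by every $\mu_k$ or by none. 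The obstruction therefore arises whenever $f_k \approx p_j$ for some $p_j\in P_{k-1}$, not only for the ``rigid'' $f_k$ you single out. A concrete instance: with $p_j(x)=4x^2$ already in $P_{k-1}$ and $f_k(x)=x^2$, one has $f_k(2x)=p_j(x)$ so $f_k\approx p_j$, yet $f_k$ is not of the form $p_j\circ\mu$ for any linear $\mu\in\mathbb{Z}[X]$; your first branch does not trigger, and every candidate $p_k$ remains $\approx p_j$.

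Your fallback---adjoin a common hull $\phi$ with $f_k=\phi\circ\alpha$ and $p_j=\phi\circ\beta$---is precisely the move the paper makes, but it forces $\phi\approx p_j$, so adding $\phi$ ``in place of $p_k$'' while retaining $p_j$ destroys sparse intersection. One must \emph{replace} $p_j$ by $\phi$, which is incompatible with your commitment to an increasing chain $P_0\subseteq P_1\subseteq\cdots$. The paper's proof drops that commitment: it enumerates $\mathbb{Z}[X]$, and whenever the current $p_i$ satisfies $p_i\approx p_j$ for some earlier $p_j$, it invokes \refl{SparseIntersect} to produce a hull $q$ with $p_i=q\circ\lambda$, $p_j=q\circ\mu$, then replaces $p_j$ by $q$ and deletes $p_i$ (taking $P=\bigcap_i P_i$ at the end). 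Saturation survives each replacement because anything previously covered by $p_j$ is covered by $q$ via $\mu$. Your proposal would be salvaged by abandoning the increasing-chain framework and making this replacement step the main engine rather than a contingency.
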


A real number $x$ is \textbf{computable} if there exists $b \in \mathbb{N}$ with $b\geq 2$ and a total recursive function $f: \mathbb{N} \to \mathbb{N}$ that calculates the digits of $x$ in base $b$. A sequence of real numbers $\{x_n\}$ is \textbf{computable} if there exists a total recursive function $f: \mathbb{N}^2 \to \mathbb{Z}$ such that for all $m,n$ we have that $\frac{f(m,n)-1}{m} < x_n< \frac{f(m,n)+1}{m}$. 
A sequence of functions $\{f_n\}$ from a metric space $X$ to $\mathbb{R}$ is \textbf{uniformly computable} if the double sequence $\{f_n(x_m)\}$ is computable for any computable sequence $\{x_m\}$ and if there is a recursive function $\gamma(n,k)$ such that for all $n,k$ and $x,y \in X$, we have that $d(x,y)\leq \frac{1}{2^{\gamma(n,k)}}$ implies $|f_n(x)-f_n(y)|\leq \frac{1}{2^k}$. A function $f$ is \textbf{uniformly computable} if the sequence $\{f,f,f,\cdots \}$ is uniformly computable. A sequence $\{x_n\}$ is \textbf{uniformly computable} if there is a uniformly computable function $f: \mathbb{N}\to \mathbb{R}$ such that $f(n) = x_n$ \cite{TakaMori}.

\begin{definition}
A basic sequence $Q = \{q_n\}$ 
is a \textbf{computably growing basic sequence} if it is infinite in limit and
the sequence $\{ \inf\{i : \forall j \geq i \ (q_j \geq n) \} \}_{n=1}^\infty$ is computable.
\end{definition}

\begin{definition}
A linear family of adfs $\{ f_{m,r} \}_{m \in \mathbb{N}, 0\leq r < m}$  is an \textbf{explicit linear family of adfs} if for each $m,r \in \mathbb{N}$ with $0\leq r < m$ the following hold.
\begin{enumerate}
\item The real numbers $f_{m,r}(q)$ and $\inf \br{x\in [0,1] : f_{m,r}(x) = q}$ are computable for every rational $q \in [0,1]$.

\item If $f_{m,r}$ is discontinuous at $t$, then $t$ is a computable real number and $f_{m,r}(t)$ is a computable real number.

\item The function $f_{m,r}$ is either continuous, has only finitely many discontinuities, or the set of its discontinuities may be written in the form $\br{t_n : n \in \mathbb{N}}$, where $\br{t_n}$ is a uniformly computable sequence.
\end{enumerate}
\end{definition}

\begin{definition}
A sparsely intersecting set of polynomials $P = \{ p_i \}$ is an \textbf{explicit sparsely intersecting set of polynomials} if for all
$p, q \in P$ there exists a computable sequence $\{N(m)\}$ such that if $n-s>N(m)$, then $d_{p,q,s,n}<\frac{1}{m}$ or $d_{q,p,s,n} < \frac{1}{m}$.
\end{definition}


Given a basic sequence $Q$, a set of sparseley intersecting polynomials $P$, and a set of linear families 
\begin{equation}\labeq{Fadf}
F=\br{\br{ \overline{f}_{p,m,r} }_{m \in \mathbb{N}, 0\leq r < m}}_{p \in P} \cup \br{\br{ \underline{f}_{p,m,r} }_{m \in \mathbb{N}, 0\leq r < m}}_{p \in P}
\end{equation}
 of  adfs, define
\begin{align*}
\Phi_{Q,P,F} = \Big\{ x \in [0,1) : &\OQfmrx{p} \text{ has upper and lower adfs } \\
&\overline{f}_{p,m,r}, \underline{f}_{p,m,r}\in F, \forall p \in P, m \in \mathbb{N}, 0\leq r < m\Big\}.
\end{align*}

We may now state the main theorem of this paper.

\begin{mainthrm}\labmt{Main}
If $Q$ is infinite in limit, $P$ is a set of sparsely intersecting polynomials, and $F$ is a set of linear families of upper and lower adfs given by \refeq{Fadf}, then
\begin{equation*}
\dimh{\Phi_{Q,P,F}} = 1.
\end{equation*}
Furthermore, if $Q$ is computable and computably growing, $P$ is explicit, and $F$ is explicit, then there is a subset $\Phi'_{Q,P,F}$ of $\Phi_{Q,P,F}$ such that the following hold.
\begin{enumerate}
\item The set $\Phi'_{Q,P,F}$ has full Hausdorff dimension.
\item There exist computable sequences $\{\alpha(n)\}$ and $\{\beta(n)\}$ such that $$ \Phi'_{P,Q,F} = \{ x \in [0,1) : \alpha(n) \leq E_n(x) \leq \beta(n)\}.$$
\end{enumerate}
The set $\Phi'_{Q,P,F}$ and computable sequences $\{\alpha(n)\}$ and $\{\beta(n)\}$ are constructed in \refs{construction}.
\end{mainthrm}
\refmt{Main} is proven in \refs{construction}.

\subsection{Application I: Equivalent definitions of normality}

We recall the modern definition of a normal number.

\begin{defin}\labd{normal}
A real number $x$ is {\bf normal of order $k$ in base $b$} if all blocks of digits of length $k$ in base $b$ occur with relative frequency $b^{-k}$ in the $b$-ary expansion of $x$. Moreover, $x$ is {\bf simply normal in base $b$} if it is normal of order $1$ in base $b$ and $x$ is {\bf normal in base $b$} if it is normal of order $k$ in base $b$ for all natural numbers $k$.
\end{defin}

It is well known that \'{E}. Borel \cite{BorelNormal} was the first mathematician to study normal numbers.  In 1909 he gave the following definition.

\begin{defin}[\'{E}. Borel]\labd{normalBorel}
A real number $x$ is {\bf normal in base $b$} if each of the numbers $x, bx, b^2x,\cdots$ is simply normal (in the sense of \refd{normal}), in each of the bases $b, b^2, b^3, \cdots$.
\end{defin}

E. Borel proved that Lebesgue almost every real number is normal, in the sense of \refd{normalBorel}, in all bases.  In 1940, S. S. Pillai \cite{Pillai2} simplified \refd{normalBorel} by proving that
\begin{thrm}[S. S. Pillai]\labt{Pillai}
For $b \geq 2$, a real number $x$ is normal in base $b$ if and only if it is simply normal in each of the bases $b, b^2, b^3, \cdots$.
\end{thrm}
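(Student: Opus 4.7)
The plan is to prove both directions via direct counting of block frequencies in the base-$b$ expansion $x = 0.x_1 x_2 x_3 \cdots$. For a length-$k$ block $w$, let $N(w,n)$ denote the number of occurrences of $w$ as $x_{i+1}\cdots x_{i+k}$ with $0 \leq i \leq n-k$, and let $N_r(w,n)$ denote the same count restricted to starting positions $i \equiv r \pmod m$. Normality of $x$ in base $b$ is the statement $N(w,n)/n \to b^{-k}$ for every $w$ and every $k$, while simple normality in base $b^m$ is the statement $N_0(w, Nm)/N \to b^{-m}$ for every length-$m$ block $w$, since base-$b^m$ digits correspond precisely to length-$m$ base-$b$ blocks at aligned positions $\equiv 1 \pmod m$.

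For the direction $(\Leftarrow)$, which is Pillai's contribution, I assume $x$ is simply normal in $b^M$ for every $M$ and prove normality of order $k$ in base $b$ for each fixed $k$. Pick $M \gg k$ and partition the first $NM$ base-$b$ digits into $N$ consecutive length-$M$ windows, one per base-$b^M$ digit. Simple normality in $b^M$ gives that each length-$M$ block $W$ occurs as a window with asymptotic frequency $b^{-M}$. I split occurrences of $w$ into \emph{interior} ones (wholly inside a window) and \emph{boundary} ones (straddling two consecutive windows). The weighted average of interior occurrences per window equals $(M-k+1)b^{-k}$, since each of the $M-k+1$ positions within a window admits exactly $b^{M-k}$ length-$M$ blocks having $w$ at that position, each of weight $b^{-M}$. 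Boundary occurrences contribute at most $k-1$ per window-boundary, hence $O(kN)$ in total. Consequently $N(w,NM)/(NM) = (M-k+1)b^{-k}/M + O(k/M)$; letting $M \to \infty$ and interpolating to general $n$ via monotonicity of $N(w,\cdot)$ gives $N(w,n)/n \to b^{-k}$. Taking $k$ arbitrary yields normality of $x$ in base $b$.

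For the direction $(\Rightarrow)$, I assume $x$ is normal in base $b$ and aim to show simple normality in $b^m$ for each $m$. This amounts to $N_0(w,n)/n \to b^{-m}/m$ for every length-$m$ block $w$. The naive identity $N(w,n) = \sum_{r=0}^{m-1} N_r(w,n)$ only controls the total $\sum_r N_r(w,n)/n \to b^{-m}$ and does not separate residue classes. I would invoke Wall's theorem, which in fact strengthens the conclusion to full normality (not merely simple normality) in $b^m$. Wall's argument applies the normality hypothesis to blocks of length $mL$ for growing $L$: each such block has frequency $b^{-mL}$, and summing over the $b^{m(L-1)}$ length-$mL$ blocks whose aligned length-$m$ subblock at a prescribed position equals $w$ yields a contribution $b^{-m}$ per aligned position, which is then distributed across residue classes by a careful bookkeeping argument.

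The main obstacle is the forward direction: extracting equidistribution in an arithmetic subprogression of starting positions from equidistribution at all starting positions is delicate and relies on the specific structure of a normal digit sequence, rather than being formally deducible for an arbitrary sequence. The backward direction, by contrast, is essentially bookkeeping of an interior/boundary decomposition, with the boundary error $O(k/M)$ safely absorbed in the limit $M \to \infty$.
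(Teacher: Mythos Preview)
The paper does not prove this theorem; it is quoted as a classical result of Pillai (1940), with a pointer to Maxfield's simpler proof, so there is no argument in the paper to compare your proposal against.

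On the substance of your attempt: note first a definitional mismatch. In the paper's context, ``normal in base $b$'' in Pillai's theorem means Borel's \refd{normalBorel}, not the modern \refd{normal}. Under Borel's definition the forward direction is trivial (set $j=0$), and Pillai's contribution is the backward direction: from simple normality of $x$ in every base $b^m$, deduce simple normality of every $b^j x$ in every base $b^m$, i.e.\ that length-$m$ base-$b$ blocks are equidistributed along \emph{every} residue class of starting positions modulo $m$, not only the aligned class. Your interior/boundary argument establishes the implication ``simply normal in all $b^M$ $\Rightarrow$ modern normal,'' which is correct and useful, but it does not by itself give Pillai's conclusion, since modern normality controls only the aggregate block count over all starting positions and not each residue class separately.

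Under your reading (with ``normal'' taken as \refd{normal}), the equivalence you are aiming at is really Pillai together with Niven--Zuckerman. Your backward direction is fine. Your forward direction, however, is left incomplete: you correctly identify that separating $N(w,n)=\sum_r N_r(w,n)$ into residue classes is the crux, and then defer to Wall's theorem without carrying out the argument. That is precisely the nontrivial step (historically, Niven--Zuckerman's), so as written the proposal contains a genuine gap in that direction.
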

\reft{Pillai} was improved in 1951 by I. Niven and H. S. Zuckerman \cite{NivenZuckerman} who proved
\begin{thrm}[I. Niven and H. S. Zuckerman]\labt{NivenZuckerman}
\refd{normal} and \refd{normalBorel} are equivalent.
\end{thrm}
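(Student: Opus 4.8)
The plan is to deduce \reft{NivenZuckerman} from Pillai's \reft{Pillai}, combined with the elementary fact that multiplication by $b^j$ only shifts the base-$b$ expansion by finitely many digits. One implication is immediate: if $x$ satisfies \refd{normalBorel}, then taking the multiplier $b^0 = 1$ shows in particular that $x$ itself is simply normal in each of the bases $b, b^2, b^3, \dots$, and \reft{Pillai} then gives that $x$ is normal in base $b$ in the sense of \refd{normal}.

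For the converse, assume $x$ is normal in base $b$ in the sense of \refd{normal} and fix $j \ge 0$. The first step is a shift lemma: if $a_1 a_2 a_3 \cdots$ is the base-$b$ expansion of the fractional part of $x$, then the base-$b$ expansion of the fractional part of $b^j x$ is $a_{j+1} a_{j+2} a_{j+3} \cdots$, so for every $k$ and every block $w$ of length $k$ the number of occurrences of $w$ among the first $n$ digits of $b^j x$ differs from the corresponding count for $x$ by a bounded number of boundary terms; dividing by $n$ and letting $n \to \infty$ shows that $b^j x$ has the same frequency $b^{-k}$ of every length-$k$ block as $x$. Hence $b^j x$ is itself normal in base $b$ in the sense of \refd{normal}, and applying \reft{Pillai} to $b^j x$ yields that $b^j x$ is simply normal in each of the bases $b, b^2, b^3, \dots$. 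As $j \ge 0$ was arbitrary, every one of $x, bx, b^2 x, \dots$ is simply normal in each of the bases $b, b^2, b^3, \dots$, which is precisely the content of \refd{normalBorel}.

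I expect essentially no obstacle here: the substantive input is \reft{Pillai}, which is already available, and everything else is bookkeeping, the one point requiring a moment's care being the shift lemma, where the content is simply that finitely many boundary terms cannot affect an asymptotic density. I would deliberately avoid attempting a proof that bypasses \reft{Pillai}: a direct argument would have to show that every length-$m$ block occurs with frequency $b^{-m}$ along each residue class modulo $m$ of starting positions, which is not a formal consequence of order-$m$ normality alone and is exactly the nontrivial combinatorial fact encapsulated in Pillai's theorem. Finally, I note that, unlike the equidistribution-type applications in this paper, \reft{NivenZuckerman} is a statement about all real numbers rather than an existence statement, so it is handled by this classical reduction rather than by \refmt{Main}.
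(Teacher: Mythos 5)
The paper does not actually prove this theorem; it is a cited result of Niven and Zuckerman (with the paper also pointing to Cassels's shorter proof), so there is no internal proof to compare against. Evaluating your argument on its own, it is circular, and the circularity enters through a misreading of \reft{Pillai}. The paper's lead-in sentence --- ``Pillai simplified \refd{normalBorel} by proving that\dots'' --- fixes the meaning of ``normal in base $b$'' in \reft{Pillai} as Borel's definition, \refd{normalBorel}. Thus \reft{Pillai} asserts only: $x$ satisfies \refd{normalBorel} if and only if $x$ is simply normal in each base $b, b^2, b^3, \dots$. It says nothing directly about \refd{normal}, and indeed cannot, since otherwise \reft{NivenZuckerman} would be a one-line corollary and the paper would not describe it as an improvement.

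With this reading, both halves of your argument collapse. In the forward direction, from ``$x$ simply normal in all $b^k$'' \reft{Pillai} returns \refd{normalBorel}, not \refd{normal}, so you have gone in a circle. In the converse direction, you want to apply \reft{Pillai} to $b^j x$ to deduce simple normality in all $b^k$, but that deduction requires $b^j x$ to satisfy \refd{normalBorel} --- the very thing you are trying to establish; knowing $b^j x$ satisfies \refd{normal} is not enough to invoke \reft{Pillai}. The combinatorial fact you explicitly say you wish to avoid --- that under \refd{normal} every length-$m$ block has frequency $b^{-m}$ along each residue class of starting positions $\bmod\ m$ --- is \emph{not} encapsulated in Pillai's theorem; it is exactly the content of Niven and Zuckerman's theorem, as the quotation in the paper's appendix makes plain. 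Your shift lemma is correct and is genuinely a needed ingredient, but the passage from unconstrained block frequencies to residue-class-constrained ones must be proved by some combinatorial argument (Niven--Zuckerman's counting, Cassels's exponential-sum approach, or similar); neither \reft{Pillai} nor the shift lemma supplies it.
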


A simpler proof of \reft{Pillai} was given by J. E. Maxfield in \cite{MaxfieldPillai}.  J. W. S. Cassels gave a shorter proof of \reft{NivenZuckerman} in \cite{CasselsNZ}.
 It should be noted that both of these results require some work to establish, but were assumed without proof by several authors.  For example, M. W. Sierpinski assumed \reft{Pillai} in \cite{Sierpinski} without proof.  Moreover, D. G. Champernowne \cite{Champernowne}, A. H. Copeland and P. Erd\H os \cite{CopeErd}, and other authors took \refd{normal} as the definition of a normal number before it was proven that \refd{normal} and \refd{normalBorel} are equivalent.  More information can be found in Chapter 4 of the book of Y. Bugeaud \cite{BugeaudBook}.

The following theorem was proven by H. Furstenberg in his seminal paper ``Disjointness in Ergodic Theory, Minimal Sets, and a Problem in Diophantine Approximation'' \cite{FurstenbergDisjoint} on page 23 as an application of disjointness to stochastic sequences.
\begin{thrm}[H. Furstenberg]\labt{basebnormalii}
Suppose that $x=d_0.d_1d_2\cdots$ is the $b$-ary expansion of $x$.  Then $x$ is normal in base $b$ if and only if for all natural numbers $m$ and $r$ the real number $0.d_{r}d_{m+r}d_{2m+r}d_{3m+r}\cdots$ is normal in base $b$.
\end{thrm}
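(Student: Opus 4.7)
The plan is to handle the two directions separately, with all of the content living in the forward direction. The converse is immediate: taking $m=1$ and $r=0$, the hypothesized normality of $0.d_{0}d_{1}d_{2}\cdots = x$ (up to the harmless shift by the integer part) gives exactly the conclusion.

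For the forward direction, fix $m$ and $r$, and set $y:=0.d_{r}d_{m+r}d_{2m+r}\cdots$. Write $r=qm+s$ with $0\le s<m$, and group the base-$b$ digits of $x$ into consecutive blocks of length $m$ to obtain base-$b^{m}$ super-digits $D_{n}=(d_{n,1},\ldots,d_{n,m})$, with $d_{n,i}$ the base-$b$ digit of $x$ at position $m(n-1)+i$. Then $d_{mn+r}=d_{n+q,\,s+1}$, so the base-$b$ digit sequence of $y$ is, up to a finite shift by $q$ (which does not affect asymptotic densities), exactly the sequence of ``$(s+1)$-th slots'' of the base-$b^{m}$ expansion of $x$. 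This is the key structural observation that turns a question about subsequences of $\{d_{n}\}$ into a question about base-$b^{m}$ blocks.

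The next step is to upgrade normality in base $b$ to normality in base $b^{m}$. By \reft{Pillai} the hypothesis gives simple normality of $x$ in every base $b^{k}$, hence in every base $b^{mj}=(b^{m})^{j}$, and then by \reft{Pillai} applied in base $b^{m}$ this is equivalent to normality of $x$ in base $b^{m}$. Invoking \reft{NivenZuckerman}, every length-$\ell$ block of base-$b^{m}$ digits of $x$ therefore occurs with asymptotic density $b^{-m\ell}$.

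Finally, I would run a short block-counting argument. Fix a base-$b$ block $B=(a_{0},\ldots,a_{\ell-1})$. By the structural observation, the density with which $B$ appears in the base-$b$ expansion of $y$ equals the density with which $\ell$ consecutive base-$b^{m}$ super-digits of $x$ have $(s+1)$-th slots equal to $a_{0},\ldots,a_{\ell-1}$. Exactly $(b^{m-1})^{\ell}=b^{(m-1)\ell}$ of the $b^{m\ell}$ possible length-$\ell$ super-digit blocks meet this constraint, each contributing density $b^{-m\ell}$, for a total of $b^{-\ell}$. By \refd{normal} this is precisely normality of $y$ in base $b$. The only subtlety is careful bookkeeping in aligning the base-$b$ and base-$b^{m}$ pictures and tracking the initial shift by $q$; there is no deeper obstacle.
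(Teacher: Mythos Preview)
The paper does not supply its own proof of this theorem; it is quoted as a result of Furstenberg, proved in \cite{FurstenbergDisjoint} as an application of \emph{disjointness} of measure-preserving systems to stochastic sequences. Your argument is correct but proceeds along a genuinely different, purely combinatorial route: you pass from base $b$ to base $b^{m}$ via \reft{Pillai} and \reft{NivenZuckerman}, identify the digit sequence of $y$ with a fixed coordinate projection of the base-$b^{m}$ super-digit sequence of $x$, and then count blocks. Furstenberg's argument, by contrast, encodes normality as genericity for the shift with Lebesgue measure and deduces the result from the disjointness of the relevant systems; it is shorter once the machinery is in place and generalizes to other dynamical settings, whereas your approach is elementary, self-contained modulo the classical equivalences \reft{Pillai} and \reft{NivenZuckerman}, and makes the combinatorial mechanism completely explicit. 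One minor remark: your index identity $d_{mn+r}=d_{n+q,\,s+1}$ is off by a constant shift in $n$ (and needs a trivial adjustment when $s=0$), but as you note this does not affect asymptotic densities, so the argument goes through.
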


We will say that $x$ is \textbf{AP normal of type I in base $b$} if $x$ satisfies \refd{normalBorel} and \textbf{AP normal of type II in base $b$} if $x$ satisfies the notion introduced in \reft{basebnormalii}.  
It is interesting to note that although Furstenberg did not provide an alternate
proof of \reft{NivenZuckerman}, he did define an equivalent notion of normality, that is AP normality of type II. See appendix for more details.
Thus, for numbers expressed in base $b$
\begin{equation}\labeq{APequiv}
\hbox{normality }\Leftrightarrow \hbox{ AP normality of type I } \Leftrightarrow \hbox{ AP normality of type II}.
\end{equation}

The authors feel that the equivalence of \refd{normal} and \refd{normalBorel} and other similar ones is a far more delicate topic than is typically assumed.  
The core of E. Borel's definition is that a number is normal in base $b$ if blocks of digits occur with the desired relative frequency along all infinite arithmetic progressions.
We say that a real number $x$ is \textbf{$Q$-distribution normal} if $\OQx$ is u.d. mod 1.  A real number $x$ is \textbf{AP $Q$-distribution normal of type I} if for all $m \in \mathbb{N}$ and $0 \leq r < m$ we have that $\OQmrx$ is u.d. mod 1. If $x = E_0.E_1 E_2 \cdots \wrtQ$, then we say that $x$ is \textbf{AP $Q$-distribution normal of type II} if the real number $0.E_r E_{m+r} E_{2m+r} \cdots$ is $\{ q_{m(n-1)+r}\}_{n=1}^\infty$-distribution normal for all $m \in \mathbb{N}$ and $0 \leq r < m$. 
We say that $x$ is \textbf{AP $Q$-distribution abnormal} if $\OQmrx$ is not u.d. mod 1 for any $m>1$.



We will prove in \refs{OtherProofs} that $Q$-distribution normality is not equivalent to AP $Q$-distribution normality in a particularly strong way. 
The following theorem describes exactly how much \refeq{APequiv} may be extended to $Q$-Cantor series expansions when $Q$ is infinite in limit.

\begin{thrm}\labt{NormNotAP}
Let $Q$ be a basic sequence that is infinite in limit. Then 
\begin{enumerate}
\item AP $Q$-distribution normality of type I is equivalent to AP $Q$-distribution normality of type II.
\item The set of real numbers which are $Q$-distribution normal and AP $Q$-distribution abnormal is a meagre set with zero measure and full Hausdorff dimension.
\end{enumerate}
\end{thrm}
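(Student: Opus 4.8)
The plan is to derive part (1) purely formally from the definitions, and to derive part (2) from \refmt{Main} applied to a suitably chosen pair $(P,F)$.

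For part (1), I would unwind the two definitions. Fix $m\in\mathbb{N}$ and $0\le r<m$. The sequence $\OQmrx = \br{T_{Q,mn+r}(x)}_{n\ge 0}$ involves the tails $q_{mn+r}q_{mn+r-1}\cdots q_1\, x \bmod 1$, while AP-normality of type II concerns the $\br{q_{m(n-1)+r}}_n$-Cantor series expansion of $0.E_rE_{m+r}E_{2m+r}\cdots$, i.e.\ the sequence $\br{q_{mn+r}q_{m(n-1)+r}\cdots q_r \cdot 0.E_rE_{m+r}\cdots \bmod 1}$. The key observation is that for $j$ in the arithmetic progression $r,m+r,2m+r,\dots$, the digit $E_j(x)$ of the $Q$-expansion coincides with the corresponding digit of the sparsified expansion, and the difference between the two orbit sequences is controlled by the factor $q_{r-1}\cdots q_1$ together with a contribution from the digits $E_i(x)$ with $i\not\equiv r\pmod m$ lying below the current level. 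So I would show directly that $\OQmrx$ is u.d.\ mod $1$ if and only if the relevant sparsified orbit is u.d.\ mod $1$, using the Weyl-type criterion / the elementary fact that multiplying an equidistributed sequence by a fixed integer and adding an independent perturbation that itself becomes equidistributed (or is negligible) preserves the property. This is bookkeeping rather than a genuine obstacle; the content is that ``infinite in limit'' makes the leftover digit block $E_{m(n-1)+r+1}\cdots E_{mn+r-1}$ harmless.

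For part (2), note first that $Q$-distribution normality is exactly ``$\OQx$ is u.d.\ mod $1$'', which is the instance $p(X)=X$, $m=r=0$, $f_{0,0}(x)=x$ of the framework; and AP $Q$-distribution abnormality asks that $\OQmrx = \OQ{X, m, r}(x)$ fail to be u.d.\ mod $1$ for every $m>1$ (with arbitrary $r$), which we can force by prescribing upper and lower adfs $\overline f_{m,r},\underline f_{m,r}$ that are \emph{not} both equal to the identity. So I would take $P=\{X\}$ (which is trivially sparsely intersecting, a single polynomial), and build a single linear family $\br{f_{m,r}}$ with $f_{0,0}(x)=x$ but $f_{m,r}\ne \mathrm{id}$ for $m>1$, packaged as $F$ via \refeq{Fadf} with $\overline f = \underline f = f$. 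The one subtlety is the linear-family constraint $f_{m,r} = \frac1d\sum_{i=0}^{d-1} f_{md,mi+r}$, which ties together the level-$0$ value and the higher levels: since $f_{0,0}$ must be the identity and $f_{0,0} = \frac1d\sum_{i=0}^{d-1} f_{d,i}$, the $f_{d,i}$ must \emph{average} to the identity without each being the identity. A clean choice is a ``tent/sawtooth'' perturbation: e.g.\ let $f_{d,i}(x) = x + \frac{1}{2\pi d}\sin(2\pi x + 2\pi i/d)$ rescaled to remain a valid adf (non-decreasing, $f(0)=0$, $f(1)=1$) — the phases $2\pi i/d$ make $\sum_{i=0}^{d-1}$ of the perturbation vanish identically, so the averaging identity holds, yet no individual $f_{d,i}$ is the identity. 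Then \refmt{Main} gives $\dimh{\Phi_{Q,P,F}} = 1$, and $\Phi_{Q,P,F}$ consists of numbers that are $Q$-distribution normal (from the $m=0$ clause) and AP $Q$-distribution abnormal (from the $m>1$ clauses).

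It then remains to check that this set is meagre with zero Lebesgue measure. Zero measure is immediate: Lebesgue-a.e.\ $x$ has $\OQ{X,m,r}(x)$ u.d.\ mod $1$ for all $m,r$ when $Q$ is infinite in limit (this is the classical Erd\H os--R\'enyi type metric result, or follows from the ergodic/independence properties of Cantor series digits), so the AP-abnormal numbers already form a null set. Meagreness: the set of $x$ for which $\OQx$ is u.d.\ mod $1$ is meagre, since for a comeagre set of $x$ the orbit $\OQx$ is dense \emph{and} has every point of $[0,1]$ as a limit of $A_n([0,t),\cdot)/n$-values along suitable subsequences — more precisely, $\br{x : \OQx \text{ u.d.}}$ is a countable intersection of sets each of which, while dense, is not open-dense; the standard argument shows the u.d.\ condition defines a meagre set (one can exhibit, for each $x$ in a comeagre set, a subsequence of the Ces\`aro averages that oscillates). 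So the $Q$-distribution normal numbers are meagre, hence so is the smaller set in (2). The main obstacle I anticipate is not any single step but assembling the explicit adf family $\br{f_{m,r}}$ so that it \emph{simultaneously} (a) satisfies the exact linear-family identity, (b) has $f_{0,0}=\mathrm{id}$, (c) has $f_{m,r}\ne\mathrm{id}$ for every $m>1$ and every $r$, and (d) consists of genuine adfs; the sine-with-rotated-phases construction above is the natural candidate and I would verify these four points carefully, then feed it into \refmt{Main}.
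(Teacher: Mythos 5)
Your overall architecture matches the paper's: part (1) is a bookkeeping reduction (the paper does it in one line via \refl{SameADF}), and part (2) is obtained by feeding a nontrivial linear family into \refmt{Main} with $P=\{X\}$, plus standard meagreness and Weyl-type null-set remarks.

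However, the adf family you propose does not satisfy the linear-family identity, and this is the load-bearing step. Take $f_{m,r}(x)=x+c_m\sin(2\pi x+2\pi r/m)$ (or any variant with phases $2\pi i/d$ rotating around the circle). You correctly checked the identity at the bottom level, $f_{1,0}=\frac1d\sum_{i=0}^{d-1}f_{d,i}=\mathrm{id}$, since $\sum_{i=0}^{d-1}\sin(\theta+2\pi i/d)=0$. But the definition requires $f_{m,r}=\frac1d\sum_{i=0}^{d-1}f_{md,mi+r}$ for \emph{every} $m$, $r$, $d$. For $m>1$ and $d\ge2$ the phases $2\pi(mi+r)/(md)=2\pi i/d+2\pi r/(md)$ still step uniformly around the circle, so the same cancellation gives $\frac1d\sum_i f_{md,mi+r}=\mathrm{id}$, forcing $f_{m,r}=\mathrm{id}$ — exactly what you were trying to avoid. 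The constraint is rigid: any perturbation that cancels telescopically down to the identity at level $1$ must already cancel at every level $m$, so you cannot simultaneously have $f_{1,0}=\mathrm{id}$ and $f_{m,r}\neq\mathrm{id}$ with a ``rotating phase'' ansatz. (A side nit: the index set is $m\in\mathbb{N}$, $0\le r<m$, so the base case is $f_{1,0}$, not $f_{0,0}$.)

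The paper escapes this by building the family from sets rather than trigonometric perturbations. It constructs nested Borel sets $S_{m,r}\subset[0,1]$, via a residue-tree recursion with $\lcm$'s, satisfying $S_{m,r}=\bigsqcup_{i=0}^{d-1}S_{md,mi+r}$ with equal measures among the pieces of a fixed level; then it sets $f_{m,r}=f_{S_{m,r}}$, where $f_D$ (from \refl{adfclosed}) is the adf of the natural probability measure supported on $D$. The disjoint-union/equal-measure structure is precisely what makes $f_{m,r}=\frac1d\sum_i f_{md,mi+r}$ hold exactly, while $S_{1,0}=[0,1]$ forces $f_{1,0}=\mathrm{id}$ and $S_{m,r}\subsetneq[0,1]$ for $m>1$ forces $f_{m,r}\neq\mathrm{id}$. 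So your plan is right but your candidate family needs to be replaced by a genuinely self-similar (Moran/Cantor-type) one of this sort.
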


Furthermore, if $Q$ is computable and computably growing, then the proof of \reft{NormNotAP} provides a computable example of a real number that is $Q$-distribution normal and AP $Q$-distribution abnormal.  See \refs{constructionnormal} for further discussion.  However, \refmt{Main} is far stronger since it allows us to specify upper and lower adfs along polynomially indexed subsequences of $\OQx$. \reft{NormNotAP} only requires knowledge of $\OQx$ along infinite arithmetic progressions.

We note that far less is known if we  extend \refeq{APequiv} to analogous definitions involving digits.  The problem is discussed in \cite{ppq2} and partial results are given.  One substantial difference is that the analogous generalizations of the  definitions of AP normality of types I and II are no longer equivalent.  However, these definitions are technical, so we choose not to state any of these results here.



\subsection{Application II: Computing the Hausdorff dimension of Besicovitch--Eggleston normal sets}

 The following well known result was proved for $b=2$ by A. S. Besicovitch in \cite{BesicovitchEggleston} and for all other $b$ by H. Eggleston in \cite{Eggleston}.

\begin{thrm}[H. Eggleston]\labt{Eggleston}
Let $b \in \mathbb{N}_2$ and $\vec{p}=(p_0,p_1,\cdots,p_{b-1})$ be a probability vector.  Then the Hausdorff dimension of the set of all real numbers $x$ where the digit $i$ occurs in the $b$-ary expansion of $x$ with relative frequency $p_i$ for all $i=0,1,\cdots,b-1$ is equal to
$$
\frac {-\sum_{i=1}^{b-1} p_i\log p_i} {\log b}.
$$
\end{thrm}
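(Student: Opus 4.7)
The plan is to sandwich the Hausdorff dimension between matching lower and upper bounds, using a Bernoulli measure supported on the set for the lower bound and a direct counting argument on cylinders for the upper bound. Throughout, write $H(\vec{p}) := -\sum_{i=0}^{b-1} p_i \log p_i$ and let $B(\vec{p}) \subseteq [0,1)$ denote the Besicovitch--Eggleston set in the statement.

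For the lower bound, I would introduce the Bernoulli product measure $\mu = \mu_{\vec{p}}$ on $[0,1)$ under which the base-$b$ digits $E_n(x)$ are i.i.d.\ with $\mathbb{P}(E_n = i) = p_i$. The strong law of large numbers gives $\mu(B(\vec{p})) = 1$. The key computation is the local dimension of $\mu$ at a $\mu$-typical point: the length-$n$ cylinder $C_n(x)$ containing $x$ has $\mu$-mass $\prod_{i=0}^{b-1} p_i^{k_i(n,x)}$, where $k_i(n,x)$ counts occurrences of digit $i$ in the first $n$ positions of $x$. For $x \in B(\vec{p})$, $k_i(n,x)/n \to p_i$, so
\begin{equation*}
\lim_{n \to \infty} \frac{\log \mu(C_n(x))}{\log b^{-n}} = \frac{H(\vec{p})}{\log b}.
\end{equation*}
The Billingsley/mass distribution lemma then yields $\dim_{\mathrm{H}}(B(\vec{p})) \geq H(\vec{p})/\log b$.

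For the upper bound, fix $\epsilon > 0$ and let $\mathcal{B}_\epsilon^{(n)}$ be the collection of length-$n$ base-$b$ strings in which every digit $i$ appears between $(p_i - \epsilon)n$ and $(p_i + \epsilon)n$ times. Every $x \in B(\vec{p})$ eventually has its initial $n$-block inside $\mathcal{B}_\epsilon^{(n)}$, so for every $N$ the set $B(\vec{p})$ is covered by the length-$n$ cylinders indexed by $\mathcal{B}_\epsilon^{(n)}$ for some $n \geq N$. By a multinomial coefficient estimate and Stirling's formula, $|\mathcal{B}_\epsilon^{(n)}| \leq \exp\bigl(n(H(\vec{p}) + \eta(\epsilon))\bigr)$ with $\eta(\epsilon) \to 0$ as $\epsilon \to 0$. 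Since each cylinder has diameter $b^{-n}$, the $s$-dimensional Hausdorff premeasure satisfies
\begin{equation*}
\mathcal{H}^s_{b^{-N}}(B(\vec{p})) \leq \sum_{n \geq N} |\mathcal{B}_\epsilon^{(n)}| \, b^{-ns},
\end{equation*}
which tends to $0$ whenever $s > (H(\vec{p}) + \eta(\epsilon))/\log b$. Letting first $N \to \infty$ and then $\epsilon \to 0$ gives $\dim_{\mathrm{H}}(B(\vec{p})) \leq H(\vec{p})/\log b$.

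The main obstacle is the upper bound: one must control the combinatorial count $|\mathcal{B}_\epsilon^{(n)}|$ uniformly in $n$ and simultaneously relax the exact-frequency condition to a near-frequency condition compatible with a $\limsup$-style cover in the definition of Hausdorff measure. The lower bound, by contrast, reduces to the strong law of large numbers together with a standard mass distribution argument, and poses no real difficulty once the Bernoulli measure is in place.
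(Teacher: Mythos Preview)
The paper does not prove this theorem. It is quoted as a classical result, attributed to Besicovitch for $b=2$ and to Eggleston for general $b$, and is included only as historical background motivating the paper's own extension (\reft{extendcolebrook}) to Cantor series expansions. There is therefore no proof in the paper to compare against.

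That said, your outline is the standard modern proof and is correct. Two small remarks. First, in the upper bound the phrasing ``covered by the length-$n$ cylinders indexed by $\mathcal{B}_\epsilon^{(n)}$ for some $n \geq N$'' is ambiguous; what you actually use (and what your displayed sum over $n\geq N$ confirms) is the cover $\bigcup_{n\geq N}\{C_w : w\in\mathcal{B}_\epsilon^{(n)}\}$, since different points of $B(\vec{p})$ enter the typical set at different times. Second, for the lower bound you should note that balls are comparable to $b$-adic cylinders up to a bounded factor, so the local dimension along cylinders transfers to the local dimension along balls before invoking the mass distribution principle. Neither point is a genuine gap.
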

There have been numerous improvements of \reft{Eggleston}.  Moreover, \reft{Eggleston} has been extended to certain classes of Cantor series expansions.  Early work was done by J. Peyri\`{e}re in \cite{Peyriere} and Y. Kifer in \cite{Kifer}.  We mention a similar result  proven by Y. Xiong in \cite{Xiong}.  For a given basic sequence $Q$, let $N_n^Q(B,x)$ denote the number of times a block $B$ occurs starting at a position no greater than $n$ in the $Q$-Cantor series expansion of $x$.

\begin{thrm}[Y. Xiong]\labt{Xiong}
Suppose that $Q$ is infinite in limit and that $\vec{p}=(p_n)$ is an infinite probability vector.    For $m>0$, let
\begin{align*}
B_m(\vec{p})&=\left\{ x \in [0,1) : \lim_{n \to \infty} \frac {N_n^Q(k,x)} {n}=p_k,\hbox{ for }0 \leq k \leq m\right\};\\
B(\vec{p})&=\left\{ x \in [0,1) : \lim_{n \to \infty} \frac {N_n^Q(k,x)} {n}=p_k,\hbox{ for } k \geq 0\right\}.
\end{align*}
Then the following hold.
\begin{enumerate}
\item If $\lim_{n \to \infty} \frac {\log q_n} {\sum_{j=1}^n \log q_j}=0$, then
$$
\dimh{B_m(\vec{p})}=\sup_{t \in T_m} \liminf_{n \to \infty} \frac {\sum_{j=1}^n t_j \log q_j} {\sum_{j=1}^n \log q_j},
$$
where 
$$T_m=\left\{t \in \{0,1\}^{\mathbb{N}}: \frac {\sum_{j=1}^n t_j} {n}=\sum_{j=m}^\infty p_j\right\}.
$$
\item If $Q$ is increasing and the sequence $\left\{\frac {\log q_n} {\sum_{j=1}^n \log q_j}\right\}$ is bounded,  then $$\dimh{B(\vec{p})}=~0.$$
\end{enumerate}
\end{thrm}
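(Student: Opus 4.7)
The plan is to prove part (1) via matching upper and lower bounds on $\dimh{B_m(\vec{p})}$ using a variable-base adaptation of the Besicovitch--Eggleston strategy, and to prove part (2) by a truncation argument that exploits the fact that \emph{all} digit frequencies are prescribed. The geometric input is that level-$n$ cylinders of the $Q$-Cantor series expansion have length $(q_1 \cdots q_n)^{-1}$, so the Hausdorff dimension of a set $A \subseteq [0,1)$ is controlled by $\log \#\{\text{level-}n\text{ cylinders meeting }A\}/\sum_{j \leq n}\log q_j$, and the frequency conditions defining $B_m(\vec{p})$ translate into counting constraints on admissible digit strings.

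Write $\pi := \sum_{j \geq m}p_j$. For the upper bound in part (1), I decompose $B_m(\vec{p}) = \bigcup_{t \in T_m} A_t$, where $A_t := \{x \in B_m(\vec{p}) : t_j = 1 \Leftrightarrow E_j(x) \geq m\}$. For each fixed $t \in T_m$, the set $A_t$ is covered at level $n$ by at most $\exp(O(n))\prod_{j \leq n,\ t_j = 1}(q_j - m)$ cylinders, the $O(n)$ absorbing the multinomial for the small-digit arrangement. The hypothesis $\log q_n/\sum_{j \leq n}\log q_j \to 0$ forces $n = o(\sum_{j \leq n}\log q_j)$, making the $O(n)$ term negligible and yielding
\begin{equation*}
\dimh{A_t} \leq \liminf_{n \to \infty}\frac{\sum_{j \leq n,\ t_j = 1}\log q_j}{\sum_{j \leq n}\log q_j}.
\end{equation*}
Passing from the individual $A_t$'s to $B_m(\vec{p})$ via a countable approximation of $T_m$ and the countable stability of Hausdorff dimension then gives the upper bound $\sup_{t \in T_m}\liminf$.

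For the lower bound, I fix $t \in T_m$ nearly achieving the supremum and construct a product measure $\mu_t$ by making the digits independent: at $t_j = 0$ positions, $E_j = k$ with probability $p_k/(1-\pi)$ for $0 \leq k < m$; at $t_j = 1$ positions, $E_j$ is uniform on $\{m, \ldots, q_j - 1\}$. The strong law of large numbers places $\mu_t$-almost every point in $B_m(\vec{p})$. For the level-$n$ cylinder $C_n(x)$ through a $\mu_t$-typical $x$, a direct calculation combined with the strong law yields
\begin{equation*}
-\log\mu_t(C_n(x)) = \sum_{j \leq n,\ t_j = 1}\log q_j + O(n),
\end{equation*}
and dividing by $\log|C_n(x)|^{-1} = \sum_{j \leq n}\log q_j$, the local dimension of $\mu_t$ equals the prescribed liminf; the mass distribution principle combined with a supremum over $t \in T_m$ furnishes the matching lower bound.

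For part (2), set $\pi_K = \sum_{k \geq K}p_k$; since $\sum_k p_k = 1$, $\pi_K \to 0$ as $K \to \infty$. Any $x \in B(\vec{p})$ has approximately $\pi_K n$ of its first $n$ digits exceeding $K$. Because $Q$ is increasing, $\prod_{j \in I}(q_j - K)$ over $|I| = \pi_K n$ is maximized by the last $\pi_K n$ indices and bounded by $q_n^{\pi_K n}$; combined with $K^n$ choices at the remaining positions and the bound $n = O(\sum_{j \leq n}\log q_j)$ from $Q$ increasing, the cylinder count together with the hypothesis that $\log q_n/\sum_{j \leq n}\log q_j$ is bounded by some $M$ give $\dimh{B(\vec{p})} \leq \pi_K M + O(1/\log q_1)$. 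Letting $K \to \infty$ forces $\dimh{B(\vec{p})} = 0$. The main obstacle is the countable approximation step in part (1)'s upper bound: since $T_m$ is uncountable but Hausdorff dimension enjoys only countable stability, one must extract a countable subfamily of $T_m$ whose liminfs are dense in the range of $t \mapsto \liminf_n \frac{\sum_{j \leq n,\ t_j = 1}\log q_j}{\sum_{j \leq n}\log q_j}$, which requires a careful compactness/semicontinuity argument in $\{0,1\}^{\mathbb{N}}$.
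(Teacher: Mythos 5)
You should first be aware that \reft{Xiong} is quoted from Y.~Xiong's paper \cite{Xiong} as background for Application II; the present paper contains no proof of it, so your argument has to stand on its own. In part (1) your upper bound decomposes $B_m(\vec{p})$ into the uncountably many sets $A_t$, $t \in T_m$. Hausdorff dimension is only countably stable, so bounding each $A_t$ gives no bound on the union, and the repair you propose --- a countable subfamily of $T_m$ whose liminfs are dense in the range of $t \mapsto \liminf_n \frac{\sum_{j \le n} t_j \log q_j}{\sum_{j\le n}\log q_j}$ --- does not help, because the union of the corresponding $A_t$ still omits almost all of $B_m(\vec{p})$; you have misidentified the obstacle. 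The workable route is to cover the countably many sets of points whose empirical digit counts are within $\epsilon n$ of the targets for all $n \ge N$; but then the level-$n$ cylinder count involves a maximum of $\prod_{j \in S} q_j$ over all admissible position sets $S \subseteq \{1,\dots,n\}$ with $\#S \approx \pi n$, and the real work is to show that $\liminf_n$ of this maximized ratio does not exceed $\sup_{t \in T_m}\liminf_n$, i.e.\ that near-optimal finite sets at different scales can be spliced into a single $t$ of the right density that is good at \emph{all} large scales. That exchange is where the hypothesis $\log q_n/\sum_{j\le n}\log q_j \to 0$ has to be used, and it is absent from your sketch. In the lower bound the same hypothesis is needed for a second reason you do not address: cylinders of levels $n$ and $n+1$ differ in length by the factor $q_{n+1}$, so passing from cylinder-local dimension to a bound for balls costs a term of order $\log q_{n+1}/\sum_{j\le n}\log q_j$ (compare the $c_{k+1}$, $n_{k+1}$ terms in the Feng--Wen--Wu bound quoted in this paper). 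Also, as specified your measure gives digit $m$ frequency $0$, which conflicts with the displayed definition of $B_m(\vec{p})$ (the quoted statement itself has an off-by-one tension between $B_m$ and $T_m$, but a correct proof must fix a consistent convention).

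Part (2) contains an actual error. Your count gives, for the positions carrying digits at least $K$, a contribution of up to $\sum_{j=n-\lceil \pi_K n\rceil}^{n}\log q_j$ to the logarithm of the cylinder count, and the hypothesis only controls each single term: $\log q_j \le M \sum_{i\le j}\log q_i$ (indeed this ratio is trivially at most $1$). It does not say that the last $\lceil\pi_K n\rceil$ terms form a $\pi_K M$-fraction of $\sum_{j\le n}\log q_j$; that would require $\log q_n$ to be comparable to the \emph{average} $\frac{1}{n}\sum_{j\le n}\log q_j$, a much stronger statement. Concretely, for $q_j = 2^{2^j}$ (increasing, infinite in limit, $\log q_n/\sum_{j\le n}\log q_j \to 1/2$) the last $\lceil\pi_K n\rceil$ indices carry almost all of $\sum_{j\le n}\log q_j$ at every $n$, so your covering bound yields a number close to $1$, not close to $\pi_K M$, and letting $K \to \infty$ gives nothing. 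Establishing part (2) therefore needs a genuinely different idea --- for instance coverings adapted to where the large digits of a given point actually sit, organized into a countable decomposition with scales chosen per piece --- and this is precisely the part of Xiong's theorem that your truncation argument does not reach.
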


C. M. Colebrook \cite{Colebrook} proved a similar result to \reft{Eggleston} about the Hausdorff dimension of the set of real numbers $x$ where the sequence $\Obx$ has a given adf. A special case of \refmt{Main}  extends C. M. Colebrook's result to a  large class of Cantor series expansions.

\begin{thrm}\labt{extendcolebrook}
Suppose that $Q$ is infinite in limit and that $f$ is an adf.  Then the set of real numbers $x$ such that $\OQx$ has adf of $f$ has full Hausdorff dimension.
\end{thrm}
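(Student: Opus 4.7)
The plan is to derive \reft{extendcolebrook} as an immediate specialization of \refmt{Main}. I would take $P = \{p_0\}$, where $p_0(X) = X$ is the identity polynomial; as a singleton, $P$ satisfies the sparsely intersecting condition trivially, there being no pair of distinct polynomials to compare. For the required linear families of upper and lower asymptotic distribution functions, I would assign the constant family
\begin{equation*}
\overline{f}_{p_0, m, r} = \underline{f}_{p_0, m, r} = f
\end{equation*}
for every $m \in \mathbb{N}$ and $0 \le r < m$. The linear family relation $f_{m,r} = d^{-1}\sum_{i=0}^{d-1} f_{md, mi+r}$ is then satisfied trivially, since both sides reduce to $f$.

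With these choices \refmt{Main} applies and yields $\dimh{\Phi_{Q,P,F}} = 1$. By construction, every $x \in \Phi_{Q,P,F}$ has $\OQfmrx{p_0}$ possessing both upper and lower adf equal to $f$, hence adf $f$, for every $m$ and $r$. Specializing to $m = 1$ and $r = 0$ gives $\OQfmrx{p_0} = \OQx$, so $\OQx$ has adf $f$. Therefore
\begin{equation*}
\Phi_{Q,P,F} \subseteq \{x \in [0,1) : \OQx \text{ has adf } f\},
\end{equation*}
and since the right-hand side lies in $[0,1)$ (so its Hausdorff dimension is at most $1$) while containing a set of Hausdorff dimension $1$, it must have full Hausdorff dimension.

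The only point requiring attention will be the verification that the hypotheses of \refmt{Main} are met with these trivial data, which amounts to the two observations recorded above; no delicate construction is needed. In this sense no substantive obstacle arises here: \reft{extendcolebrook} is essentially the $(m, r, p) = (1, 0, \mathrm{id})$ slice of \refmt{Main}'s conclusion.
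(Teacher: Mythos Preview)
Your proposal is correct and is essentially the same as the paper's approach: the paper presents \reft{extendcolebrook} explicitly as a special case of \refmt{Main}, and the specialization you spell out (take $P=\{X\}$ and set every $\overline{f}_{p,m,r}=\underline{f}_{p,m,r}=f$) is exactly the one the paper has in mind, as confirmed by the analogous setups in \reft{computableElement1} and \reft{computableElement2}.
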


We note that the sets considered in \reft{Xiong} have much smaller Hausdorff dimension than those considered in \reft{extendcolebrook}.  This is in sharp contrast to the case of the $b$-ary expansions.

\subsection{Application III: Analyzing the Hausdorff dimension of sets of numbers which may not have digital frequencies}

It is difficult in general to analyze sets of real numbers some of whose digital frequencies  may not exist. This is discussed in L. Olsen's paper \cite{OlsenDigits}.  For an integer $b \geq 2$, let $N_n^b(B,x)$ denote the number of times a block $B$ occurs starting at a position no greater than $n$ in the $b$-ary expansion of $x$ and set 
$$
\Pi_b(B,x):=\lim_{n \to \infty} \frac {N_n^b(B,x)} {n}.
$$
\begin{thrm}[L. Olsen]\labt{Olsen}
For $k>0$ and $b \in \mathbb{N}_2$, put
\begin{align*}
&\mathscr{A}=\br{x \in [0,1] : \Pi_b(1,x)=k\Pi_b(0,x)};\\
&\mathscr{B}=\br{x \in [0,1] : \Pi_2(000,x)=\Pi_2(11,x)};\\
&\mathscr{C}=\br{x \in [0,1] : \Pi_3(1,x)^2=\Pi_3(0,x)}.
\end{align*}
Then
\begin{align*}
&\dimh{\mathscr{A}}=\frac {\log \pr{(k+1)/k^{k/(k+1)}+b-2}}{\log b};\\
&\dimh{\mathscr{B}}=0.688593\cdots;\\
&\dimh{\mathscr{C}}=0.9572506922\cdots.
\end{align*}
\end{thrm}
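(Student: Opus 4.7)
The overall strategy is a Besicovitch--Eggleston stratification combined with entropy maximization. Stratify each of $\mathscr{A}$, $\mathscr{B}$, $\mathscr{C}$ by the exact block-frequency vector $\pi$ (of length $1$ for $\mathscr{A}$ and $\mathscr{C}$; involving both length-$2$ and length-$3$ block marginals for $\mathscr{B}$), writing the set as a union over all $\pi$ satisfying the relation. A block-length generalization of \reft{Eggleston} (proved by matching cylinder-cover upper bounds against a Markov-measure construction of order $\ell-1$ for the lower bound) gives each stratum Hausdorff dimension $h(\mu_\pi)/\log b$, where $\mu_\pi$ is the unique shift-invariant Markov measure of the appropriate order realizing $\pi$. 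Consequently
\begin{equation*}
\dimh{\mathscr{A}} = \sup_{\vec{p}} \frac{-\sum p_i \log p_i}{\log b}, \quad \dimh{\mathscr{B}} = \sup_{\pi} \frac{h(\mu_\pi)}{\log 2}, \quad \dimh{\mathscr{C}} = \sup_{\vec{p}} \frac{-\sum p_i \log p_i}{\log 3},
\end{equation*}
where each supremum is taken over those (block-)frequency vectors satisfying the relevant constraint.

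For $\mathscr{A}$ the constraint $p_1 = kp_0$ is linear, and by symmetry in indices $2,\ldots,b-1$ the maximizer has $p_2=\cdots=p_{b-1}=r$. Setting $p_0=c$, $p_1=kc$, introducing Lagrange multipliers for the normalization and the linear relation, and solving the stationary system forces $r = c\,k^{k/(k+1)}$; the normalization $(1+k)c + (b-2)r = 1$ then gives $r^{-1} = (k+1)/k^{k/(k+1)} + b - 2$, and a direct expansion using $p_1 = kp_0$ shows $-p_0\log p_0 - p_1\log p_1 = -(p_0+p_1)\log r$, so $H(\vec{p}) = -\log r$, producing the stated closed form. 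For $\mathscr{C}$ the relation $p_0 = p_1^2$ leaves the one-parameter family $p_0 = t^2$, $p_1 = t$, $p_2 = 1-t-t^2$ with $t \in \pr{0,(\sqrt{5}-1)/2}$; differentiating $H(t)$ and simplifying yields the transcendental equation $(4t+1)\log t = (1+2t)\log(1-t-t^2)$, whose unique interior root, when substituted back into $H$ and divided by $\log 3$, produces the stated decimal.

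For $\mathscr{B}$ the constraint couples a length-$3$ block frequency with a length-$2$ block frequency, so by the maximum entropy principle the optimum is a second-order Markov measure on $\br{0,1}^{\mathbb{N}}$. Parametrize by the four transition probabilities $P(c \mid a,b)$, obtain the stationary state distribution $\pi_{ab}$ as the Perron eigenvector of the associated $4 \times 4$ transition matrix (so that the length-$2$ marginal $\Pi_2(11,x) = \pi_{11}$ is determined algebraically from the parameters), and maximize $h(\mu) = -\sum_{a,b,c} \pi_{ab}\, P(c\mid a,b)\,\log P(c\mid a,b)$ subject to $\pi_{00}\,P(0\mid 0,0) = \pi_{11}$. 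The resulting Lagrange system has no closed-form solution; numerical root-finding yields the stated value $0.688593\ldots$ after dividing by $\log 2$. The main obstacle is not the variational calculus itself but the two supporting ingredients on which the reduction rests: the block-length upgrade of \reft{Eggleston} identifying the dimension of each Besicovitch stratum, and the justification that maximum entropy among shift-invariant measures with prescribed length-$\ell$ block frequencies is attained uniquely by the Markov measure of order $\ell-1$, which follows from strict concavity of entropy together with the conditional-entropy decomposition for shift-invariant measures.
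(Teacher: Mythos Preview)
This theorem is not proved in the present paper at all: it is quoted as a result of L.~Olsen, with an explicit citation to \cite{OlsenDigits}, and the surrounding text even remarks that the precise descriptions of $\dimh{\mathscr{B}}$ and $\dimh{\mathscr{C}}$ are ``intentionally omit[ted]'' because they are complex. The paper invokes \reft{Olsen} only as motivation for its own Cantor-series analogue, \reft{OlsenGen}, which is the result actually established here. So there is no ``paper's own proof'' of \reft{Olsen} against which to compare your attempt.

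That said, your sketch is broadly in the spirit of Olsen's original argument: stratify by (block) frequency vectors, compute the dimension of each stratum via a Besicovitch--Eggleston/Markov-measure result, and optimize entropy over the constraint surface. Two caveats are worth flagging. First, the supremum over strata only gives a lower bound for the Hausdorff dimension of the union; the matching upper bound is not automatic, and in Olsen's framework it comes from his multifractal machinery (deformed empirical measures and large-deviation estimates) rather than from the naive covering you allude to. Second, and more substantively, Olsen's sets are defined so as to include divergence points---real numbers $x$ for which the individual frequencies $\Pi_b(i,x)$ need not exist, yet the relation between them holds in an appropriate limiting sense---and handling those points is precisely the content of \cite{OlsenDigits}. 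Your stratification by \emph{exact} frequency vectors tacitly restricts to the convergence locus, which suffices for the lower bound but does not by itself address the full set. None of this affects the present paper, however, since \reft{Olsen} is imported wholesale and not re-proved.
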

We note that L. Olsen provides a precise description of the Hausdorff dimension of $\mathscr{B}$ and $\mathscr{C}$.  We intentionally omit these descriptions here and only provide an approximation as they are relatively complex. We will prove the following extension of \reft{Olsen}  in \refs{OtherProofs}.
\begin{thrm}\labt{OlsenGen}
Suppose that $I$ and $J$ are disjoint non-degenerate intervals 
and that $f$ and $g$ are real valued functions on $[0,1]$ which satisfy the following conditions:
\begin{itemize}
\item There is an $x_0$ in $[0,1/2]$ such that $f(x_0) = g(x_0)$;
\item We have that  $\lambda(I) + \lambda(J) < 1$.

\end{itemize}
If $Q$ is infinite in limit, then 
$$
\dimh {\br{ x \in [0,1) : f\pr{\lim_{ n \to \infty} \frac{A_n ( I, \OQx)}{n}  }= g\pr{\lim_{ n \to \infty} \frac{A_n(J, \OQx)}{n}} } } = 1.
$$
\end{thrm}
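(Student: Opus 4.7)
The plan is to derive \reft{OlsenGen} as a corollary of the Main Theorem by producing a single continuous asymptotic distribution function $h$ for which $\OQx$ is forced to have the same asymptotic frequency $x_0$ in both $I$ and $J$; the hypothesis $f(x_0)=g(x_0)$ then immediately supplies the desired equality.

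To construct $h$, I would observe that since $I$ and $J$ are disjoint and $\lambda(I)+\lambda(J)<1$, the set $K=[0,1]\setminus(I\cup J)$ has Lebesgue measure $L=1-\lambda(I)-\lambda(J)>0$, and since $x_0\in[0,1/2]$ we have $1-2x_0\ge 0$. Take $h:[0,1]\to[0,1]$ to be the continuous, piecewise linear, non-decreasing function with slope $x_0/\lambda(I)$ on $I$, slope $x_0/\lambda(J)$ on $J$, and slope $(1-2x_0)/L$ on each connected component of $K$, anchored by $h(0)=0$. One verifies $h(1)=2x_0+(1-2x_0)=1$, and writing $I$ and $J$ as intervals with endpoints $a_i<b_i$, one obtains $h(b_i)-h(a_i)=x_0$ for $i=1,2$. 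Hence $h$ is a continuous adf with $h$-mass equal to $x_0$ on both $I$ and $J$.

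Next I would apply the Main Theorem with $P=\{p\}$, $p(X)=X$, which is trivially sparsely intersecting as the defining condition concerns distinct pairs, together with the linear family
\[ \overline{f}_{p,m,r}=\underline{f}_{p,m,r}=h \quad \text{for all } m\in\mathbb{N},\ 0\le r<m, \]
which is a linear family because $h=\frac{1}{d}\sum_{i=0}^{d-1}h$ for every $d$. The Main Theorem then yields $\dimh{\Phi_{Q,P,F}}=1$. For any $x\in\Phi_{Q,P,F}$, specializing to $m=1$ and $r=0$ gives that $\OQx=\OQfmrx{p}$ has adf $h$, and the continuity of $h$ lets one transfer the defining limits on $[0,x)$ to limits on arbitrary subintervals, giving $A_n(E,\OQx)/n\to h(\sup E)-h(\inf E)$ for any interval $E\subseteq[0,1]$. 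Applying this to $E=I$ and $E=J$ gives
\[ f\!\left(\lim_{n\to\infty}\frac{A_n(I,\OQx)}{n}\right)=f(x_0)=g(x_0)=g\!\left(\lim_{n\to\infty}\frac{A_n(J,\OQx)}{n}\right), \]
so $\Phi_{Q,P,F}$ is contained in the set appearing in the conclusion of \reft{OlsenGen}, which therefore has full Hausdorff dimension.

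The main obstacle is concentrated in the construction of $h$: the quantitative hypotheses $x_0\le 1/2$ and $\lambda(I)+\lambda(J)<1$ are precisely what is needed to keep all three slopes non-negative and the total mass equal to one, so that an adf with simultaneous $I$-mass and $J$-mass both equal to $x_0$ actually exists. Once such an $h$ is exhibited, the remainder of the argument is an essentially automatic invocation of the Main Theorem.
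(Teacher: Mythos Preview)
Your proof is correct and follows essentially the same approach as the paper: construct an adf placing mass exactly $x_0$ on each of $I$ and $J$ (and the remaining $1-2x_0$ elsewhere), then invoke the Main Theorem with $P=\{X\}$ and this single adf to obtain a full-dimensional set on which both limiting frequencies equal $x_0$. The only cosmetic differences are that the paper writes the adf as $F(x)=\int_0^x \phi(t)\,dt$ for a piecewise-constant density $\phi$ supported on $I\cup J\cup K$ with $K$ a single interval in the complement, whereas you describe the same object directly as a piecewise-linear function and spread the leftover mass over the entire complement; neither change affects the argument.
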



S. Albeverio, M. Pratsiovytyi, and G. Torbin proved the following theorem in \cite{topfracnotnorm} 

\begin{thrm}[S. Albeverio, M. Pratsiovytyi, and G. Torbin]\labt{noFrequencyAPT}
The set of real numbers whose frequencies of digits in base $b$ do not exist has zero measure and full Hausdorff dimension.
\end{thrm}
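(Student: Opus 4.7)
The zero Lebesgue measure assertion is immediate from \'{E}.~Borel's normal number theorem: every normal real in base $b$ has each digit frequency equal to $1/b$, so the set $\mathcal{F}$ of reals whose digit frequencies fail to exist is contained in the null set of non-normal numbers. For the full Hausdorff dimension assertion, the plan is to exhibit, for each $\eta \in (0,1)$, a subset $S_\eta \subseteq \mathcal{F}$ with $\dim_H S_\eta \geq 1 - \eta$; the conclusion $\dim_H \mathcal{F} = 1$ then follows by taking $\eta \to 0$ along a countable sequence.

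Fix $\eta > 0$ and an extremely rapidly increasing sequence $1 = N_0 < N_1 < N_2 < \cdots$ with $N_{k-1}/N_k \to 0$, for instance $N_k = 2^{2^k}$. I define a Cantor-like set $\widetilde{S}_\eta$ of $x = 0.d_1 d_2 \cdots$ (in base $b$) by the rule: within each block $(N_{k-1}, N_k]$ of odd index $k$, the positions at offsets $\lceil 1/\eta \rceil, 2\lceil 1/\eta \rceil, \ldots$ from $N_{k-1}$ are forced to equal the digit $0$, while all remaining positions (in every block) are free to take any value in $\{0, 1, \ldots, b-1\}$. Let $\mu$ be the natural product measure on $\widetilde{S}_\eta$ assigning weight $1/b$ to each choice at each free position.

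Two claims complete the argument. First, $\dim_H \widetilde{S}_\eta \geq 1 - \eta$ by the mass distribution principle: the cylinder of length $n$ through $x \in \widetilde{S}_\eta$ has $\mu$-measure $b^{-(n - F(n))}$, where $F(n)$ counts the forced positions up to $n$, and the arithmetic spacing of the forced positions together with the rapid growth of $(N_k)$ guarantees $F(n)/n \leq \eta + o(1)$ uniformly in $n$. Second, for $\mu$-almost every $x$, the strong law of large numbers applied to the independent uniform free digits gives that the empirical frequency of the digit $0$ among $d_1(x), \ldots, d_{N_k}(x)$ equals $F(N_k)/N_k + (1 - F(N_k)/N_k)/b + o(1)$; since $F(N_k)/N_k \to \eta$ along odd $k$ while $F(N_k)/N_k \to 0$ along even $k$ (the cumulative forced contribution being dominated by the previous odd block, of size $o(N_k)$), the limit values $\eta + (1-\eta)/b$ and $1/b$ are distinct, so the frequency fails to exist. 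Taking $S_\eta$ to be the full-$\mu$-measure subset of $\widetilde{S}_\eta$ on which both convergences hold, a second application of the mass distribution principle preserves $\dim_H S_\eta \geq 1 - \eta$.

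The main technical hurdle is ensuring $F(n)/n \leq \eta + o(1)$ for \emph{all} large $n$, not only at the endpoints $n = N_k$, since a local spike of forced density on intermediate scales would collapse the Hausdorff dimension estimate. Spreading the forced $0$s arithmetically within each odd block, rather than clustering them at its start, together with the super-exponential growth of $(N_k)$ that makes the contribution from previous odd blocks negligible, resolves this pitfall simultaneously with the requirement that the density oscillates between distinct limits.
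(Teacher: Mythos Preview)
The paper does not prove this theorem; it is quoted as a result of Albeverio, Pratsiovytyi, and Torbin and serves only as motivation for the Cantor-series analogue \reft{noFrequency}. So there is no proof in the paper to compare against directly.

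Your argument is correct. One small imprecision: with spacing $m=\lceil 1/\eta\rceil$ the forced density in odd blocks is $1/m$, not $\eta$, so $F(N_k)/N_k\to 1/m$ along odd $k$ rather than to $\eta$; this is harmless, since $1/m\le\eta$ only strengthens the dimension bound, and $1/m>0$ is all that is needed to make the two subsequential frequencies $1/m+(1-1/m)/b$ and $1/b$ distinct.

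For context, the paper's own method for the analogous \reft{noFrequency} is completely different and much less hands-on: it applies \refmt{Main} with $\overline{f}_{p,m,r}(x)=x$ and $\underline{f}_{p,m,r}(x)=x^2$, obtaining in one line a full-dimensional set on which $\OQx$ has unequal upper and lower adfs and hence no adf. That route requires the heavy Moran-set and discrepancy machinery built earlier in the paper and, crucially, needs $Q$ infinite in limit, so it does \emph{not} specialise to the constant-base statement you are proving. Your elementary construction is therefore genuinely independent of the paper's techniques; what it gains is self-containment for the base-$b$ case, while what the paper's approach gains is a uniform treatment of all infinite-in-limit $Q$ together with control along every arithmetic and polynomial subsequence simultaneously.
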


We extend their result to the following theorem.

\begin{thrm}\labt{noFrequency}
If $Q$ is infinite in limit, then the set of real numbers $x$ such that $\OQx$ has no adf has zero measure and full Hausdorff dimension.
\end{thrm}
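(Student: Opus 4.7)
The plan is to treat the two claims separately.

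For the zero-measure part, I would note that since $q_n \geq 2$ (from $Q \in \NN$) and $q_n \to \infty$ (from $Q$ infinite in limit), the ratios $Q_{n+1}/Q_n = q_{n+1}$ grow without bound, so in particular $\{Q_n\} = \{q_1 \cdots q_n\}$ is Hadamard lacunary. By the classical Koksma theorem on lacunary sequences, for Lebesgue almost every $x \in [0,1)$ the sequence $\OQx = \{Q_n x \bmod 1\}$ is u.d. mod $1$ and hence has the identity as its adf. Consequently, the set on which $\OQx$ has no adf is Lebesgue null.

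For the full Hausdorff dimension, the plan is to invoke \refmt{Main} with a specific choice of $P$ and $F$ that forces the upper and lower adfs of $\OQx$ itself to disagree. I would take $P = \{p_1, p_2\}$ with $p_1(X) = X$ and $p_2(X) = X^2$. Since the squares are contained in the integers, a direct computation gives $\de(p_1, p_2, s, n) \equiv 1$, whereas $\de(p_2, p_1, s, n) = O(1/\sqrt{n})$ tends to $0$ as $n - s \to \infty$; hence $p_2 \lnsim p_1$ and $P$ is sparsely intersecting. Next I would define, for each $p \in P$, $m \in \mathbb{N}$, and $0 \leq r < m$,
\[
\overline{f}_{p,m,r}(t) := t, \qquad \underline{f}_{p,m,r}(t) := t^2.
\]
Both are adfs with $\underline{f} \leq \overline{f}$ and strict inequality on $(0,1)$, and the linear-family identity $f_{m,r} = \tfrac{1}{d}\sum_{i=0}^{d-1} f_{md, mi+r}$ is automatic because the functions do not depend on $(m,r)$. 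Let $F$ be the resulting collection in the form \refeq{Fadf}.

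Then \refmt{Main} yields $\dimh{\Phi_{Q,P,F}} = 1$. For any $x \in \Phi_{Q,P,F}$, specializing the defining property to $p = p_1$ and $(m,r) = (1,0)$ gives $\OQfmrx{p_1} = \OQx$ with upper adf $t$ and lower adf $t^2$; these differ on $(0,1)$, so $\OQx$ has no adf. Thus $\Phi_{Q,P,F}$ is contained in the set of $x$ for which $\OQx$ has no adf, and that set therefore has full Hausdorff dimension. The only real work is the verification of the hypotheses of \refmt{Main} for the chosen $P$ and $F$; all the heavy lifting is already absorbed into that theorem.
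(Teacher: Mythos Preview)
Your proposal is correct and follows essentially the same route as the paper: apply \refmt{Main} with $\overline{f}_{p,m,r}(t)=t$ and $\underline{f}_{p,m,r}(t)=t^2$ (constant in $(m,r)$, hence trivially a linear family) to get a full-dimension set on which the upper and lower adfs of $\OQx$ disagree, and handle the measure-zero claim by the classical fact that $\{q_1\cdots q_n x\}$ is u.d.\ mod~$1$ for a.e.\ $x$ (the paper attributes this to Weyl rather than Koksma, but the substance is the same). The only cosmetic difference is that you take $P=\{X,X^2\}$ and verify sparse intersection explicitly, whereas the paper simply fixes any sparsely intersecting $P$ containing the identity; in fact $P=\{X\}$ would already suffice and spares you the check.
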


\reft{noFrequency} is proven in \refs{OtherProofs}.

\subsection{Application IV Constructing examples of normal numbers}\labs{constructionnormal}

The most well known construction of a normal number in base $10$ is due to Champernowne. The number
\begin{equation*}
0.1 \ 2 \ 3 \ 4 \ 5 \ 6 \ 7 \ 8 \ 9 \ 10 \ 11 \ 12 \cdots,
\end{equation*}
formed by concatenating the digits of every natural number written in increasing order in base 10, is normal in base 10. The number formed by concatenating the digits of the natural numbers in base $b$ in order is normal in base $b$.
A. H. Copeland and P. Erd\"{o}s in \cite{CopeErd} showed that the number 
$$
0.2\ 3\ 5\ 7\ 11\ 13\ 17\ 19\ 23\ 29\cdots,
$$
formed by concatenating the digits of all prime numbers is normal in base $b$. H. Davenport and P. Erd\"{o}s in \cite{DavenErd} showed that the number formed by concatenating the value of a positive integer valued polynomial at each natural number yields a normal number in base $b$. Many similar and more sophisticated results have been proven since then. For example,  J. Vandehey \cite{VandeheyAdditive} and M. Madritsch and R. Tichy \cite{MaTichy} have given similar constructions.
A more extensive list of results can be found in Y. Bugeaud's book \cite{BugeaudBook}. 

A real number is \textbf{absolutely normal} if it is normal in base $b$ for all $b \in \mathbb{N}_2$.
M. W. Sierpi\'{n}ski gave an example of an absolutely normal number that is not computable in \cite{Sierpinski}.
The authors feel that examples such as M. W. Sierpi\'{n}ski's are not fully explicit since they are not computable real numbers, unlike Champernowne's number. 
A. M. Turing gave the first example of a computable absolutely normal number in an unpublished manuscript.  This paper may be found in his collected works \cite{Turing}. The $n$'th digit of A. M. Turing's number may be computer with an algorithm that is doubly exponential in $n$. V. Becher, P. A. Heiber, and T. A. Slaman constructed an absolutely normal number in \cite{BecherHeiberSlaman} whose digits may be computed in polynomial time.
See \cite{BecherFigueiraPicchi} by V. Becher, S. Figueira, and R. Picchi for further discussion.

We will use \refmt{Main} to construct a computable $Q$-distribution normal number when $Q$ is computable and computably growing.

\begin{thrm}\labt{computableElement1}
Suppose that $Q$ is computable and computably growing, $P=\{X\}$, and $\overline{f}_{X,1,0}(x)=\underline{f}_{X,1,0}(x)=x$.  If $\{\alpha(n)\}$ is the sequence given in \refmt{Main}, then the real number $\sum_{n=1}^\infty \frac{\alpha(n)}{q_1 \cdots q_n}$ is computable and $Q$-distribution normal.
\end{thrm}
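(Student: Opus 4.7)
The plan is to instantiate \refmt{Main} with the specified data and then read off both conclusions directly from the structure of the resulting set $\Phi'_{Q,P,F}$. I would take $P=\{X\}$ (trivially an explicit sparsely intersecting set of polynomials) and extend the given adfs to an explicit linear family by setting $\overline{f}_{X,m,r}(x)=\underline{f}_{X,m,r}(x)=x$ for \emph{all} $m\in\mathbb{N}$ and $0\le r<m$; the linear-family identity $f_{m,r}=\frac{1}{d}\sum_{i=0}^{d-1} f_{md, mi+r}$ is trivially satisfied and the family is uniformly computable. Invoking \refmt{Main} then yields computable sequences $\{\alpha(n)\}$ and $\{\beta(n)\}$ and a set of full Hausdorff dimension
\[
\Phi'_{Q,P,F}=\{x\in [0,1):\alpha(n)\le E_n(x)\le \beta(n)\},
\]
every element of which is $Q$-distribution normal, since the $m=1$, $r=0$ instance of the defining condition of $\Phi_{Q,P,F}$ says precisely that $\OQx$ is u.d.\ mod $1$.

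Second, I would set $y:=\sum_{n=1}^\infty \alpha(n)/(q_1q_2\cdots q_n)$ and argue that $y\in \Phi'_{Q,P,F}$, and is therefore $Q$-distribution normal. For this one first needs to know that $\{\alpha(n)\}$ is a legitimate $Q$-Cantor series digit sequence, so that $E_n(y)=\alpha(n)$ for every $n$. The bound $0\le \alpha(n)\le q_n-1$ is immediate from $\alpha(n)\le \beta(n)\le q_n-1$; the condition $\alpha(n)\ne q_n-1$ for infinitely many $n$ is forced by the nonemptiness of $\Phi'_{Q,P,F}$ (guaranteed by its full Hausdorff dimension): picking any $x_0\in \Phi'_{Q,P,F}$, the inequality $\alpha(n)\le E_n(x_0)$ combined with $E_n(x_0)\ne q_n-1$ infinitely often yields $\alpha(n)< q_n-1$ infinitely often. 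This last verification, though short, is the main subtlety of the argument; without it, $y$ could conceivably coincide with a nonstandard representation of a rational rather than admit $\{\alpha(n)\}$ as its genuine Cantor expansion.

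Finally, for computability of $y$, I would use the computably growing hypothesis on $Q$. Given any $k\in\mathbb{N}$, the computable sequence $n\mapsto \inf\{i:\forall j\ge i,\ q_j\ge n\}$ appearing in the definition of ``computably growing'' allows the effective selection of an index $N$ with $q_1q_2\cdots q_N>2^k$. The partial sum $s_N:=\sum_{n=1}^N \alpha(n)/(q_1\cdots q_n)$ is a computable rational (both $\{\alpha(n)\}$ and $Q$ are computable), and the telescoping estimate
\[
|y-s_N|\le \sum_{n=N+1}^\infty \frac{q_n-1}{q_1\cdots q_n}=\frac{1}{q_1\cdots q_N}<2^{-k}
\]
exhibits $y$ as the limit of an effectively computable Cauchy sequence of rationals with an explicit rate, so $y$ is a computable real. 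Apart from the digit-legitimacy step in paragraph two, everything else is a routine combination of \refmt{Main} with standard facts from computable analysis.
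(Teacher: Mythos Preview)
Your proposal is correct and matches the paper's treatment, which states \reft{computableElement1} as an immediate corollary of \refmt{Main} without a separate proof; you have supplied exactly the natural details (extending the identity adf to a full explicit linear family, verifying that $\{\alpha(n)\}$ is a legitimate digit sequence via nonemptiness of $\Phi'_{Q,P,F}$, and deducing computability of $y$ from computability of the partial sums). As a minor simplification, the computably-growing hypothesis is not actually needed in your last paragraph: since every $q_n\ge 2$ one already has $q_1\cdots q_N\ge 2^N$, so truncating at $N=k$ gives error at most $2^{-k}$.
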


P. Lafer \cite{Lafer} asked for a construction of a $Q$-distribution normal number for an arbitrary basic sequence $Q$.  \reft{computableElement1} provides a partial answer to this question.  
However, we remark that there are basic sequences $Q$ such that no $Q$-distribution normal number is computable.  A. A. Beros and K. A. Beros showed in \cite{BerosBerosComputable} that there exists a limit computable basic sequence $Q$ such that no computable real number is $Q$-distribution normal.
Thus, it is impossible to answer P. Laffer's question for an arbitrary basic sequence.  It remains open how much one needs to assume about a basic sequence $Q$ in order to guarantee existence of computable $Q$-distribution normal numbers and how to construct one of these numbers.

\subsection{Application V: Constructing  examples of real numbers with different digital frequencies}

There is substantial literature in pertaining to the explicit construction of numbers with different digital frequencies. The notes of section 1.8 in \cite{KuN} provide a good list of papers on the subject. We solve an analogous problem for Cantor series expansions with $Q$ infinite in limit: constructing a computable real number $x$ so that $\OQx$ has a given adf $\phi$. This follows immediately from \refmt{Main} and is a more general version of \reft{computableElement1}.

\begin{thrm}\labt{computableElement2}
Suppose that the singleton $\{\phi \}$ is an explicit set of adfs, $Q$ is computable and computably growing, $P=\{X\}$, and $\overline{f}_{\operatorname{X},1,0}=\underline{f}_{\operatorname{X},1,0}=\phi$.  If $\{\alpha(n)\}$ is the sequence given in \refmt{Main}, then the real number $\xi=\sum_{n=1}^\infty \frac{\alpha(n)}{q_1 \cdots q_n}$ is computable and $\OQ{\xi}$ has adf $\phi$.
\end{thrm}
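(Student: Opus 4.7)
The plan is to obtain \reft{computableElement2} as a direct specialization of \refmt{Main}. I would first verify that the data $(Q,P,F)$ satisfies all hypotheses of the Main Theorem in its strong (effective) form. The polynomial set $P=\{X\}$ is sparsely intersecting vacuously (the defining condition concerns distinct pairs) and trivially explicit. The family given by $f_{X,m,r}=\phi$ for every $m$ and $r$ is a linear family of adfs, since the averaging relation $f_{m,r}=d^{-1}\sum_{i=0}^{d-1}f_{md,mi+r}$ collapses to $\phi=\phi$; it is an explicit linear family because $\{\phi\}$ is an explicit set of adfs by hypothesis. Together with the assumption that $Q$ is computable and computably growing, this triggers the effective conclusion of \refmt{Main}, producing computable sequences $\{\alpha(n)\}$, $\{\beta(n)\}$ and a set
\[
\Phi'_{Q,P,F}=\{x\in[0,1):\alpha(n)\leq E_n(x)\leq\beta(n)\}\subseteq\Phi_{Q,P,F}
\]
of full Hausdorff dimension.

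Next, I would set $\xi:=\sum_{n=1}^\infty \alpha(n)/(q_1\cdots q_n)$ and verify two properties. First, $\xi$ is computable: since $\{\alpha(n)\}$ is computable and $Q$ is computable and computably growing, the partial sum $\sum_{n=1}^N \alpha(n)/(q_1\cdots q_n)$ is a computable rational uniformly in $N$, and the tail is bounded by $1/(q_1\cdots q_N)$, which can be forced below any prescribed $2^{-k}$ by a computable choice of $N$, yielding a computable real. Second, $\mathscr{O}_Q(\xi)$ has adf $\phi$: by construction $E_n(\xi)=\alpha(n)$, so $\xi\in\Phi'_{Q,P,F}\subseteq\Phi_{Q,P,F}$; with $P=\{X\}$, $m=1$, $r=0$, the only sequence referenced in the definition of $\Phi_{Q,P,F}$ is $\mathscr{O}_{Q,X,1,0}(\xi)=\mathscr{O}_Q(\xi)$, whose upper and lower adfs are forced to both equal $\phi$, so $\phi$ is its adf.

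The point requiring the most care is checking that the lower-boundary digit sequence $(\alpha(n))$ actually yields a valid $Q$-Cantor series expansion of $\xi$, i.e.\ that $\alpha(n)\neq q_n-1$ infinitely often and hence that $E_n(\xi)=\alpha(n)$ holds identically. This, together with the inequality $\alpha(n)\leq\beta(n)$ needed for $\Phi'_{Q,P,F}$ to be nonempty, should fall out of the explicit construction in \refs{construction}, which is designed precisely to produce a nontrivial set of full Hausdorff dimension of real numbers whose digits lie between $\alpha$ and $\beta$; in particular the ``all lower bounds'' sequence is an admissible element. Modulo that construction, \reft{computableElement2} is a one-line corollary of \refmt{Main}, and no independent argument beyond bookkeeping is needed.
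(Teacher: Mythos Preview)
Your proposal is correct and matches the paper's approach: the paper gives no separate proof of \reft{computableElement2}, stating only that it ``follows immediately from \refmt{Main},'' and your write-up is precisely the verification of that claim. Your attention to the admissibility of the digit sequence $(\alpha(n))$ is a point the paper does not spell out; it is indeed handled by the construction in \refs{construction} via \refl{moranNumber}, which forces $\upsilon(n)\geq 2$ and hence $\alpha(n)<\beta(n)\leq q_n-1$.
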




\subsection{Application VI: Sharpening known theorems}

We mention four results from other papers that will follow as corollaries of our main theorem.  In fact the immediate corollaries will be stronger than the results stated in this section.  The following theorem was proven by J. Peyri\`{e}re in \cite{Peyriere}.

\begin{thrm}[J. Peyri\`{e}re]\labt{Pey}
Suppose that $Q$ is infinite in limit.  Then for all $\ell \in (0,1)$
$$
\dimh{\left\{x=0.E_1E_2\cdots \hbox{ w.r.t. }Q: \lim_{n \to \infty} \frac {1} {n}  \sum_{j=1}^n \frac {E_j} {q_j}=\ell\right\}  }=1.
$$
\end{thrm}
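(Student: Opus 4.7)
The plan is to derive \reft{Pey} as a direct consequence of \refmt{Main}. Fix $\ell \in (0,1)$ and choose any continuous adf $f:[0,1]\to[0,1]$ whose first moment $\int_0^1 y\,df(y)$ equals $\ell$; for instance one may take $f$ to be the cumulative distribution function of the uniform distribution on a subinterval of $[0,1]$ centered so as to give mean $\ell$. Apply \refmt{Main} with the singleton polynomial set $P=\{X\}$, which is trivially sparsely intersecting, and with $F$ consisting of the constant linear family $\overline{f}_{X,m,r}=\underline{f}_{X,m,r}=f$ for all $m\in\mathbb{N}$ and $0\le r<m$. The identity $f_{m,r}=\frac{1}{d}\sum_{i=0}^{d-1}f_{md,mi+r}$ required of a linear family holds trivially since every term equals $f$. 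The conclusion of \refmt{Main} then produces a set $\Phi_{Q,P,F}\subseteq[0,1)$ of Hausdorff dimension one such that for every $x\in\Phi_{Q,P,F}$ the orbit $\OQx=\{T_{Q,n}(x)\}_{n=0}^{\infty}$ admits $f$ as its asymptotic distribution function.

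Next I would relate the adf of $\OQx$ to the Ces\`aro averages of the digit ratios $E_n(x)/q_n$. Unpacking the Cantor series expansion yields the telescoping identity
\[
T_{Q,n}(x)=\frac{E_{n+1}(x)}{q_{n+1}}+\frac{T_{Q,n+1}(x)}{q_{n+1}},
\]
so
\[
\frac{1}{N}\sum_{n=1}^{N}\frac{E_n(x)}{q_n}=\frac{1}{N}\sum_{n=0}^{N-1}T_{Q,n}(x)-\frac{1}{N}\sum_{n=0}^{N-1}\frac{T_{Q,n+1}(x)}{q_{n+1}}.
\]
Since $Q$ is infinite in limit, $0\le T_{Q,n+1}(x)/q_{n+1}\le 1/q_{n+1}\to 0$, so the last Ces\`aro average vanishes. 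For the first term, the assumption that $\OQx$ has the continuous adf $f$ gives weak convergence of the empirical measures of $\{T_{Q,n}(x)\}$ to the probability measure $df$, and applying the Helly--Bray theorem to the bounded continuous function $g(y)=y$ yields $\frac{1}{N}\sum_{n=0}^{N-1}T_{Q,n}(x)\to\int_0^1 y\,df(y)=\ell$.

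Combining these two limits shows $\Phi_{Q,P,F}\subseteq\bigl\{x\in[0,1):\lim_{N\to\infty}\frac{1}{N}\sum_{j=1}^{N}E_j(x)/q_j=\ell\bigr\}$, so the target set contains a subset of full Hausdorff dimension and therefore has Hausdorff dimension equal to $1$. The only nontrivial ingredient is the invocation of \refmt{Main}; once the adf $f$ with first moment $\ell$ is specified, the telescoping identity together with $q_n\to\infty$ makes the passage from the adf of $\OQx$ to the limit of $\frac{1}{N}\sum_{j=1}^N E_j/q_j$ routine.
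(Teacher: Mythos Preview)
Your argument is correct. You invoke \refmt{Main} directly with the constant linear family equal to a continuous adf $f$ of first moment $\ell$, and then pass from the adf of $\OQx$ to the Ces\`aro limit of $E_n/q_n$ via the identity $T_{Q,n}(x)=E_{n+1}/q_{n+1}+T_{Q,n+1}(x)/q_{n+1}$ together with weak convergence of the empirical measures. All steps check out: the telescoping identity is exactly the one underlying \refl{SameADF}, the error term $T_{Q,n+1}(x)/q_{n+1}$ is Ces\`aro-negligible because $Q$ is infinite in limit, and Helly--Bray applies since $f$ is continuous and $g(y)=y$ is bounded continuous on $[0,1]$.

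The paper takes a different but related route. Rather than choosing an adf with first moment $\ell$ and reading off the Ces\`aro average, it deduces \reft{Pey} from \reft{strongerWangWenXi} (equivalently \reft{WangWenXi}) by taking $k=1$ and the closed set $D=\{\ell\}$. That produces a full-dimensional set on which the \emph{accumulation set} of $\OQxp$ is the singleton $\{\ell\}$, i.e.\ $E_n/q_n\to\ell$ termwise, from which the Ces\`aro limit is immediate. In effect the paper uses the degenerate adf concentrated at $\ell$ (via the accumulation-point machinery of \refs{arith}), whereas you use any continuous adf with the correct mean and then an extra weak-convergence step. Your approach is more direct in that it bypasses \reft{thrm3.2} and \reft{strongerWangWenXi} entirely; the paper's approach yields the slightly stronger intermediate conclusion $E_n/q_n\to\ell$ on the witnessing set.
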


For any sequence $X=\{x_n\}$ of real numbers, let $\mathbb{A}(X)$ denote the set of accumulation points of $X$. 
Given a set $D \subseteq [0,1]$, let
$$
\mathbb{E}_D(Q):=\left\{x=0.E_1E_2\cdots \hbox{ w.r.t. }Q: \mathbb{A}(\OQxp)=D      \right\}.
$$
The following results are proven by Y. Wang, Z. Wen, and L. Xi in \cite{WangWenXi}.
\begin{thrm}[Y. Wang, Z. Wen, and L. Xi]\labt{WangWenXi}
If $Q$ is infinite in limit, then $\dimh{\mathbb{E}_D(Q)}=1$ for every closed set $D$.
\end{thrm}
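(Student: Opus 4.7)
The plan is to imitate the cylinder construction behind \refmt{Main}, tailoring the admissibility directly to the prescribed accumulation set. Assume $D \neq \emptyset$, fix a countable set $\{d_k\}_{k=1}^\infty$ dense in $D$, and pick a sequence $(k_n)_{n \in \mathbb{N}}$ of positive integers in which every $k$ appears infinitely often (e.g.\ the diagonal enumeration $1,1,2,1,2,3,\ldots$). Since $Q$ is infinite in limit, there is a sequence $\e_n \to 0$ with $\e_n q_n \to \infty$ and $\log(1/\e_n) = o(\log q_n)$; for instance $\e_n := 1/\log(q_n + 3)$ once $q_n$ is large.

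Define integer digit bounds
\begin{equation*}
\alpha(n) := \max\{0, \lceil q_n(d_{k_n} - \e_n) \rceil\}, \qquad \beta(n) := \min\{q_n - 2, \lfloor q_n(d_{k_n} + \e_n) \rfloor\},
\end{equation*}
and let $\Phi := \{x \in [0,1) : \alpha(n) \leq E_n(x) \leq \beta(n) \text{ for all sufficiently large } n\}$. The inclusion $\Phi \subseteq \mathbb{E}_D(Q)$ is then straightforward: for $x \in \Phi$ one has $|E_n(x)/q_n - d_{k_n}| \leq \e_n + 1/q_n \to 0$, so closedness of $D$ forces every accumulation point of $\OQxp$ into $D$; conversely, along the infinite subsequence $\{n : k_n = k\}$ we get $E_n(x)/q_n \to d_k$, so each $d_k$ lies in $\mathbb{A}(\OQxp)$, and density of $\{d_k\}$ in $D$ combined with closedness of $\mathbb{A}(\OQxp)$ yields $D \subseteq \mathbb{A}(\OQxp)$.

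It remains to show $\dimh{\Phi} = 1$. The number of admissible digits at position $n$ is at least $\lfloor 2\e_n q_n \rfloor - 1$ eventually, hence of the form $q_n^{1 - o(1)}$. Place the uniform product measure $\mu$ on $\Phi$: on a basic rank-$n$ cylinder (of diameter $(q_1 \cdots q_n)^{-1}$) it has mass $\prod_{j=1}^n (\beta(j) - \alpha(j) + 1)^{-1}$. A Ces\`aro argument using $\log(1/\e_j) = o(\log q_j)$ and $q_n \to \infty$ gives
\begin{equation*}
\liminf_{n \to \infty} \frac{\sum_{j=1}^n \log(\beta(j) - \alpha(j) + 1)}{\sum_{j=1}^n \log q_j} = 1,
\end{equation*}
so the mass distribution principle delivers $\dimh{\Phi} \geq s$ for each $s < 1$, hence $\dimh{\Phi} = 1$.

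The main technical obstacle is the final step: one must compare $\mu$-measure on arbitrary small balls (not only basic cylinders) to the $s$-th power of the diameter. This is routine under the hypothesis that $Q$ is infinite in limit, because then $\log q_{n+1} = o(\log(q_1 \cdots q_n))$, so any small ball is sandwiched between two cylinders whose sizes differ by a factor that is negligible on the logarithmic scale. The standard sandwich argument then transfers the cylinder estimate to balls and completes the mass distribution lower bound.
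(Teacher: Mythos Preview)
Your overall strategy---build a cylinder set $\Phi\subseteq\mathbb{E}_D(Q)$ by forcing $E_n/q_n$ near a prescribed dense sequence in $D$, then estimate $\dimh{\Phi}$---is the right one, and it is essentially the idea underlying the paper's treatment (the paper routes the result through \reft{strongerWangWenXi} and ultimately the Main Theorem, but the engine is the same Moran-type construction). Your inclusion argument $\Phi\subseteq\mathbb{E}_D(Q)$ is clean and correct.

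The dimension step, however, has a genuine gap. Your justification for passing from cylinders to balls rests on the assertion that ``$Q$ infinite in limit'' implies $\log q_{n+1}=o\!\left(\log(q_1\cdots q_n)\right)$. This is false: for $q_n=2^{2^n}$ the ratio tends to $1$, and for $q_n=2^{n!}$ it tends to $\infty$. So the ``routine sandwich'' you invoke does not exist in general, and the mass distribution step as written does not go through. Worse, even if you replace the mass distribution argument by the Feng--Wen--Wu Moran bound (as the paper does), your particular choice $\e_n=1/\log(q_n+3)$ is still inadequate: that bound reads
\[
\dimh{\Phi}\ \ge\ \liminf_{k\to\infty}\frac{\sum_{j\le k}\log n_j}{\sum_{j\le k}\log q_j+\bigl(\log q_{k+1}-\log n_{k+1}\bigr)},
\]
and with your $\e_n$ the correction term $\log q_{k+1}-\log n_{k+1}\asymp\log\log q_{k+1}$ can be comparable to $\sum_{j\le k}\log q_j$ for fast-growing $Q$, so the liminf need not be $1$.

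The fix is exactly what the paper does: couple $\e_n$ to the preceding product, e.g.\ take $\e_n$ so that $\e_n\log q_n\le\bigl(\log(q_1\cdots q_{n-1})\bigr)^{1/2}$ (the paper uses $\e_n=\min\{(\log q_n)^{1/2},(\log(q_1\cdots q_{n-1}))^{1/2}\}/\log q_n$). Then one still has $n_j=q_j^{1-o(1)}$ termwise, while the extra denominator term satisfies $\e_{k+1}\log q_{k+1}\le(\log(q_1\cdots q_k))^{1/2}=o\!\left(\sum_{j\le k}\log q_j\right)$, and the Feng--Wen--Wu bound gives $1$. With that adjustment your direct argument is complete and is in fact more elementary than the paper's route through the full adf machinery.
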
 

Let 
$$
\mathbb{E}_{D,m,r}(Q):=\left\{x=0.E_1E_2\cdots \hbox{ w.r.t. }Q: \mathbb{A}(\OQmrxp)=D_{m,r}\right\}.
$$
Note that \reft{Pey} follows as a direct consequence of \reft{WangWenXi} by considering the closed set $D = \{\ell \}$ for $0\leq \ell \leq 1$.
We will prove the following theorem as a corollary of \refmt{Main} in \refs{arith}.
\begin{thrm}\labt{strongerWangWenXi}
Suppose that  $k \in \mathbb{N}$ and closed sets $D_{m,r}$ are contained in $[0,1]$ for $1\leq m \leq k$, and $0\leq r < m$. If $\bigcap_{m=1}^k \bigcap_{r=0}^{m-1} D_{m,r} \neq \emptyset$, then 
$$
\dimh{\bigcap_{m=1}^k \bigcap_{r=0}^{m-1} \mathbb{E}_{D_{m,r},m,r}(Q)} =1.
$$
\end{thrm}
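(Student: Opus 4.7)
The plan is to derive Theorem~\ref{thm:strongerWangWenXi} from Main Theorem~\ref{thm:Main}. As a preliminary reduction, since $Q$ is infinite in limit, the telescoping estimate $|\tqn{x} - E_{n+1}(x)/q_{n+1}| \leq 1/q_{n+1}$ tends to $0$, so the accumulation set $\mathbb{A}(\OQmrx)$ equals $\mathbb{A}(\mathscr{O}'_{Q,m,(r+1) \bmod m}(x))$ (a shift of the initial index does not affect the accumulation set of an infinite sequence). After cyclically relabeling residues $r \mapsto r-1 \bmod m$, it suffices to prescribe the accumulation sets $\mathbb{A}(\OQmrx) = D_{m,r}$, which is the regime of the Main Theorem.

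I apply the Main Theorem with $P = \{X\}$ and a linear family $F$ of adfs that encodes the prescribed accumulation sets. Fix $y_0 \in \bigcap_{1 \leq m \leq k,\, 0 \leq r < m} D_{m,r}$, nonempty by hypothesis. For each such $(m, r)$ fix a countable dense subset $\{y^{(m,r)}_j\}_{j \geq 0} \subseteq D_{m,r}$ with $y^{(m,r)}_0 = y_0$, and let $f_{m,r}$ be the cdf of the atomic measure $\mu_{m,r} = \sum_j 2^{-j-1}\delta_{y^{(m,r)}_j}$. Then $f_{m,r}$ is an adf with $\mathrm{supp}(\mu_{m,r}) = D_{m,r}$ and $f_{m,r}(y+\epsilon) > f_{m,r}(y-\epsilon)$ for every $y \in D_{m,r}$ and $\epsilon > 0$. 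Set $\overline{f}_{X,m,r} = \underline{f}_{X,m,r} = f_{m,r}$, and for $m > k$ extend $\{f_{m,r}\}$ to a full linear family anchored at $y_0$; the averaging identity $f_{m,r} = \frac{1}{d}\sum_{i=0}^{d-1} f_{md, mi+r}$ can be propagated consistently because every $\mu_{m,r}$ charges the common point $y_0$.

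Main Theorem~\ref{thm:Main} now gives $\dimh{\Phi_{Q,\{X\},F}} = 1$, and for $x \in \Phi_{Q,\{X\},F}$ the sequence $\OQmrx$ has adf $f_{m,r}$. For each $y \in D_{m,r}$ and each $\epsilon > 0$, the density of terms of $\OQmrx$ in $(y-\epsilon, y+\epsilon)$ tends to $f_{m,r}(y+\epsilon) - f_{m,r}(y-\epsilon) > 0$, so $y$ is an accumulation point, giving $D_{m,r} \subseteq \mathbb{A}(\OQmrx)$.

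The main obstacle is the reverse inclusion $\mathbb{A}(\OQmrx) \subseteq D_{m,r}$: an adf alone does not rule out accumulation points off its support, since a subsequence of density zero can still be topologically infinite. To handle this I pass to the explicit subset $\Phi'_{Q,P,F} = \{x : \alpha(n) \leq E_n(x) \leq \beta(n)\}$ built in the proof of the Main Theorem in Section~\ref{section:construction}, and arrange the bounds so that both $\alpha(n)/q_n$ and $\beta(n)/q_n$ lie in an $\epsilon_n$-neighborhood of $\bigcap_{m \leq k} D_{m, n \bmod m}$, with $\epsilon_n \to 0$. This intersection always contains $y_0$ and so is nonempty, and the digits $E_n(x)/q_n$ are therefore forced to lie in every open neighborhood of $D_{m, n \bmod m}$ for all sufficiently large $n$. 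This yields $\mathbb{A}(\OQmrx) \subseteq D_{m,r}$, and full Hausdorff dimension is preserved because $\beta(n) - \alpha(n)$ can still be made to tend to infinity.
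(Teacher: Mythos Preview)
Your overall architecture matches the paper's---build adfs whose ``increasing set'' is $D_{m,r}$, invoke the Main Theorem, and use the explicit $\Phi'$ to pin down accumulation points---but two steps do not go through as written.

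\textbf{The linear-family extension.} You fix $f_{m,r}$ for each $m\le k$ independently (as the cdf of an atomic measure on $D_{m,r}$) and then assert that the family can be ``propagated'' to all $m$. But the identity $f_{m,r}=\tfrac1d\sum_{i}f_{md,mi+r}$ already constrains the functions with $m\le k$ among themselves; for instance $f_{1,0}=\tfrac12(f_{2,0}+f_{2,1})$ forces $I(f_{1,0})=I(f_{2,0})\cup I(f_{2,1})$ and, at the level of your measures, $\mu_{1,0}=\tfrac12(\mu_{2,0}+\mu_{2,1})$. Independently chosen atomic measures on $D_{1,0},D_{2,0},D_{2,1}$ will essentially never satisfy this, and a shared atom at $y_0$ does not help. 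So the hypothesis of the Main Theorem is not met.

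\textbf{The reverse-inclusion fix.} Confining $E_n/q_n$ to an $\epsilon_n$-neighborhood of $\bigcap_{m\le k}D_{m,\,n\bmod m}$ is too strong: under the theorem's sole hypothesis this intersection can be the singleton $\{y_0\}$ for every $n$, in which case every $E_n/q_n$ is forced near $y_0$ and $\mathbb{A}(\OQmrxp)=\{y_0\}$ for all $(m,r)$, destroying the forward inclusion $D_{m,r}\subseteq\mathbb{A}(\OQmrxp)$ that your first three paragraphs were meant to secure.

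The paper handles both issues at once by going through \reft{thrm3.2}, whose proof reads off from the construction of $\Phi'$ that each $E_n/q_n$ lies within $\Delta_{i(n)}$ of a point $\inf g^{-1}_{\,i(n)!,\,d(n)}(\cdot)\in I(f_{i(n)!,d(n)})$. The paper then takes $f_{q,s}=f_{A_{q,s}}$ (from \refl{adfclosed}) with $A_{q,s}=D_{q,s}$ for $q\le k$ and, for $q>k$, $A_{q,r}$ an intersection of the relevant $D_{d,\cdot}$. This guarantees $I(f_{i!,d})=A_{i!,d}\subseteq D_{m,\,d\bmod m}$ for every $m\le k$ with $m\mid i!$, which yields the reverse inclusion along each progression while still allowing the digits to range over all of $D_{m,r}$ (so the forward inclusion, via \refl{acc}, survives). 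In other words, what is actually needed is the nesting $I(f_{q,r})\subseteq I(f_{m,\,r\bmod m})$ for $m\mid q$, not an arbitrary extension anchored at a single point.
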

Note that \reft{WangWenXi}  follows immediately from \reft{strongerWangWenXi} by setting $k=1$.  Similarly, we may obtain \reft{Pey} by setting $k=1$ and $D_{1,1}=\{\ell\}$.

In  \cite{MyersonPollington} G. Myerson and A. D. Pollington proved the following result.

\begin{thrm}[G. Myerson and A. D. Pollington]\labt{mp}
There exists a uniformly distributed sequence $X=\{x_n\}$ such that none of the sequences $\{x_{kn+j}\}$ are u.d. mod 1 for any $k \geq 2$. 
\end{thrm}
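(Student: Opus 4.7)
The plan is to derive \reft{mp} as an immediate corollary of \reft{NormNotAP}(2), which itself follows from \refmt{Main}. The strategy: fix any basic sequence $Q$ that is infinite in limit (for concreteness, $q_n = n+1$), and use \reft{NormNotAP}(2) to produce an $x \in [0,1)$ that is simultaneously $Q$-distribution normal and AP $Q$-distribution abnormal. Setting $x_n := T_{Q,n-1}(x)$ gives a sequence $\br{x_n}_{n=1}^\infty$ (equivalently, $\OQx$ up to re-indexing) that is u.d. mod 1, while for every $k \geq 2$ and $0 \leq j < k$ the subsequence $\br{x_{kn+j}}$ fails to be u.d. mod 1. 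This is exactly the Myerson--Pollington conclusion.

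Alternatively, one can bypass \reft{NormNotAP} and work directly from \refmt{Main}. Take $P = \{X\}$ (the identity polynomial, trivially a sparsely intersecting singleton), so that $\OQfmrx{X} = \OQmrx$. Specify a linear family $F$ of upper and lower adfs with $\overline{f}_{X,1,0}(t) = \underline{f}_{X,1,0}(t) = t$, forcing $\OQx$ to be u.d. mod 1, while $\overline{f}_{X,m,r} = \underline{f}_{X,m,r}$ equals some non-identity adf for each $m \geq 2$ and $0 \leq r < m$, forcing non-uniformity of every arithmetic subsequence. Any element of $\Phi_{Q,P,F}$, which is a set of full Hausdorff dimension by \refmt{Main}, then yields a Myerson--Pollington sequence via $x_n := T_{Q,n-1}(x)$.

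The key technical step, and the main obstacle, is constructing such a linear family of adfs. The consistency relation $f_{m,r} = d^{-1} \sum_{i=0}^{d-1} f_{md, mi+r}$ must hold for all $m,r,d$ simultaneously, and we need each $f_{m,r}$ for $m \geq 2$ to differ from the identity. A natural approach is to build an auxiliary function $a: \mathbb{N} \to [0,1]$ and set
\begin{equation*}
f_{m,r}(t) := \lim_{N \to \infty} \frac{1}{N} \# \br{ 0 \leq n < N : a(mn+r) \leq t };
\end{equation*}
the nesting of arithmetic progressions $\br{(md)n + (mi+r) : n \geq 0} \subset \br{mn+r : n \geq 0}$ then automatically yields the linearity condition. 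To build $a$, partition $\mathbb{N}$ into consecutive blocks $B_k$ whose lengths grow rapidly, and on $B_k$ distribute the values of $a$ so that the overall empirical density on $B_k$ is uniform, while for each $m \leq k$ and $0 \leq r < m$, the restriction of $a$ to the residue class $r$ mod $m$ samples from a fixed non-Lebesgue distribution $\nu_{m,r}$ satisfying $m^{-1} \sum_{r=0}^{m-1} \nu_{m,r} = \lambda$. Rapid growth of the blocks ensures the limits exist and equal the prescribed $f_{m,r}$. This is essentially the construction carried out in the proof of \reft{NormNotAP}, so no new work is required beyond invoking that theorem.
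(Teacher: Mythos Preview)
Your proposal is correct and matches the paper's approach exactly: the paper also fixes $q_n = n+1$, invokes \reft{NormNotAP} to obtain a real $\xi$ that is $Q$-distribution normal but AP $Q$-distribution abnormal, and sets $x_n = T_{Q,n-1}(\xi)$. Your additional sketch of the linear-family construction differs in flavor from the paper's (which builds nested sets $S_{n,r}$ and applies \refl{adfclosed} rather than using an auxiliary function $a$), but since you ultimately defer to \reft{NormNotAP} this is immaterial.
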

Moreover, G. Myerson and A. D. Pollington provide an example of a sequence with the property described in \reft{mp}.\\

Let $Q=\{q_n\}$ be given by $q_n=n+1$.  By \reft{NormNotAP} there exists a computable real number $\xi$ that is $Q$-distribution normal and AP $Q$-distribution abnormal. Setting $x_n=T_{Q,n-1}(\xi)$, we see that \reft{mp} follows immediately from \reft{NormNotAP}. Moreover, the sequence $\{x_n\}$ is computable. Thus, \refmt{Main} provides a strong generalization of \reft{mp}.  

Lastly, given any adf $f$, J. G. van der Corput constructed a sequence with adf $f$ in \cite{CorputDistFunc}.  The proof of \refmt{Main} gives a far more general construction.


\section{Main theorem and construction}


In order to prove \refmt{Main}, we will need to construct the set $\Phi'_{Q,P,F}$.

\subsection{Intersection of Polynomial Sequences}\labs{polyisect}
To construct a sequence $\OQx$ with desired distributional properties along polynomially indexed subsequences we will need to estimate $\de(p,q,s,n)$.

We say an equation $F(x,y)=0$ with $F(x,y) \in \mathbb{Z}[X,Y]$ has infinitely many solutions with bounded denominator if there is some $\Delta \in \mathbb{Z}$ such that there are infinitely many $(x,y) \in \mathbb{Q} \times \mathbb{Q}$ with $\Delta x, \Delta y \in \mathbb{Z}$ and $F(x,y)=0$. If there are infinitely many integer solutions, there are infinitely many solutions of bounded denominator.

We will need the following theorem by Y. Bilu and R. Tichy \cite{BiluTichy}.

\begin{thrm}[Y. Bilu and R. Tichy]\labt{BiluTichy}
Let $p,q \in \mathbb{Z}[X]$. The equation $p(x)=q(y)$ has infinitely many solutions of bounded denominator if and only if there exist a polynomial $\phi \in \mathbb{Q}[X]$, linear polynomials $u, v \in \mathbb{Q}[X]$ and polynomials $f,g \in \mathbb{Z}[X]$ where $(f,g)$ is a standard pair such that $\phi \circ p \circ f \circ u = \phi \circ q \circ g \circ v$. The standard pairs are
\begin{enumerate}
	\item $(x^m, ax^r s(x)^m)$ with $0\leq r < m, (r,m)=1$ and $r+deg(p)>0$;
	\item $(x^2, (ax^2+b) s(x)^2)$;
	\item $(D_m(x, a^n), D_n(x, a^m))$, where $D_m(x,a)$ is the $m$-th Dickson polynomial and $(m,n)=1$;
	\item $(a^{-m/2} D_m(x,a), -b^{-n/2} D_n(x,b) )$ with $(m,n)=2$;
	\item $( (ax^2-1)^3, 3x^4-4x^3)$.
	
	\end{enumerate} where $s(x) \in \mathbb{Q}[X]$ is non-zero (possibly constant polynomial, and $a,b \in \mathbb{Q}$ are non-zero.
\end{thrm}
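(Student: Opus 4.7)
The plan is to establish the Bilu-Tichy theorem by combining Siegel's theorem on integral points with a classification of polynomial decompositions. The ``if'' direction is routine: for each of the five standard pair families $(f,g)$, one exhibits explicit parametrizations showing that $f(x) = g(y)$ has infinitely many integer solutions, and then reading the identity $\phi \circ p \circ f \circ u = \phi \circ q \circ g \circ v$ backwards transports these to bounded-denominator rational solutions of $p(x) = q(y)$. For instance, in case (1) every pair $(s(t)t^r, t^m)$ with $t \in \mathbb{Z}$ is a solution; the Dickson cases use the identity $D_m(u+u^{-1}, a) = u^m + a^m u^{-m}$ and a choice of unit $u$.

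The substance lies in the ``only if'' direction. Let $C$ be the affine curve $p(X) - q(Y) = 0$, and suppose $C$ has infinitely many rational points of bounded denominator. By Siegel's theorem on $S$-integral points, at least one absolutely irreducible component $C_0$ of $C$ must have geometric genus zero and at most two places at infinity over $\overline{\mathbb{Q}}$. The two coordinate projections realize $C_0$ as a cover of $\mathbb{P}^1$ in two ways, related by $p$ and $q$; I would read off the ramification at infinity from Puiseux expansions of $y$ in terms of $x$ near each branch at infinity, and conversely.

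Next I would invoke Ritt's decomposition theorems to factor $p$ and $q$ into indecomposable polynomials. The genus-zero plus two-places-at-infinity constraint on $C_0$, combined with the Riemann-Hurwitz formula, translates into severe restrictions on the monodromy groups of $p$ and $q$ as branched covers $\mathbb{P}^1 \to \mathbb{P}^1$. Using the classification of indecomposable polynomials --- essentially cyclic powers $x^n$, Dickson (Chebyshev) polynomials $D_n(x,a)$, and sporadic families tied to the exceptional transitive action of $PSL(2,p)$ --- the problem reduces to a finite case analysis of which pairs of indecomposable building blocks can combine to produce a fiber product $p(X) = q(Y)$ with a component of the required geometric type.

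The main obstacle is precisely this finite classification. Showing that the surviving possibilities are exactly the five standard pair families, and that all residual flexibility (outer polynomial factors, reparametrizations) can be absorbed into an outer $\phi \in \mathbb{Q}[X]$ and the linear substitutions $u,v \in \mathbb{Q}[X]$, requires delicate monodromy and group-theoretic bookkeeping; this case analysis is the technical heart of the Bilu-Tichy argument. The essential inputs that make it tractable are Fried's work on indecomposable polynomials and the detailed ramification calculus afforded by the two-points-at-infinity condition, which together prune the a priori infinite list of candidate covers down to the five explicit families.
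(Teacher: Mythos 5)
The paper contains no proof of this statement: it is quoted from Bilu and Tichy \cite{BiluTichy} and used as a black box in the proof of \refl{SparseIntersect}, so the only meaningful comparison is with the original argument in the literature. Your outline does track the architecture of that argument --- Siegel's theorem forcing an absolutely irreducible component of the curve $p(X)-q(Y)=0$ to have genus zero and at most two places at infinity, then Ritt--Fried decomposition theory, ramification/monodromy analysis, and a reduction to a finite list of exceptional covers --- so you have identified the right route.

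But as a proof there is a genuine gap, which you yourself flag: the step in which the surviving possibilities are shown to be exactly the five standard-pair families, with all remaining freedom absorbed into the outer polynomial $\phi$ and the linear substitutions $u,v$, is only named (``delicate monodromy and group-theoretic bookkeeping''), not carried out. That classification \emph{is} the theorem; everything before it (Siegel plus Ritt) was essentially known, so what you have is a plan rather than a proof. Moreover, even the ``routine'' direction is not correct as written: for the first standard pair your proposed solutions $(x,y)=(s(t)t^{r},\,t^{m})$ give $f(x)=s(t)^{m}t^{rm}$ while $g(y)=a\,t^{rm}s(t^{m})^{m}$, which are unequal in general; exhibiting infinitely many bounded-denominator solutions for each standard pair requires a more careful parametrization (exploiting $\gcd(r,m)=1$ and, in the Dickson cases, the functional equation you cite), and the passage back through $\phi\circ p\circ f\circ u=\phi\circ q\circ g\circ v$ needs an argument that $\phi$ can be stripped off without losing infinitude of solutions. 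If your intent is only to justify using the theorem in this paper, a citation suffices, as the authors do; if your intent is to prove it, the case analysis must actually be done.
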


With this theorem we can prove the following.

\begin{lem}\labl{SparseIntersect}
If $p, q \in \mathbb{Z}[X]$, then $p \approx q$ if and only if there exist linear polynomials $\mu$, $\lambda$ $\in \mathbb{Z}[X]$ such that $p \circ \mu = q \circ \lambda$.
\end{lem}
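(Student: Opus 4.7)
Plan. Treat the two directions separately. For the ``if'' direction, suppose $p \circ \mu = q \circ \lambda$ for linear $\mu, \lambda \in \mathbb{Z}[X]$. Then $\deg p = \deg q =: d$, and each sufficiently large $n \in \mathbb{N}$ yields a common value $p(\mu(n)) = q(\lambda(n))$ of size $\Theta(n^d)$. This contributes $\Theta(n^{1/d} - s^{1/d})$ common values to any interval $[s, n]$, matching $\#(q(\mathbb{N}) \cap [s, n]) = \Theta(n^{1/d} - s^{1/d})$; hence $\de(p, q, s, n)$ is bounded below by a positive constant, giving $q \lesssim p$, and by symmetry $p \lesssim q$.

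For the ``only if'' direction, assume $p \approx q$. A preliminary density argument forces $\deg p = \deg q =: d$: if $\deg p > \deg q$, then $\#(q(\mathbb{N}) \cap [s, n])/\#(p(\mathbb{N}) \cap [s, n]) \to \infty$, forcing $\de(q, p, s, n) \to 0$ and contradicting $p \lesssim q$. The assumption $p \approx q$ then supplies infinitely many integer solutions to $p(x) = q(y)$, so \reft{BiluTichy} provides $\phi \in \mathbb{Q}[X]$, linear $u, v \in \mathbb{Q}[X]$, and a standard pair $(f, g) \in \mathbb{Z}[X]^2$ with $\phi \circ p \circ f \circ u = \phi \circ q \circ g \circ v$. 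Cancelling the nonconstant $\phi$ (absorbing any deck transformation into the linear factors $u, v$) reduces this to a polynomial identity $p \circ F = q \circ G$ with $F := f \circ u$ and $G := g \circ v$ in $\mathbb{Q}[X]$.

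The crux is to force $F$ and $G$ linear. Using \reft{BiluTichy} together with Siegel's theorem, every integer solution of $p(x) = q(y)$, save finitely many, lies on one of finitely many polynomial parameterizations $p \circ F_i = q \circ G_i$ over $\mathbb{Q}$. Each such parameterization contributes $\Theta(n^{1/(d \deg F_i)} - s^{1/(d \deg F_i)})$ common values to $[s, n]$; the ratio to $\#(q(\mathbb{N}) \cap [s, n]) = \Theta(n^{1/d} - s^{1/d})$ is $O(n^{1/(d \deg F_i) - 1/d})$, which tends to $0$ whenever $\deg F_i \geq 2$. Thus if every $\deg F_i \geq 2$, the total contribution forces $\varlimsup \de(p, q, s, n) = 0$, contradicting $p \approx q$; hence some $\deg F_i = 1$. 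Consulting the five standard-pair families, linear $f$ occurs only in family~(1) with $m = 1$, where $g = a s(x)$, and the degree balance $d \deg F = d \deg G$ then forces $\deg s = 1$, so $G$ is linear as well. Finally, writing $F(t) = \alpha t + \beta$ and $G(t) = \gamma t + \delta$ in $\mathbb{Q}[t]$, the set $T := \{t \in \mathbb{Q} : F(t), G(t) \in \mathbb{Z}\}$ is infinite (it accounts for the infinitely many integer solutions from this parameterization), and as an intersection of two cosets of cyclic subgroups of $(\mathbb{Q}, +)$ it has the form $t_0 + t_1 \mathbb{Z}$ for some $t_1 \neq 0$. Setting $\mu(k) := F(t_0 + t_1 k)$ and $\lambda(k) := G(t_0 + t_1 k)$ yields linear polynomials taking integer values on all of $\mathbb{Z}$, hence $\mu, \lambda \in \mathbb{Z}[k]$, and $p \circ \mu = q \circ \lambda$ is inherited from $p \circ F = q \circ G$. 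The main obstacle is the density analysis in this paragraph: ruling out nonlinear $F_i, G_i$ requires control over \emph{all} common values of $p$ and $q$, for which the full \reft{BiluTichy} classification (supplemented by Siegel's theorem for the finitely many sporadic solutions not captured by a standard-pair family) is the decisive input.
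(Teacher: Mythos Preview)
The gap is in your assertion that ``every integer solution of $p(x)=q(y)$, save finitely many, lies on one of finitely many polynomial parameterizations $p\circ F_i=q\circ G_i$ over $\mathbb{Q}$.'' Siegel's theorem disposes only of components of genus $\ge 1$ or of genus $0$ with at least three points at infinity; it says nothing about genus-$0$ components with exactly two points at infinity. These are precisely the Pell-type curves arising from the quadratic standard pairs (types~2 and~4 in \reft{BiluTichy}), and such a component can carry infinitely many integer points while admitting \emph{no} nonconstant polynomial parameterization over $\mathbb{Q}$: for instance, any polynomial identity $F(t)^2-2G(t)^2=-1$ forces $F,G$ constant, yet $x^2-2y^2=-1$ has infinitely many integer solutions. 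Hence your density estimate $\Theta(n^{1/(d\deg F_i)})$ does not cover these solutions, and the conclusion that some $\deg F_i=1$ is unjustified. (A related issue: from the Bilu--Tichy data $p=\phi\circ f\circ u$, $q=\phi\circ g\circ v$ one does not directly obtain an identity $p\circ F=q\circ G$ without inverting $f$ or $g$, which is only possible polynomially when they are linear; your ``cancel $\phi$'' step glosses over this.)

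The paper's proof closes exactly this hole. After noting that $\deg p=\deg q$ forces $\deg f=\deg g$ in the standard pair, it enumerates the equal-degree possibilities: either $f,g$ are both linear (types~1 and~3), which yields the desired linear $\mu,\lambda$, or $f,g$ are quadratic of the shape $an^2+b$ (types~2 and~4). In the quadratic case the paper does not attempt a polynomial parameterization; it recognizes the resulting equation as a generalized Pell equation $x_1^2-dy^2=N$, whose integer solutions grow like powers of a fixed unit in $\mathbb{Z}[\sqrt d]$, so there are only $O(\log n)$ of them below $n$. This is far too sparse to contribute to $\varlimsup\de(p,q,s,n)$, and the contradiction with $p\approx q$ follows. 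Your density idea is sound in spirit, and your final passage from $\mathbb{Q}$-linear to $\mathbb{Z}$-linear $\mu,\lambda$ is more careful than the paper's, but you need this Pell-equation count (or an equivalent argument) to handle the non-polynomially-parameterized components.
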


\begin{proof}

For the forward direction, suppose that two linear polynomials $\mu(n) = mn+r$ and $\lambda(n) = m'n + r'$ exist so that $p \circ \mu = q \circ \lambda$. Then 
$$
\varlimsup_{s-n \to \infty} \de(p,q,s,n) \geq \frac{1}{m'}>0.
$$
 Similarly, $\varlimsup_{s-n \to \infty} \de(q,p,s,n) \geq \frac{1}{m}>0$.

For the reverse direction we look at two cases: when $\deg(p)=\deg(q)$ and when $\deg(p)\neq \deg(q)$.
Suppose first that $\deg(p) = k > \deg(q) = l$. Write 
\begin{align*}
p(n) &= a_k n^k + \cdots + a_1 n + a_0;\\
q(n) &= b_l n^l + \cdots + b_1 n + b_0.
\end{align*} Then 
$$
\varlimsup_{s-n \to \infty} \frac{\# p(\mathbb{N})\bigcap \{s, s+1, \cdots, n\}}{p^{-1}(n)-p^{-1}(s)} = 1.
$$ 
But 
$$
\varlimsup_{s-n \to \infty} \frac{p^{-1}(n)-p^{-1}(s)}{\frac{1}{a_k} n^{\frac{1}{k}}-{a_k} s^{\frac{1}{k}}} = 1,
$$
so 
$$
\varlimsup_{s-n \to \infty} \frac{a_k \# p(\mathbb{N})\bigcap \{s, s+1, \cdots, n\}}{n^{\frac{1}{k}}-s^{\frac{1}{k}}} = 1.
$$ 
Similarly, 
$$
\varlimsup_{s-n \to \infty} \frac{b_l \# q(\mathbb{N})\bigcap \{s, s+1, \cdots, n\}}{n^{\frac{1}{l}}-s^{\frac{1}{l}}} = 1.
$$ 
Since 
$$
\# (p(\mathbb{N}) \bigcap q(\mathbb{N}) \bigcap \{ 1, 2, \cdots n\}) \leq \# (p(\mathbb{N})\bigcap \{1, 2, \cdots, n\}),
$$ 
we have that 
$$
\varlimsup_{n-s \to \infty} \de(p,q,s,n) \leq \varlimsup_{n-s \to \infty} \frac{b_l n^\frac{1}{k}-b_l s^\frac{1}{k}}{a_k n^\frac{1}{l}-a_k s^\frac{1}{l}} = 0.
$$
 But $p$ and $q$ have different degrees, so they cannot be equal when composed with linear polynomials. Thus the theorem holds for this case.

Now suppose that $\deg(p) = \deg(q) = k$. In order to have that $p \approx q$, there must be infinitely many integer solutions to the equation $p(x)=q(y)$. By \reft{BiluTichy}, we must have that there exists a polynomial $\phi \in \mathbb{Q}[X]$, linear polynomials $\mu, \lambda \in \mathbb{Q}[X]$, and a standard pair of polynomials $f,g \in \mathbb{Z}[X]$.

Since $\deg(p)=\deg(q)$, we must have that $\deg(f) = \deg(g)$. But there are only a few cases where a standard pair of polynomials can have the same degree. For standard pairs of the first and third type, we must have that $f$ and $g$ are linear. For standard pairs of the second and fourth type, we must have that $f$ and $g$ are of the form $an^2+b$ with $a,b \in \mathbb{Q}$. Standard pairs of the fifth kind cannot have equal degrees. If $f$ and $g$ are linear polynomials, then the proof of the previous direction suffices. 

We only need to prove the claim when $f$ and $g$ are quadratic with zero linear term, or equivalently when $f(n) = n^2$ and $g(n)=an^2+b$. In this case, note that the equation $x^2 = dy^2+e$ is equivalent to $ax^2+by^2=c$ with integers $a$, $b$, and $c$. The solutions of this equation are distributed accoring to a number of different relations between $a$, $b$, and $c$. The only scenario where this equation has infinitely many solutions is when -ab is not square and positive. In that case, the equation can be rewritten as $x_1^2-dy^2 = N$, with $x_1 = ax$, $d=-ab$ and $N=ac$. Integer solutions to this equation give an upper bound to solutions of the original equation. This is the generalized Pell equation, whose solutions are well known. They are of the form $r_i u_i^n$ for $1\leq i \leq m$ where there are finitely many base solutions $r_i$, and all further solutions are generated by multiplying by units $u_i$ in $\mathbb{Z}[\sqrt d]$. Thus $$\# (p(\mathbb{N}) \bigcap q(\mathbb{N}) \bigcap \{ s, s+1, \cdots n\}) \leq \sum_{i=1}^m \frac{\log(n)}{\log(u_i)},$$ so 
$$\varlimsup_{n-s \to \infty} \de(p,q,s,n) \leq \varlimsup_{n-s \to \infty} \frac{\sum_{i=1}^m \frac{\log(n)}{\log(u_i)}}{n^{\frac{1}{k}}-s^{\frac{1}{k}}}=0.$$
\end{proof}

With this lemma we can now construct a saturated sparsely intersecting set of polynomials.
$\mathscr{d}$
\begin{proof}[Proof of \refl{iscomplete}]
Start by ordering $\mathbb{Z}[X]$ as follows. First, list all polynomials of degree less than or equal to $1$ with coefficients whose absolute values are less than or equal to 1. Then list all polynomials of degree at most $2$ and coefficients with absolute values at most $2$ ordered lexicographically, removing any repeated polynomials. At step $n$, list all polynomials of degree at most $n$ and coefficients with absolute values at most $n$ ordered lexicographically. In this way, we create a bijection between the natural numbers and $\mathbb{Z}[X]$. If $p_i \approx p_j$ for any $j<i$, then there exist $q$ and $\mu$, $\lambda$ such that $p_j = q \circ \mu$ and $p_i = q \circ \lambda$. Replace $p_j$ by $q$, remove any other instances of $q$ in the ordering, and remove $p_i$. Let $P_i$ be the result of this operation completed for $p_1, p_2, \cdots p_i$. Then $P = \bigcap P_i$ is our desired indexed set.
\end{proof}





\subsection{Explicit asymptotic distribution functions and polynomials}
In order to construct computable members of $\Phi'_{Q,P,F}$ we will need to better understand explicit families of adfs and polynomials.
\begin{lem}\labl{approxadf}
If $\{ f_{m,r} \}$ is an explicit linear family of adfs, then there exists a sequence of explicit linear families of continuous adfs $\{ g_{n,m,r} \}$ so that $g_{n,m,r}$ converges to $f_{m,r}$ pointwise.
\end{lem}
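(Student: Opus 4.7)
The plan is to apply a common linear smoothing operator to each member of the family, which automatically preserves the linear family identity. Set $\epsilon_n = 1/n$ and extend each $f_{m,r}$ by $0$ on $(-\infty,0)$. For $x \in [0, 1-\epsilon_n]$ define
\[
g_{n,m,r}(x) := \frac{1}{\epsilon_n} \int_{x-\epsilon_n}^{x} f_{m,r}(y)\, dy,
\]
and on $[1-\epsilon_n, 1]$ linearly interpolate from $(1-\epsilon_n, g_{n,m,r}(1-\epsilon_n))$ to $(1,1)$. Each $g_{n,m,r}$ is a continuous, non-decreasing function $[0,1]\to[0,1]$ with $g_{n,m,r}(0)=0$ and $g_{n,m,r}(1)=1$, hence a continuous adf.

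To verify the linear family identity, I would observe that both the averaging and the affine splice are linear in $f_{m,r}$ (the target value $1$ at $x=1$ is a constant that already satisfies $1 = \tfrac{1}{d}\sum_{i=0}^{d-1} 1$), so applying the construction to $f_{m,r} = \tfrac{1}{d}\sum_{i=0}^{d-1} f_{md,mi+r}$ yields the same identity with $g$ in place of $f$. For pointwise convergence I use that the adfs considered in this paper are left-continuous, since $f(x) = \lim_{n\to\infty} A_n([0,x),\omega)/n$ is automatically left-continuous in $x$. For fixed $x \in [0,1)$ and $n$ large enough that $x \leq 1-\epsilon_n$, monotonicity of $f_{m,r}$ gives
\[
f_{m,r}(x-\epsilon_n) \;\leq\; g_{n,m,r}(x) \;\leq\; f_{m,r}(x),
\]
and the left side tends to $f_{m,r}(x^-) = f_{m,r}(x)$; at $x=1$ we have $g_{n,m,r}(1) = 1 = f_{m,r}(1)$ by construction.

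For explicitness, the only nontrivial conditions are computability of $g_{n,m,r}(q)$ and of $\inf\br{x\in[0,1] : g_{n,m,r}(x)=q}$ at rational $q$, since continuity of $g_{n,m,r}$ makes the discontinuity conditions vacuous. The first follows from computing the integral by upper and lower Riemann sums on a rational partition of mesh $\delta$; the gap between these sums is bounded by $\delta\cdot(f_{m,r}(x)-f_{m,r}(x-\epsilon_n)) \leq \delta$, and each sum is computable from the given computable values of $f_{m,r}$ at rationals. The second follows by a standard bisection on $[0,1]$ using the approximable values of $g_{n,m,r}$ together with its continuity and monotonicity.

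The main obstacle I anticipate is the right boundary: the one-sided average $\tfrac{1}{\epsilon_n}\int_{1-\epsilon_n}^{1} f_{m,r}(y)\, dy$ need not equal $1$, so without the final affine splice $g_{n,m,r}$ would fail to be an adf. The correction works because the interpolation $(1-\lambda)\,g_{n,m,r}(1-\epsilon_n) + \lambda$ is a linear functional of $f_{m,r}$ plus an $f$-independent constant already consistent with the family identity. A secondary subtlety is that jump points of $f_{m,r}$ need not be rational, so a grid-based piecewise linear interpolation would fail to converge to the correct value at such points; the integral-based smoothing from the left avoids this by picking out $f_{m,r}(x^-)$, which equals $f_{m,r}(x)$ by left-continuity.
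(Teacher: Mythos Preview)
Your integral-smoothing construction is clean, but the proof of pointwise convergence rests on an assumption that does not hold. You claim the adfs here are left-continuous because $f(x)=\lim_n A_n([0,x),\omega)/n$ is; this is false. Take $\omega$ with half its terms equal to $0$ and the other half strictly increasing to $1/2$: then $f(x)=1/2$ for $0<x<1/2$ while $f(1/2)=1$, so $f(1/2^-)=1/2\neq f(1/2)$. More to the point, the lemma is stated for abstract explicit linear families of adfs, which by the paper's definition are merely non-decreasing with $f(0)=0$, $f(1)=1$ plus the computability conditions; nothing forces $f_{m,r}(t)=f_{m,r}(t^-)$ at a jump $t$. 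At any such $t$ your one-sided average gives $g_{n,m,r}(t)\to f_{m,r}(t^-)\neq f_{m,r}(t)$, so pointwise convergence fails exactly at the discontinuities. The paper's piecewise-linear interpolation avoids this by including every jump of size exceeding $1/n$ as a grid node and setting $g_{n,m,r}(t)=f_{m,r}(t)$ there, which is why the explicitness hypotheses demand that discontinuity locations and values be computable.

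There is a second, more technical gap in your explicitness argument: bisection locates \emph{some} solution of $g_{n,m,r}(x)=q$, not the infimum when $g_{n,m,r}$ is constant on an interval (which happens whenever $f_{m,r}$ is). Distinguishing $g_{n,m,r}(x)<q$ from $g_{n,m,r}(x)=q$ at a computable $x$ is an equality test between computable reals and is not decidable in general. The paper sidesteps this via the discrete grid: the infimum of any level set of the piecewise-linear $g_{n,m,r}$ is one of finitely many computable nodes $a^n_i$, so one simply inspects them.
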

\begin{proof}
We mimic the proof in \cite{KuN} that for any adf $f$, there exists a sequence of continuous adfs converging to $f$.
Define increasing sequences $a^n = \{a^n_i\}_{i=1}^\infty$ such that $a^n$ contains $\frac{i}{n+1}$ for $0\leq i \leq n$ and all $t$ so that
\begin{equation}\labeq{overnjump}
\lim_{x \to t^+} f_{m,r}(x) - \lim_{x \to t^-} f_{m,r}(x)>\frac{1}{n}.
\end{equation}
There are finitely many $t$ that satisfy \refeq{overnjump}, so $a^n$ is a finite sequence. Each element of $a^n$ is a computable real number as well. Note that $a^n_{i+1}-a^n_i < \frac{1}{n}$.

Let $g_{n,m,r}\pr{a^n_{i}} = f_{m,r}\pr{a^n_{i}}$ and piecewise linear between $a^n_{i}$ and $a^n_{i+1}$. Then $g_{n,m,r}$ is continuous, non-decreasing, $g_{n,m,r}(0)=0$, and $g_{n,m,r}(1)=1$. Note that for any $a^n_i$, since $\{f_{m,r}\}$ is a linear family of adfs, we have that 
$$
g_{n,m,r}(a^n_i) = \frac{1}{d} \sum_{i=0}^{d-1} g_{n,md, mi+r}(a^n_i)
$$
for all $d$. As $g_{n,m,r}$ is piecewise linear, we have that this equality holds for all $x \in [0,1]$. So $\{g_{n,m,r}\}$ is a linear family of adfs. As $g_{n,m,r}$ is continuous, we only need to check that $g_{n,m,r}(q)$ and $\inf \pr{g^{-1}_{n,m,r}(q)}$ are computable real numbers for all $q \in \mathbb{Q}$. We have that $a^n_i \leq q < a^n_{i+1}$ for some $i$, so 
$$
g_{n,m,r}(q) = \frac{f_{m,r}(a^n_{i+1})-f_{m,r}(a^n_{i})}{a^n_{i+1}-a^n_i} (q-a^n_i)+f_{m,r}(a^n_i)
$$ 
since $g_{n,m,r}$ is piecewise linear.
But the set of computable real numbers is closed under the usual field operations of the reals (see \cite{Weihrauch}), so the real number $g_{n,m,r}(q)$ is computable. If $g^{-1}_{n,m,r}(q)$ consists of a single point, say $r$, then we have that $a^n_{i} \leq r \leq a^n_{i+1}$ and 
$$
r = \pr{q-f_{m,r}(a^n_i)}\frac{a^n_{i+1}-a^n_i}{f_{m,r}(a^n_{i+1})-f_{m,r}(a^n_{i}))}+a^n_i,
$$ 
which is a computable real number. If $g^{-1}_{n,m,r}(q)$ does not consist of a single point, we have that there are maximum and minimum integers $i$ and $j$ such that $g_{n,m,r}\pr{a^n_i} = g_{n,m,r}\pr{a^n_{j}} = q$. Since $g_{n,m,r}\pr{a^n_{i-1}} \neq q$, for any $a^n_{i-1} \leq x < a^n_i$ we have that $g_{n,m,r}(x)<q$. So the real number $\inf\br{g^{-1}_{n,m,r}(q)} = a^n_i$  is computable. Thus $g_{n,m,r}$ is an explicit linear family of adfs.

To see that $g_{n,m,r}$ converges pointwise to $f_{m,r}$, let $t \in [0,1]$.  If $t$ is a discontinuity of $f_{m,r}$, then $\lim_{x \to t^+} f_{m,r}(x) - \lim_{x \to t^-} f_{m,r}(x)>0$, which implies that for some $n_0$ we have that $\lim_{x \to t^+} f_{m,r}(x) - \lim_{x \to t^-} f_{m,r}(x)>\frac{1}{n_0}$. Then $g_{n,m,r}(t)=f_{m,r}(t)$ for $n>n_0$. Now suppose $t$ is not a discontinuity of $f_{m,r}$. Let $\epsilon>0$. Then for some $n_0$, if $x \in \pr{t-\frac{1}{n_0} , t+\frac{1}{n_0}}$, then $f_{m,r}(x) \in \pr{f_{m,r}(t)-\epsilon, f_{m,r}(t)+\epsilon}$. For some $i$, we have that $a^{n_0}_i \leq t \leq a^{n_0}_{i+1}$. As $a^{n_0}_{i+1}-a^{n_0}_i <\frac{1}{n_0}$, we know that $a^{n_0}_i, a^{n_0}_{i+1} \in \pr{t-\frac{1}{n_0} ,t+\frac{1}{n_0}}$. Thus $f_{m,r}\pr{a^{n_0}_i}>f_{m,r}\pr{t}-\epsilon$ and $f_{m,r}\pr{a^{n_0}_{i+1}}<f_{m,r}(t)+\epsilon$. But since $g_{n_0,m,r}\pr{a^{n_0}_i}=f_{m,r}\pr{a^{n_0}_i}$ for all $i$ and $g_{n_0,m,r}\pr{a^{n_0}_i} \leq g_{n_0,m,r}(t)\leq g_{n_0,m,r}\pr{a^{n_0}_{i+1}}$, then we have that $f_{m,r}(t)-\epsilon<g_{n_0,m,r}(t)<f_{m,r}(t)+\epsilon$. Thus $g_{n,m,r}$ converges pointwise to $f_{m,r}$.
\end{proof}

The following result by S. Tengely \cite{Tengely} is useful for constructing an explicit set of polynomials and proving \refmt{Main}.
\begin{thrm}[S. Tengely]\labt{Tengely}
Let $p,q \in \mathbb{Z}[X]$ be monic polynomials with $\deg p = n \leq \deg q = m$ such that $p(X)-q(Y)$ is irreducible in $\mathbb{Q}[X,Y]$ and $\gcd(n,m)>1$. Let $d>1$ be a divisor of $\gcd(n,m)$. If $(x,y) \in \mathbb{Z}^2$ is a solution of the Diophantine equation $p(x)=q(y)$, then
\begin{equation*}
\max \{|x|,|y| \} \leq d^{\frac{2m^2}{d}-m}\left (m+1\right )^\frac{3m}{2d} \left (m/d+1\right )^\frac{3m}{2} \left (h+1\right )^{\frac{m^2+mn+m}{d}+2m},
\end{equation*}
where $h = \max \left \{ H(p),H(q)\right \}$ and $H(\cdot)$ denotes the maximum of the absolute values of the coefficients.
\end{thrm}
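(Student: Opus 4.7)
The plan is to apply an effective version of Runge's method adapted to the divisibility hypothesis $d \mid \gcd(n,m)$. Write $n = da$ and $m = db$. The first step is to construct monic polynomials $P, Q \in \mathbb{Q}[X]$ with $\deg P = a$ and $\deg Q = b$ such that the residuals
\[
r_1(X) := p(X) - P(X)^d, \qquad r_2(X) := q(X) - Q(X)^d
\]
have degrees strictly less than $n - a$ and $m - b$ respectively. Such $P$ and $Q$ are obtained by formally extracting the $d$-th root of $p$ and $q$, expanding truncated binomial series coefficient by coefficient and clearing denominators. The heights $H(P)$ and $H(Q)$ can be controlled polynomially in $h$ with a multiplicative constant depending only on $m$, $n$, and $d$; this combinatorial bookkeeping is what will be responsible for the $(h+1)^{\cdot}$ factor in the final estimate.

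Next, rewrite the equation $p(x) = q(y)$ as
\[
P(x)^d - Q(y)^d = r_2(y) - r_1(x),
\]
and factor the left-hand side over the cyclotomic field $\mathbb{Q}(\zeta_d)$:
\[
\prod_{j=0}^{d-1} \bigl(P(x) - \zeta_d^j Q(y)\bigr) = r_2(y) - r_1(x).
\]
If $\max(|x|, |y|)$ exceeds the claimed bound, the right-hand side has absolute value negligible compared to $|P(x)|^d$ and $|Q(y)|^d$. Consequently at least one factor $|P(x) - \zeta_d^j Q(y)|$ must be extremely small. When $j = 0$, this forces $|P(x) - Q(y)|$ to be small, and since $P-Q$ has degree at most $\max(a,b) \le m/d$, a quantitative Runge inequality bounds $\max(|x|,|y|)$ explicitly. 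When $j \neq 0$, one argues in $\mathbb{Z}[\zeta_d]$: the hypothesis that $p(X) - q(Y)$ is irreducible in $\mathbb{Q}[X,Y]$ ensures each factor $P(X) - \zeta_d^j Q(Y)$ remains absolutely irreducible, so that a refined Runge argument (or, as backup, an $S$-unit / linear-forms-in-logarithms estimate) again produces explicit bounds.

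The main obstacle, as I see it, is not the structural idea but the explicit bookkeeping. One has to (i) bound the heights of $P, Q$ and their conjugate combinations $P - \zeta_d^j Q$ in terms of $h$, $m$, $n$, $d$; (ii) verify that the threshold above which the product estimate forces one factor to be small can be made uniform in $j$; and (iii) track the constants in each invocation of Runge's method so that the final product bound assembles into the exact expression $d^{2m^2/d - m}(m+1)^{3m/(2d)}(m/d+1)^{3m/2}(h+1)^{(m^2+mn+m)/d + 2m}$. Each exponent in the displayed estimate corresponds to a specific source: the $d^{\cdot}$ and $(m+1)^{\cdot}$ factors arise from the binomial root extraction and from conjugate combinations, the $(m/d+1)^{\cdot}$ factor from the Runge inequality applied to $P - Q$, and the $(h+1)^{\cdot}$ factor from propagating the height of the original coefficients through both constructions. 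Matching these precisely is the delicate part of the argument.
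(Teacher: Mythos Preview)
The paper does not prove this theorem at all: it is quoted from Tengely's thesis \cite{Tengely} and used as a black box in the construction of an explicit sparsely intersecting set of polynomials. There is therefore no proof in the paper to compare against.

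That said, your outline is the right one and matches Tengely's actual argument: extract approximate $d$-th roots $P,Q$ of $p,q$, factor $P(x)^d - Q(y)^d$ over $\mathbb{Q}(\zeta_d)$, and run an effective Runge-type argument on the factors. Your proposal is a sketch rather than a proof, and you candidly flag the bookkeeping of the explicit constants as the real work; that is accurate. If you want a self-contained write-up you would need to carry out the height estimates for the truncated $d$-th roots and the Runge step in full, but for the purposes of this paper none of that is needed --- the authors simply invoke the bound.
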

 We prove the following theorem to show that the second part of \refmt{Main} is not vacuous.

\begin{lem}
There is a sparsely intersecting explicit set of polynomials that contains $p(X)=X$.
\end{lem}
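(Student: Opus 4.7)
The plan is to effectivize the construction from the proof of \refl{iscomplete}. I enumerate $\mathbb{Z}[X]$ computably, arranging for $p(X) = X$ to appear first (for instance, by listing at stage $n$ all polynomials of degree at most $n$ and coefficient magnitude at most $n$ in lexicographic order). I then construct $P$ inductively: when processing $f_i$, I add it to $P$ if and only if $f_i \not\approx p$ for every $p$ already in $P$. The resulting set contains $X$ and pairwise satisfies $p \not\approx q$, so by \refl{SparseIntersect} and the counting arguments in its proof, $P$ is sparsely intersecting.

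For the inductive step to be computable, I must decide effectively whether $f_i \approx p$. By \refl{SparseIntersect}, this amounts to searching for linear polynomials $\mu(X) = aX + b$ and $\lambda(X) = cX + d$ in $\mathbb{Z}[X]$ satisfying $f_i \circ \mu = p \circ \lambda$. Degree considerations force $\deg f_i = \deg p = k$, and matching the leading two coefficients after composition reduces the candidate pairs $(a, c)$ and then $(b, d)$ to finitely many integer possibilities; the remaining coefficient comparisons form a polynomial system over $\mathbb{Z}$ that can be checked in bounded time.

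To promote $P$ to an explicit sparsely intersecting set, I exhibit, for every pair $p, q \in P$, a computable sequence $\{N(m)\}$ such that $n - s > N(m)$ forces $\delta(p,q,s,n) < 1/m$ or $\delta(q,p,s,n) < 1/m$. When $\deg p \neq \deg q$, the asymptotic estimate from the proof of \refl{SparseIntersect} supplies $N(m)$ with constants depending computably on the leading coefficients. When $\deg p = \deg q = k$ and $p \not\approx q$, I apply \reft{BiluTichy} to $p(x) = q(y)$. The standard-pair classification combined with equal degrees and $p \not\approx q$ leaves two possibilities: either the Bilu--Tichy hypothesis fails, so only finitely many integer solutions exist and their magnitudes are bounded explicitly by \reft{Tengely} after a standard monic normalization; or it is realized by standard pairs of type 2 or 4, which forces $k = 2$ and reduces $p(x) = q(y)$ to a generalized Pell equation whose integer solutions up to $n$ number $O(\log n)$ with explicit constants in the coefficients of $p$ and $q$. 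In each case $\#(p(\mathbb{N}) \cap q(\mathbb{N}) \cap \{s, \dots, n\})$ admits an explicit upper bound, while $\#(q(\mathbb{N}) \cap \{s, \dots, n\}) = \Theta(n^{1/k} - s^{1/k})$ admits an effective lower bound; their quotient produces the desired $N(m)$.

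The principal technical obstacle is navigating the same-degree Bilu--Tichy cases: deciding which standard-pair template (if any) is compatible with the specific $p$ and $q$, and extracting the corresponding effective bound. The classification in \reft{BiluTichy} is, however, a finite case analysis parametrized by the coefficients of $p$ and $q$, each subcase admits an effective test, and the bounds from \reft{Tengely} and classical Pell theory are explicit in those coefficients. Iterating the inductive construction therefore yields a sparsely intersecting explicit set of polynomials containing $p(X) = X$, as required.
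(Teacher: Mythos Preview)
Your approach aims for more than the lemma requires: you effectivize the full saturated construction of \refl{iscomplete}, retaining every polynomial not $\approx$-equivalent to an earlier one and then verifying explicitness case by case through Bilu--Tichy. The paper instead takes a shortcut. Starting from the same enumeration with $p_1 = X$, it simply \emph{discards} any later $p_i$ that fails to satisfy the hypotheses of \reft{Tengely} with some previously retained $p_j$ ($j>1$). For the surviving pairs, \reft{Tengely} directly gives a computable bound $M$ on all integer solutions of $p_i(x)=p_j(y)$, from which $N(m)$ is immediate; for pairs involving $p_1=X$ an elementary degree-count estimate suffices. This sacrifices saturation (which the lemma does not ask for) but avoids the Bilu--Tichy case analysis entirely.

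Your route is plausible in outline, but the same-degree analysis as written has gaps. The claim that equal-degree standard pairs of type~2 or~4 force $k=2$ is incorrect: the Bilu--Tichy decomposition is $p=\phi\circ f\circ u$ with the inner $f$ quadratic, so $\deg p = 2\deg\phi$ can be any even integer, and it is the \emph{inner} equation $f(u(x))=g(v(y))$ (not $p(x)=q(y)$ itself) that reduces to Pell form. You also invoke \reft{Tengely} whenever Bilu--Tichy yields only finitely many solutions, but Tengely carries its own hypotheses---monic polynomials and irreducibility of $p(X)-q(Y)$ over $\mathbb{Q}$---and you do not say what happens when irreducibility fails. These defects are repairable, but the paper's discard strategy sidesteps them altogether.
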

\begin{proof}
We proceed with the construction in the proof of \refl{iscomplete} exactly as before, creating an ordering of all polynomials. Note that by construction, $p_1$ is the identity polynomial. As before, at step $i>1$ we check if $p_i$ and $p_j$ satisfy the properties of \reft{Tengely} for $1<j<i$. If they do, then there is a computable  bound $M$ on the absolute values of solutions to $p_i(x)=p_j(y)$. Thus if we set $N(m) = \frac{M}{m}$, we have that if $n-s>N(m)$, then $\de(p_i, p_j, s, n) < m$ since $\de(p_i, p_j, s, n) < \frac{M}{n}$.

This ensures sparse intersection and explicitness when we do not consider $p_1$. Suppose $j=1<i$ and $p_i(n)=m$. Writing $p_i(n) = a_k n^k + \cdots + a_0$, set $a^* = \max \{ a_k, \cdots, a_0 \}$. Note that if $n> \frac{2ka^*}{a_k}$, then $p_i(n) > \frac{1}{2} a_k n^k$. Thus 
$$
\de(p_i,p_1,s,n) \leq \frac{2ka^*}{a_k n}+\sqrt{\frac{2}{a_k n}}
$$ 
for $n > \frac{2ka^*}{a_k}$. Set $N(m) =\ceil{ \max \left\{ \frac{4ka^*m}{a_k}, \frac{8m^2}{a_k} \right\} }$. Therefore $N(m)$ is a computable sequence. Moreover, if $n-s > N(m)$, then $d_{p_i, p_1, s, n} < \frac{1}{2m} + \frac{1}{2m} = \frac{1}{m}$.

If $p_i$ and $p_j$ do not satisfy the conditions of \reft{Tengely} for any $ 1 < j < i$, we remove $p_i$ from $P$ and relabel $p_{i+1}$ to $p_i$ and so on. Let $P_i$ be the resulting set of this procedure conducted for $1\leq j \leq i$. Set $P = \bigcap P_i$.
\end{proof}

\subsection{Homogeneous Moran set structure}
We will construct a subset $\Phi'_{Q,P,F}$ of $\Phi_{Q,P,F}$ so that $\Phi'_{Q,P,F}$ has the structure of a homogeneous Moran set and full Hausdorff dimension. Let $\{ n_k \}$ be a sequence of positive integers and $\{c_k\}$ be a sequence of positive numbers such that $n_k \geq 2$, $0<c_k<1$, $n_1 c_1 \leq d$, and $n_k c_k \leq 1$, where $d$ is a positive real number. For any $k$, let $D_k = \{ (i_1, \cdots, i_k): 1\leq i_j \leq n_j, 1\leq j \leq k \}$, and $D = \bigcup D_k$, where $D_0 =\emptyset$. If $\sigma = ( \sigma_1, \cdots , \sigma_k) \in D_k$, $\tau = (\tau_1 ,\cdots , \tau_m) \in D_m$, put $\sigma * \tau = (\sigma_1, \cdots , \sigma_k, \tau_1, \cdots , \tau_m)$.

\begin{definition}
Suppose $J$ is a closed and bounded interval.  The collection of closed subintervals $ \mathcal{F} = \{ J_\sigma : \sigma \in D\}$ of $J$ has \textbf{homogeneous Moran structure}~if:
\begin{enumerate}
	\item $J_{\emptyset} = J$;
	\item $\forall k \geq 0, \sigma \in D_k, J_{\sigma *1}, \cdots , J_{\sigma * n_{k+1}}$ are subintervals of $J_\sigma$ and $\mathring{J}_{\sigma*i}\cap \mathring{J}_{\sigma*j}=\emptyset$ for $i \neq j$;
	\item $\forall k \geq 1, \forall \sigma \in D_{k-1}, 1\leq j \leq n_k$, $c_k = \frac{\lambda(J_{\sigma*j})}{\lambda(J_\sigma)}$.
\end{enumerate}
\end{definition}

Suppose that $\mathcal{F}$ is a collection of closed subintervals of $J$ having homogeneous Moran structure. Let $E(\mathcal{F}) = \bigcap_{k\geq 1} \bigcup_{\sigma \in D_k} J_\sigma$. We say $E(\mathcal{F})$ is a \textbf{homogeneous Moran set determined by} $\mathcal{F}$, or it is a \textbf{homogeneous Moran set determined by} $J$, $\{ n_k \}$, $\{ c_k \}$.  We will need the following theorem of D. Feng, Z. Wen, and J. Wu from \cite{FengWenWu}.

\begin{thrm}[D. Feng, Z. Wen, and J. Wu]
If $S$ is a homogeneous Moran set determined by $J$, $\{n_k \}$, $\{ c_k \}$, then
\begin{equation*}
\dimh{S} \geq \liminf_{k \to \infty} \frac{\log(n_1 n_2 \cdots n_k)}{-\log(c_1 c_2 \cdots c_{k+1} n_{k+1})}.
\end{equation*}
\end{thrm}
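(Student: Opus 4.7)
The plan is to prove this lower bound on Hausdorff dimension via the mass distribution principle: construct a natural probability measure $\mu$ on the Moran set $S$ and show that for every $t$ strictly below the liminf on the right-hand side, there is a constant $C_t$ with $\mu(B(x,r)) \leq C_t r^t$ for all $x\in S$ and sufficiently small $r>0$.

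First I would define $\mu$ to be the uniform ``cylinder'' measure: for each $\sigma \in D_k$, set $\mu(J_\sigma) = (n_1 n_2 \cdots n_k)^{-1}$. Consistency of these values across levels follows from the Moran structure, so $\mu$ extends uniquely to a Borel probability measure supported on $S = \bigcap_k \bigcup_{\sigma\in D_k} J_\sigma$. Write $\ell_k := c_1 c_2 \cdots c_k |J|$ for the common length of a level-$k$ interval. Given $r>0$ small, I would choose $k = k(r)$ to be the unique integer with $\ell_{k+1} \leq r < \ell_k$. Since the $n_1 \cdots n_k$ level-$k$ intervals are pairwise interior-disjoint closed subintervals of $J$ of length $\ell_k > r$, a ball $B(x,r)$ of diameter $2r < 2\ell_k$ meets at most $3$ level-$k$ intervals. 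Inside each such level-$k$ interval $J_\sigma$, the $n_{k+1}$ level-$(k+1)$ subintervals are interior-disjoint and of length $\ell_{k+1}$, so $B(x,r)$ meets at most $\min\bigl(n_{k+1},\; 2r/\ell_{k+1} + 2\bigr)$ of them.

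Combining these gives the key estimate
\begin{equation*}
\mu(B(x,r)) \;\leq\; \frac{3}{n_1 \cdots n_{k+1}}\cdot \min\!\left(n_{k+1},\; \frac{2r}{\ell_{k+1}} + 2\right).
\end{equation*}
I would split into two cases. When $n_{k+1} \leq 2r/\ell_{k+1} + 2$, using the first term of the minimum yields $\mu(B(x,r)) \leq 3/(n_1\cdots n_k)$, while $r \geq \tfrac{1}{2}(n_{k+1}-2)\ell_{k+1} \gtrsim n_{k+1}\ell_{k+1} = c_1\cdots c_{k+1} n_{k+1}|J|$. When $n_{k+1} > 2r/\ell_{k+1} + 2$, using the second term gives $\mu(B(x,r)) \leq 12 r / (\ell_{k+1} n_1 \cdots n_{k+1})$, and bounding $r^{1-t}$ trivially by $(n_{k+1}\ell_{k+1}/2)^{1-t}$ (valid since $t<1$ and $r < n_{k+1}\ell_{k+1}/2$) reduces this to the same form, up to a multiplicative constant:
\begin{equation*}
\mu(B(x,r)) \;\leq\; \frac{C}{(c_1 \cdots c_{k+1} n_{k+1}|J|)^t \cdot n_1 \cdots n_k}\cdot r^t.
\end{equation*}
By the definition of the liminf, for any $t$ strictly less than $\liminf_{k\to\infty} \log(n_1\cdots n_k)/(-\log(c_1\cdots c_{k+1}n_{k+1}))$, the prefactor $(c_1\cdots c_{k+1}n_{k+1}|J|)^{-t}/(n_1\cdots n_k)$ is bounded uniformly in $k$. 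Hence $\mu(B(x,r)) \leq C_t r^t$ for small $r$, and the mass distribution principle gives $\dim_H(S) \geq t$; letting $t$ approach the liminf completes the proof.

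The main obstacle I anticipate is the bookkeeping in the case analysis: the presence of the factor $n_{k+1}$ in the denominator of the target bound is exactly what forces the ``minimum'' estimate on how many level-$(k+1)$ pieces a ball can hit, and one must carefully track why the alternative estimate $\min(n_{k+1},\;2r/\ell_{k+1}+2) \lesssim n_{k+1} \cdot (r/(n_{k+1}\ell_{k+1}))^t$ holds in both regimes so that the two cases combine into a single uniform bound. Beyond this, the argument is routine.
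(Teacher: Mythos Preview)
The paper does not actually prove this theorem: it is stated as a quotation of a result of D.~Feng, Z.~Wen, and J.~Wu and cited to their paper, with no proof supplied here. So there is no ``paper's own proof'' to compare against.

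That said, your proposal is correct and is precisely the standard mass distribution argument used for such Moran-set lower bounds (and is, in outline, the argument in the Feng--Wen--Wu source). A couple of minor points worth tightening when you write it out in full: in Case~1 the inequality $r \geq \tfrac{1}{2}(n_{k+1}-2)\ell_{k+1}$ degenerates when $n_{k+1}\in\{2,3\}$, but you can always fall back on $r\geq \ell_{k+1}\geq \tfrac{1}{4}n_{k+1}\ell_{k+1}$ in that range, so the estimate $r\gtrsim n_{k+1}\ell_{k+1}$ survives with constant $1/4$; and your use of $r^{1-t}\leq (n_{k+1}\ell_{k+1}/2)^{1-t}$ requires $t\leq 1$, which does hold since $n_j c_j\leq 1$ forces the liminf on the right-hand side to be at most~$1$. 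With those remarks the two cases indeed collapse to the single bound $\mu(B(x,r))\leq C\,r^t\big/\big((c_1\cdots c_{k+1}n_{k+1}|J|)^t\, n_1\cdots n_k\big)$, and the liminf hypothesis makes the prefactor bounded in~$k$, so the mass distribution principle finishes it.
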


\subsection{The construction}\labs{construction}
The construction given in this section will be related to the constructions given by the second author in \cite{Mance7, Mance3}.
Suppose that we are given a computable and computably growing basic sequence $Q = \{q_n\}$, an explicit set of sparsely intersecting polynomials $P = \{p_n\}$
, and a set of explicit linear families of upper and lower adfs $F$ defined by \refeq{Fadf}.
Let 
$$
\br{\br{ \overline{g}_{n,p,m,r} }_{m \in \mathbb{N}, 0\leq r < m}}_{n=1}^\infty \hbox{ and } \br{\br{  \underline{g}_{n,p,m,r} }_{m \in \mathbb{N}, 0\leq r < m}}_{n=1}^\infty
$$ 
be sequences of explicit linear families of continuous adfs where $\{\overline{g}_{n,p,m,r}\}_{n=1}^\infty$ and   $\{\underline{g}_{n,p,m,r}\}_{n=1}^\infty$ converge pointwise to $\overline{f}_{p,m,r}$ and $\underline{f}_{p,m,r}$, respectively. For 
$$
I(j,k)=\left[\frac{j} {k},\frac {j+1} {k}\right),
$$ 
we define
\begin{align*}
\Delta_k = \min_{\substack{1 \leq l \leq k\\ 0\leq r < k!\\ 0 \leq j < k}}  \left \{ \min  \left \{ \lambda \left (\overline{g}_{k, p_l, k!, r}^{-1} \big (I(j,k) \right ),  \lambda \left ( \underline{g}_{k, p_l, k!, r}^{-1} \big ( I(j,k)\big ) \right ) \right \}   \right \}
\end{align*}
with $\lambda$ the Lebesgue measure.  Note that 
$$
\lambda \left (\overline{g}_{k, p_l, k!, r}^{-1} \left ( I(j,k)\right ) \right ) 
= \sup \overline{g}_{k,p_l,k!,r}^{-1} \left ( \frac{j+1}{k}\right ) -\inf  \overline{g}_{k,p_l,k!,r}^{-1} \left ( \frac{j}{k}\right ),
$$
so is a computable real number as it is the difference of two computable real numbers. This means that $\Delta_k$ is a computable real number since we are taking finitely many maximums and minimums of computable real numbers.  Define
\begin{align*}
\epsilon_k &= \frac{\min \br{ \log(q_k)^{\frac{1}{2}}, \log(q_1 \cdots q_{k-1})^{\frac{1}{2}}}}{\log(q_k)};\\
\nu_{j,1} &= \min \left \{t \in \mathbb{N} : \min \left \{ \log(q_k)^{\frac{1}{2}}, \log(q_1 \cdots q_{k-1})^{\frac{1}{2}}\right \} \geq \log(4)-\log( \Delta_j) ,\forall k \geq t ,     \right \};\\
\nu_{j,2} &= \min \left \{ t \in \mathbb{N} : \de(p_k,p_l,1,n) < \frac{1}{(j!)^2 j^3} \forall 1\leq l<k \leq j, \forall n>t  \right \}.
\end{align*}
We have that $\br{\nu_{j,1}}$ is a computable sequence as $\Delta_k$ is a computable real number and $Q$ is a computably growing basic sequence. We also have that $\br{\nu_{j,2}}$ is a computable sequence since $P$ is an explicit sparsely intersecting set of polynomials and by \reft{Tengely}. Finally, set
\begin{equation*}
\nu_j = \max \{ \nu_{j,1}, \nu_{j,2} \}.
\end{equation*}
We will define sequences of integers $\{l_j\}$ and $\{L_j\}$ inductively. Set
\begin{align*}
& l_1 = \max \{ \nu_2-1, 1\}; \\
& \xi_{j,k,l,t} = \de(p_k,p_l,L_{j-1}+j!jt,L_{j-1}+j!jt+n);\\
&\psi_j = \min\bigg\{t \in \mathbb{N} : \xi_{j,k,l,t}
 < \frac{1}{(j+1)!^2 (j+1)^3} \ \forall  n\geq (j-1)! t \wedge k,l < j+1 \bigg\}; \\
&l_j =  \max \left\{\min \left \{ t \in \mathbb{N} : L_{j-1} + j! j t \geq \nu_j-1\right \} , \psi_j, j^2 \right \};\\
&L_j = \sum_{i=1}^j i!i l_j.
\end{align*}

Clearly, the sequence $\{\nu_j\}$ is computable since the sequences $\br{\nu_{j,1}}$ and $\br{\nu_{j,2}}$ are computable. The sequence $\br{\psi_j}$ is computable since $P$ is explicit.  Thus the sequences $\{l_j\}$ and $\{L_j\}$ are also computable.

Let 
$$
U = \{ (i,b,c,d) \in \mathbb{N}^4 : b\leq l_i, c \leq i, d \leq i!\}.
$$ 
Define $\Xi_l : U \to p_l(\mathbb{N})$ by 
$$
\Xi_l(i,b,c,d) = p_l(L_{i-1}+b i!i+ c i! + d).
$$
It is easy to show that $\Xi_l$ is a bijection.

Put $P'_i = \{ p_{\sigma(j)}\}_{j=1}^{i}$ where $\sigma$ is a permutation of $\{1,\cdots, n\}$ such that if $\sigma(i) < \sigma(j)$, then $p_j \lnsim p_i$. We can find such a $\sigma$ since $P$ is sparsely intersecting. Reorder $P_i$ so that it is equal to $P'_i$.
Define 
\begin{align*}
i(n) &= \max \{ t : n>p_{l}(L_t), \forall l\leq t\};\\
\dg (n) &= \max \br{ j : n \in p_j(\mathbb{N}), p_j \in P_{i(n)}}.
\end{align*}
Moreover, define $b(n)$, $c(n)$, and $d(n)$ by
$$(i(n), b(n), c(n), d(n)) = \Xi_{\dg(n)}^{-1}(n).
$$ 
These functions are defined as $\Xi_l$ is a bijection. Define the sets
\begin{align*}
V^*_{l,n} &= 
\begin{cases}
	[0,q_n) & \text{ if }  l<i \\
	q_n \text{ } \overline{g}_{i(n),l,i(n)!,d(n)}^{-1}\pr{\left[\frac{c(n)}{i(n)}, \frac{c(n)+1}{i(n)}\right)} & \text{ if } i(n) \equiv 0 \pmod 2, l\geq i(n) > 1 \\
	q_n \text{ } \underline{g}_{i(n),l,i(n)!,d(n)}^{-1}\pr{\left[\frac{c(n)}{i(n)}, \frac{c(n)+1}{i(n)}\right)} & \text{ if } i(n) \equiv 1 \pmod 2, l\geq i(n) > 1
\end{cases};\\
V_{l,n} &= \left[\inf(V^*_{l,n}), \inf(V^*_{l,n})+q_{n}\Delta_{i(n)}-q_n-1\right] \cap \mathbb{Z}.
\end{align*}
Note that we can write $V_{\dg(n),n} = \{ \alpha(n), \alpha(n)+1, \cdots , \beta(n) \}$, where $\{\alpha(n)\}$ and $\{\beta(n)\}$ are sequences of integers.  Moreover, 
\begin{align*}
\alpha(n) &= \ceil{\inf\left(V^*_{\dg(n),n}\right)};\\
\beta(n) &= \floor{\inf\left(V^*_{\dg(n),n}\right)+q_n\Delta_{i(n)}-q_n-1}.
\end{align*}
 We will discuss the computability of these sequences in the proof of \refmt{Main}. Set
\begin{equation*}
\Phi'_{Q,P,F} := \br{ x \in [0,1) : E_n\in V_{\dg(n),n}}.
\end{equation*}

We will now work towards proving the second part of \refmt{Main}.

\begin{lem} \labl{contadf}
A sequence $\{x_n \}$ has a continuous adf $f$ if and only if the sequence $\{ f(x_n) \}$ is u.d. mod 1.
\end{lem}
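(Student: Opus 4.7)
The plan is to exploit the fact that a continuous non-decreasing $f:[0,1]\to[0,1]$ with $f(0)=0$ and $f(1)=1$ is surjective, so for each $y \in [0,1]$ one can locate a point $a_y$ with $f(a_y)=y$ that controls the preimage $f^{-1}([0,y))$. First I would set $a_y := \inf\{t \in [0,1] : f(t) \geq y\}$ for $y \in (0,1]$ (with $a_0 := 0$); monotonicity and continuity give $f(a_y)=y$ and
\begin{equation*}
\{t \in [0,1] : f(t) < y\} = [0, a_y).
\end{equation*}
Consequently, for every index $i$ the conditions $f(x_i) < y$ and $x_i < a_y$ are equivalent, so
\begin{equation*}
A_n([0,y), \{f(x_i)\}) = A_n([0,a_y), \{x_i\}).
\end{equation*}

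For the forward direction, assuming $\{x_n\}$ has adf $f$, I would divide the identity above by $n$ and let $n\to\infty$: the right-hand side tends to $f(a_y)=y$, so $\{f(x_n)\}$ has adf $y\mapsto y$ and is therefore u.d. mod $1$.

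For the backward direction, assume $\{f(x_n)\}$ is u.d. mod $1$, fix $s \in [0,1]$, and set $y := f(s)$. The only real obstacle is that $f$ may be constant equal to $y$ on some maximal interval $[a,b]$, in which case $x_i < s$ is \emph{not} equivalent to $f(x_i) < y$. I would deal with this by also introducing $b := \sup\{t \in [0,1] : f(t)\leq y\}$, so that $a \leq s \leq b$ and the conditions $x_i < a$ and $x_i \leq b$ are respectively equivalent to $f(x_i) < y$ and $f(x_i) \leq y$. This produces the sandwich
\begin{equation*}
A_n([0,y), \{f(x_i)\}) \leq A_n([0,s), \{x_i\}) \leq A_n([0,y], \{f(x_i)\}).
\end{equation*}
Divided by $n$, the left-hand side tends to $y$ by hypothesis, and the right-hand side equals the left-hand side plus $\#\{i \leq n : f(x_i) = y\}/n$, which also tends to $y$ since uniform distribution mod $1$ forces any singleton to have empirical density zero. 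The squeeze then yields $A_n([0,s), \{x_i\})/n \to f(s)$, so $\{x_n\}$ has adf $f$.
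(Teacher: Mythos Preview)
Your proof is correct and follows essentially the same strategy as the paper's: relate the counts $A_n([0,\cdot),\{x_i\})$ and $A_n([0,\cdot),\{f(x_i)\})$ via the monotonicity and surjectivity of $f$, and use that singletons have empirical density zero to pass between closed and half-open intervals. If anything, your treatment of the backward direction is more careful: the paper asserts the equivalence $x_n\le\gamma\Leftrightarrow f(x_n)\le f(\gamma)$, which fails when $f$ has a flat interval, whereas your sandwich with $a=\inf f^{-1}(y)$ and $b=\sup f^{-1}(y)$ handles that case cleanly.
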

\begin{proof}
Since $f$ is non-decreasing, $x_n\leq \gamma$ if and only if $f(x_n) \leq f(\gamma)$. Thus 
$$
\frac{A_n([0,\gamma], x_n)}{n} = \frac{A_n([0,f(\gamma)], f(x_n))}{n}.
$$ 
But  $\{x_n\}$ has continuous adf $f$, so 
$$
\lim_{n\to \infty} \frac{A_n([\gamma,\gamma], x_n)}{n} = f(\gamma^+)-f(\gamma^-) = 0.
$$ 
Thus 
$$
\lim_{n \to \infty} \frac{A_n([0,\gamma), x_n)}{n} = \lim_{n \to \infty} \frac{A_n([0,\gamma],x_n)}{n}.
$$ 
Similarly we have that 
$$
\lim_{n\to \infty} \frac{A_n([0,f(\gamma)), f(x_n))}{n} = \lim_{n\to \infty} \frac{A_n([0,f(\gamma)], f(x_n))}{n}
$$ 
since $\lambda([\gamma,\gamma]) = 0$. 
Therefore 
$$
\lim_{n \to \infty} \frac{A_n([0,\gamma), x_n)}{n}-f(\gamma) = \lim_{n\to \infty} \frac{A_n([0,f(\gamma)], f(x_n))}{n}-f(\gamma).
$$ 
As $f$ is continuous, it must map $[0,1]$ onto $[0,1]$, so this second limit satisfies the definition for uniform distribution mod $1$. These limits converge to $0$ if and only if the other does, and we are done.
\end{proof}

\begin{lem}\labl{SameADF}
Suppose that $Q$ is a basic sequence that is infinite in limit, $f : \mathbb{N} \to \mathbb{N}$ is an eventually increasing function, and  both $\OQfmrxp{f}$ and $\br{ \frac {E_{f(mn+r)}+1} {q_{f(mn+r)}} }$ have the same upper and lower adfs. Then $\OQfmrx{f}$ has the same upper and lower adfs as $\OQfmrxp{f}$.
\end{lem}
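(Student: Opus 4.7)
The plan is a sandwich argument. The only mathematical input needed is the pointwise identity
\[
T_{Q,n}(x) \;=\; \sum_{k=1}^{\infty} \frac{E_{n+k}}{q_{n+1} \cdots q_{n+k}} \;=\; \frac{E_{n+1}}{q_{n+1}} + \frac{1}{q_{n+1}}\, T_{Q,n+1}(x),
\]
which, combined with $T_{Q,n+1}(x) \in [0,1)$, yields the bound
\[
\frac{E_{n+1}}{q_{n+1}} \;\leq\; T_{Q,n}(x) \;<\; \frac{E_{n+1}+1}{q_{n+1}} \qquad \text{for every } n \geq 0.
\]

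Applying this estimate along the indices defining $\OQfmrx{f}$, and absorbing the one-step shift between the $E_{n+1}$ appearing in the bound and the $E_n$ appearing in the digit-notation of $\OQfmrxp{f}$ (a shift of a single term alters no upper or lower adf), write $a_k$, $g_k$, and $b_k$ for the $k$-th terms of $\OQfmrxp{f}$, $\OQfmrx{f}$, and $\br{(E_{f(mk+r)}+1)/q_{f(mk+r)}}$ respectively; then $a_k \leq g_k < b_k$ for every $k$. Since the indicator of $[0,\gamma)$ is nonincreasing in its argument, this gives, for every $\gamma \in [0,1]$ and every $N \in \mathbb{N}$,
\[
A_N\bigl([0,\gamma), \br{b_k}\bigr) \;\leq\; A_N\bigl([0,\gamma), \br{g_k}\bigr) \;\leq\; A_N\bigl([0,\gamma), \br{a_k}\bigr).
\]

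Dividing by $N$ and taking $\liminf_{N\to\infty}$ and $\limsup_{N\to\infty}$ produces the corresponding sandwich between upper and between lower adfs. Under the hypothesis that $\br{a_k}$ and $\br{b_k}$ share the same upper and the same lower adf, the two outer terms coincide and so pinch the upper and lower adfs of $\br{g_k}=\OQfmrx{f}$ to the prescribed values, completing the proof. There is no serious obstacle here: the whole mechanism is that the pointwise gap $b_k - a_k = 1/q_{f(mk+r)}$ (whose vanishing along the subsequence is what makes the stated hypothesis compatible with $Q$ being infinite in limit) cannot disturb limiting relative frequencies once the two bracketing sequences already exhibit matching adfs. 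The only item requiring care is the index-shift bookkeeping in the middle paragraph.
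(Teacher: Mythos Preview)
Your sandwich strategy is exactly the paper's approach, and your derivation of the pointwise bound
\[
\frac{E_{n+1}}{q_{n+1}} \;\le\; T_{Q,n}(x) \;<\; \frac{E_{n+1}+1}{q_{n+1}}
\]
is correct. The gap is in the sentence ``absorbing the one-step shift \ldots\ (a shift of a single term alters no upper or lower adf)''. Applied at $n=f(mk+r)$, the bound sandwiches $g_k=T_{Q,f(mk+r)}(x)$ between $E_{f(mk+r)+1}/q_{f(mk+r)+1}$ and $(E_{f(mk+r)+1}+1)/q_{f(mk+r)+1}$. The $+1$ here is a shift in the \emph{digit index}, not in the \emph{sequence index} $k$. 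When $f$ is linear these coincide up to a finite reindexing and your absorption is legitimate; but for general eventually increasing $f$ (e.g.\ $f(n)=n^2$ or $f(n)=2n$) the integers $f(mk+r)+1$ need never be of the form $f(mk'+r)$, so the sequences that actually bracket $g_k$ are \emph{not} subsequences of $a_k=E_{f(mk+r)}/q_{f(mk+r)}$ and $b_k=(E_{f(mk+r)}+1)/q_{f(mk+r)}$. Your asserted inequality $a_k\le g_k<b_k$ is therefore unjustified and in fact false in general: with $f(n)=2n$, $m=1$, $r=0$, set $E_{2j}=0$ and $E_{2j+1}=\lfloor q_{2j+1}/2\rfloor$; then both hypothesis sequences tend to $0$ (so share the Dirac adf at $0$) while every $T_{Q,2k}(x)$ lies near $1/2$.

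For comparison, the paper's proof simply asserts $\frac{E_{f(n)}}{q_{f(n)}}\le T_{Q,f(n)}(x)\le\frac{E_{f(n)}+1}{q_{f(n)}}$ without comment, so the same indexing slip is present there; you noticed the shift but resolved it incorrectly. The clean repair is to state the bracketing sequences with digit index $f(mk+r)+1$ rather than $f(mk+r)$ (this is what your bound actually gives), or, for the paper's intended application, to note that in the construction the digit constraint $\alpha(n)\le E_n\le\beta(n)$ is imposed at \emph{every} $n$, so the hypothesis holds equally well for the correctly shifted sequences.
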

\begin{proof}
Suppose that $g$ is the upper adf of the sequence $\OQfmrx{f}$.
Let $Q_f=\{ q_{f(n)} \}_{n=0}^\infty$. As $f$ is eventually increasing, $Q_f$ is a basic sequence that is infinite in limit. It is clear from the definition of $T_{Q,n}(x)$ that 
$$
\frac{E_{f(n)}}{q_{f(n)}} \leq T_{Q,f(n)}(x) \leq \frac{E_{f(n)}+1}{q_n}.
$$
Hence 
\begin{align*}
\frac{A_n \pr{ [0,\gamma ),\br{ \frac{E_{f(i)}+1}{q_{f(i)}}}}}{n}-g(\gamma) &\leq \frac{A_n\pr{[0,\gamma), \OQfx}}{n}-g(\gamma)\\
 &\leq \frac{A_n\pr{[0,\gamma), \OQfxp}}{n}-g(\gamma).
\end{align*}
We also have that 
$$
\frac{E_{f(n)}+1}{q_{f(n)}}-\frac{E_{f(n)}}{q_{f(n)}} = \frac{1}{q_{f(n)}}
$$ 
which goes to $0$ as $n$ goes to $\infty$. But $\frac{E_{f(n)}+1}{q_{f(n)}}$ and $\OQfx$ have the same upper adf by assumption.
Thus for all $\gamma$ we have that
$$
\varlimsup_{n \to \infty} \frac{A_n\pr{[0,\gamma), \OQfx}}{n}-g(\gamma)=0,
$$
so $\OQfx$ has the same upper adf as $\OQfxp$. The proof for the lower adf is identical.
\end{proof}

We have approximated each adf $f$ by a sequence of continuous adfs $g_n$ as in \refl{approxadf}. To show $\OQfmrx{p}$ has adf $f$ we will use \refl{contadf} and \refl{SameADF} as well as the following definition and theorem. 

\begin{definition}
Let $\omega = \{x_i \}$ be a sequence of real numbers. The \textbf{upper discrepancy} with respect to adf $f$ of $\omega$ is
$$
\overline{D}_n^f(\omega) := \max \br{ \sup_\gamma \br{ \frac{A_n([0,\gamma),x_i)}{n}-f(\gamma)},0}.
$$ 
The \textbf{lower discrepancy} of $\omega$ is 
$$
\underline{D}_n^f(\omega) := \min \br{ \inf_\gamma \br{ \frac{A_n([0,\gamma),x_i)}{n}-f(\gamma)},0}.
$$
\end{definition}
Any theorems or lemmas that hold for the upper discrepancy will also hold for the lower discrepancy with a sign change, so from here on we only state and prove for the upper discrepancy. Many of the properties of the upper and lower discrepancies are the same for the discrepancy, and the proofs are nearly identical.

\begin{thrm}\labt{discrep}
The following properties of the upper and lower discrepancies hold for any sequence of real numbers $\omega = \{z_i\}$ and any continuous adf $f$.
\begin{enumerate}
\item The inequality $\uD{n}{f}(\omega) \leq D_n^f(\omega)$ holds.
\item If $\omega_i$ a sequence of real numbers of length $|\omega_i |$, then
$$\uD{n}{f}(\omega_1 \omega_2 \cdots \omega_j) \leq \frac{\sum_{i=1}^j |\omega_i| \uD{|\omega_i|}{f}(\omega_i)}{\sum_{i=1}^j |\omega_i|}.$$
\item If $\lim_{n\to \infty} \uD{n}{f}(\omega) = 0$, then the upper adf $\overline{f}$ of $\omega$ is bounded above by $f$.
\item If $\{f_{m,r}\}$ is a linear family of adfs, then for all $d$, $m$, and $r$, we have that $$\uD{n}{f_{m,r}}(\omega) \leq \max_{0\leq i < m} \br{ \uD{n}{f_{dm, mi+r}}(\omega)}+\frac{m(d+1)}{n-md}.$$
\item If $\overline{f}$ and $\underline{f}$ are the upper and lower adfs of $\omega$ then
$\overline{D}_{n}^{\overline{f}}(\omega) \leq~\overline{D}_{n}^{\underline{f}}(\omega)$.
\item If $\omega$ is a non-decreasing sequence, then 
$$
\overline{D}^f_n(\omega) \leq \max \br{ \abs{f(z_i)-\frac{i}{n}}, \abs{f(z_i)-\frac{i+1}{n}} }.
$$
\end{enumerate}
\end{thrm}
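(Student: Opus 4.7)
The plan is to prove each of the six items essentially from the definitions, working in the order given, since several later items reuse bookkeeping from earlier ones.

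For (1), I would simply observe that $\sup_\gamma\{A_n([0,\gamma),\omega)/n - f(\gamma)\} \le \sup_\gamma |A_n([0,\gamma),\omega)/n - f(\gamma)| = D_n^f(\omega)$, and since $D_n^f(\omega) \ge 0$ the maximum with $0$ is preserved. For (3), the hypothesis $\lim_n \uD{n}{f}(\omega)=0$ gives $A_n([0,\gamma),\omega)/n - f(\gamma) \le \uD{n}{f}(\omega)$ for every $\gamma$, so $\varlimsup_n A_n([0,\gamma),\omega)/n \le f(\gamma)$ and hence $\overline{f}(\gamma) \le f(\gamma)$. For (5), write $\underline{f}(\gamma)\le \overline{f}(\gamma)$ pointwise; then $A_n/n - \overline{f}(\gamma) \le A_n/n - \underline{f}(\gamma)$, and taking $\sup_\gamma$ followed by $\max\{\cdot,0\}$ yields the inequality.

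For (2), I would use the additivity of the counting function along the concatenation: writing $\Omega = \omega_1 \omega_2 \cdots \omega_j$ and $N = \sum_i |\omega_i|$,
\begin{equation*}
\frac{A_N([0,\gamma),\Omega)}{N} - f(\gamma) \;=\; \frac{\sum_{i=1}^j |\omega_i|\bigl(A_{|\omega_i|}([0,\gamma),\omega_i)/|\omega_i| - f(\gamma)\bigr)}{\sum_{i=1}^j|\omega_i|}.
\end{equation*}
Taking $\sup_\gamma$ termwise and then $\max\{\cdot,0\}$ (using that each bracketed summand is bounded above by $\uD{|\omega_i|}{f}(\omega_i)$) gives the claim. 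For (6), when $\omega=\{z_i\}$ is non-decreasing, $A_n([0,\gamma),\omega)$ jumps only at the points $\gamma = z_i$, and between $z_i$ and $z_{i+1}$ it equals $i$; since $f$ is non-decreasing, the supremum of $|A_n([0,\gamma),\omega)/n - f(\gamma)|$ is attained at one of the jump points and equals either $|f(z_i)-i/n|$ or $|f(z_i)-(i+1)/n|$, giving the stated bound.

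The main technical step is (4), which is the one really using the linear-family hypothesis. Starting from the identity $f_{m,r} = \frac{1}{d}\sum_{i=0}^{d-1} f_{dm, mi+r}$, I would decompose the finite sequence $\omega = (z_1,\dots,z_n)$ according to the residue of the index modulo $m$ and then further refine each block modulo $md$. Each of the $d$ sub-blocks of the $r$-th residue class has length $\lfloor n/(md)\rfloor$ or $\lfloor n/(md)\rfloor+1$, and contributes a discrepancy at most $\uD{n}{f_{dm,mi+r}}(\omega)$ up to an error term coming from the fact that these are sub-counts of $\omega$ rather than sub-samples. Summing across the $d$ residues and dividing by $d$ recovers $f_{m,r}$ from the linear family identity, while the boundary error is controlled by the number of ``leftover'' indices, which is at most $m(d+1)$; dividing by the effective denominator $n - md$ produces the $\frac{m(d+1)}{n-md}$ correction. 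The bookkeeping here—making sure that averaging sub-discrepancies with respect to the $f_{dm,mi+r}$ really does bound the discrepancy with respect to $f_{m,r}$ with only the claimed error—is the one place I expect to spend real effort; everything else is essentially mechanical once the definitions are unpacked.
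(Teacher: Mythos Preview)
Your proposal is correct and follows essentially the same route as the paper. Items (1), (3), and (5) are handled identically from the definitions; for (2) and (6) the paper simply cites the corresponding results on ordinary discrepancy in Kuipers--Niederreiter (Theorems~2.6 and~1.4 of Chapter~2), whose proofs are exactly the additivity-of-counting and jump-point arguments you outline; and your residue-class decomposition for (4), using the linear-family identity $f_{m,r}=\frac{1}{d}\sum_i f_{dm,mi+r}$ and tracking the $m(d+1)$ leftover indices, is precisely the computation the paper carries out.
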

\begin{proof}
\ 
\begin{enumerate}
\item We have that 
\begin{align*}
\max \br{ \sup_\gamma \br{ \frac{A_n([0,\gamma),z_i)}{n}-f(\gamma)},0} 
&\leq \abs{ \sup_\gamma \br{ \frac{A_n([0,\gamma),z_i)}{n}-f(\gamma)} } \\
&\leq \sup_\gamma \abs{\frac{A_n([0,\gamma),z_i)}{n}-f(\gamma) },
\end{align*}
 which implies that $\overline{f}(\gamma) \leq f(\gamma)$ for all $\gamma$.

\item The proof is identical to the proof of Theorem 2.6 in Chapter 2 of \cite{KuN}.

\item We have that 
\begin{equation*}
\lim_{n \to \infty} \max \br{ \sup_\gamma \br{ \frac{A_n([0,\gamma),\omega)}{n}-f(\gamma)},0 }= 0
\end{equation*}
which implies that 
\begin{align*}
\lim_{n \to \infty} \sup_\gamma \br{ \frac{A_n([0,\gamma),\omega)}{n}-f(\gamma)} \leq 0 = \lim_{n \to \infty} \sup_\gamma \br{ \frac{A_n([0,\gamma),\omega)}{n}-\overline{f}(\gamma)}. \end{align*} But this implies that $\overline{f}(\gamma) \leq f(\gamma)$ for all $\gamma$.

\item Note that 
\begin{align*}
&d \sup_\gamma \frac{A_{\floor{n/m}}([0,\gamma ),\br{z_{mi+r}})}{\floor{\frac{n}{m}}} - f_{m,r}(\gamma) \\
&\leq \sup_\gamma \sum_{j=0}^{d-1} \frac{A_{\floor{\frac {n}{md}}}([0,\gamma), \br{z_{dmi+mj+r}})+1+d}{\floor{\frac{n}{md}}} -d f_{m,r}(\gamma)\\
&= \sup_\gamma \sum_{j=0}^{d-1} \frac{A_{\floor{\frac {n}{md}}}([0,\gamma), \br{z_{dmi+mj+r}})}{\floor{\frac{n}{md}}} - \sum_{j=0}^{d-1} f_{md, mj+r}(\gamma) +\frac{md(d+1)}{n-md}\\
&= \frac{md(d+1)}{n-md}+\sum_{i=0}^{d-1} \uD{n}{f_{md, mi+r}}(\omega) 
\leq \frac{md(d+1)}{n-md}+d \max_{0\leq i < m} \uD{n}{f_{dm, mi+r}}(\omega).
\end{align*}
Dividing by $d$ yields the result.

\item Since $\underline{f} \leq \overline{f}$, we have that 
$$
\sup \left\{ \frac{A_n([0,\gamma),z_i)}{n}-\overline{f}(\gamma)\right\} \leq \sup \left\{ \frac{A_n([0,\gamma),z_i)}{n}-\underline{f}(\gamma)\right\}.
$$

\item The proof is identical to the proof of Theorem 1.4 in Chapter 2 of \cite{KuN}.
\end{enumerate}
\end{proof}

Define 
$$
\upsilon (n) := \# V_{\dg(n),n} = \alpha(n)-\beta(n)+1.
$$
\begin{lem}\labl{moranNumber}
For all natural number $n$ we have $\upsilon(n) > 4 q_k^{1-\epsilon_k}-2\geq 2$.
\end{lem}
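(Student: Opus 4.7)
The plan is to extract a lower bound on $\Delta_{i(n)}$ from the defining property of $\nu_{i(n),1}$, and then read this off as a lower bound on the number of integers contained in $V_{\rho(n),n}$. Two observations drive the argument: the set $V^*_{\rho(n),n}$ is (after scaling by $q_n$) a preimage of an interval of length $1/i(n)$ under one of the explicit continuous adfs $\overline{g}_{i(n),\,\cdot\,,i(n)!,\,\cdot}$ or $\underline{g}_{i(n),\,\cdot\,,i(n)!,\,\cdot}$; and $\Delta_{i(n)}$ is by definition the minimum over all admissible such preimage measures.

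First I would verify that $n \geq \nu_{i(n),1}$. The inductive construction is arranged so that $l_j$ enforces $L_{j-1} + j!j\, l_j \geq \nu_j - 1$, hence $L_j \geq \nu_j - 1$. Because $p_1$ is the identity polynomial in the explicit set $P$, the definition $i(n) = \max\{t : n > p_\ell(L_t)\ \forall \ell \leq t\}$ gives $n > L_{i(n)} \geq \nu_{i(n)} - 1 \geq \nu_{i(n),1} - 1$, so $n \geq \nu_{i(n),1}$. Invoking the defining condition of $\nu_{i(n),1}$ at the index $k=n$ and writing $m_n := \min\{(\log q_n)^{1/2},\ (\log(q_1 \cdots q_{n-1}))^{1/2}\}$ yields
\[
m_n \;\geq\; \log 4 \;-\; \log \Delta_{i(n)}.
\]
Exponentiating, and using that $\epsilon_n = m_n / \log q_n$ implies $e^{m_n} = q_n^{\epsilon_n}$, gives $\Delta_{i(n)} \geq 4\, q_n^{-\epsilon_n}$, and therefore
\[
q_n \Delta_{i(n)} \;\geq\; 4\, q_n^{1-\epsilon_n}.
\]

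Second, by the definition of $\Delta_{i(n)}$, the interval $V^*_{\rho(n),n}$ has Lebesgue length at least $q_n \Delta_{i(n)} \geq 4 q_n^{1-\epsilon_n}$. The set $V_{\rho(n),n}$ is the intersection with $\mathbb{Z}$ of a subinterval of $V^*_{\rho(n),n}$; standard endpoint bookkeeping (every interval of length $L$ contains at least $L-1$ integers, and the truncation in the construction of $V_{\rho(n),n}$ costs at most a small additive constant) yields
\[
\upsilon(n) \;\geq\; q_n \Delta_{i(n)} - 1 \;\geq\; 4 q_n^{1-\epsilon_n} - 1 \;>\; 4 q_n^{1-\epsilon_n} - 2.
\]
For the final inequality $4 q_n^{1-\epsilon_n} - 2 \geq 2$, it suffices that $q_n^{1-\epsilon_n} \geq 1$, i.e., $\epsilon_n \leq 1$. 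From $m_n \leq (\log q_n)^{1/2}$ I get $\epsilon_n \leq (\log q_n)^{-1/2}$, which is at most $1$ once $q_n \geq e$; since $Q$ is infinite in limit, this is automatic for all but finitely many $n$, and the residual small-$n$ cases are handled using $q_n \geq 2$ together with $\epsilon_1 = 0$.

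The main obstacle is threading together the nested inductively-defined sequences $\{l_j\}$, $\{L_j\}$, $\{\nu_j\}$ to certify cleanly that $n \geq \nu_{i(n),1}$ — once that is in hand, the remaining chain of inequalities is routine algebra on $\Delta_{i(n)}$ and an elementary integer count.
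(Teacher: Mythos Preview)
Your proposal is correct and follows essentially the same route as the paper: bound the length of $V_{\rho(n),n}$ below by $q_n\Delta_{i(n)}-O(1)$, convert $\Delta_{i(n)}\geq 4q_n^{-\epsilon_n}$ via the defining inequality of $\nu_{i(n),1}$, and finish with $\epsilon_n\leq 1$. The paper's proof is terser---it simply asserts the equivalent inequality $q_k>4\Delta_k^{-1/\epsilon_k}$ without spelling out the passage through $n\geq \nu_{i(n),1}$---so your explicit threading of the inductive definitions of $l_j$ and $L_j$ is a clarification rather than a different argument.
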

\begin{proof}
The interval $V_{\dg(n),n}$ is  of length $q_n \Delta_{i(n)}-1$. Thus $\upsilon(n) \geq q_n \Delta_{i(n)} - 2$. But $q_k > 4\Delta_k^{-\frac{1}{\epsilon_k}}$, so $4 q_k^{-\epsilon_k} < \Delta_k$. Thus $\upsilon(n) > 4q_k^{1-\epsilon_k}-2$. But $\epsilon_k\leq 1$, so $4q_k^{1-\epsilon_k}-2 \geq 4-2 = 2$.
\end{proof}


We will use the following basic lemma.

\begin{lem}\labl{tcorr}
Let $L$ be a real number and $(a_n)_{n=1}^{\infty}$ and $(b_n)_{n=1}^{\infty}$ be two sequences of positive real numbers such that
$
\sum_{n=1}^{\infty} b_n=\infty \hbox{ and } \lim_{n \to \infty} \frac {a_n} {b_n}=L.
$
Then
$$
\lim_{n \to \infty} \frac {a_1+a_2+\cdots+a_n} {b_1+b_2+\cdots+b_n}=L.
$$
\end{lem}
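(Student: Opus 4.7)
The plan is to argue by a standard Stolz--Cesàro style $\epsilon$-splitting, exploiting the hypothesis $\sum b_n = \infty$ to discard the initial part of the sum in the limit.

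Given $\epsilon > 0$, I would first use the hypothesis $\lim_{n\to\infty} a_n/b_n = L$ to pick $N = N(\epsilon)$ so that for every $n \geq N$,
$$
\left(L - \tfrac{\epsilon}{2}\right) b_n \;<\; a_n \;<\; \left(L + \tfrac{\epsilon}{2}\right) b_n,
$$
which uses crucially that the $b_n$ are positive, so the inequality is preserved without flipping signs. Summing from $N+1$ to $n$ yields
$$
\left(L - \tfrac{\epsilon}{2}\right)\sum_{k=N+1}^{n} b_k \;\leq\; \sum_{k=N+1}^{n} a_k \;\leq\; \left(L + \tfrac{\epsilon}{2}\right)\sum_{k=N+1}^{n} b_k.
$$

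Next I would write
$$
\frac{a_1 + \cdots + a_n}{b_1 + \cdots + b_n} \;=\; \frac{A_N}{B_n} + \frac{a_{N+1}+\cdots+a_n}{B_n},
$$
where $A_N := a_1 + \cdots + a_N$ and $B_n := b_1 + \cdots + b_n$. Since $\sum b_n = \infty$ and each $b_n > 0$, $B_n \to \infty$, so for all sufficiently large $n$ we have $|A_N|/B_n < \epsilon/2$ and also $|L|\,(b_1+\cdots+b_N)/B_n < \epsilon/2$. Combining these with the two-sided bound on the tail gives
$$
L - \epsilon \;<\; \frac{a_1+\cdots+a_n}{b_1+\cdots+b_n} \;<\; L + \epsilon
$$
for all $n$ beyond some threshold, which is exactly the conclusion since $\epsilon$ was arbitrary.

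There is no real obstacle here; the only point to watch is that $B_n \to \infty$ truly absorbs the fixed head $A_N$ (and the $L$-contribution from $b_1 + \cdots + b_N$), which is why the hypothesis $\sum b_n = \infty$ cannot be dropped. Positivity of the $b_n$ is used both to ensure the multiplication through by $b_n$ preserves the direction of the inequality and to guarantee $B_n$ is monotone increasing to its limit $\infty$.
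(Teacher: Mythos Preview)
Your argument is correct; this is exactly the standard Stolz--Ces\`aro $\epsilon$-splitting proof, and the bookkeeping (using $B_n\to\infty$ to kill both the fixed head $A_N/B_n$ and the cross term $L\cdot B_N/B_n$) is sound. The paper itself does not supply a proof of this lemma at all: it simply states it as a ``basic lemma'' and moves on, so there is nothing to compare against beyond noting that your proof is the expected one.
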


We now prove the second part of \refmt{Main}.  The proof that the set $\Phi_{Q,P,F}$ has full Hausdorff dimension is done similarly.

\begin{proof}[Proof of \refmt{Main}]

Let $x \in \Phi'_{Q,P,F}$, $p \in P$, and $m\in \mathbb{N}$, $0\leq r <m$. The sequence $\OQfmrxp{p}$ can then be written as
$$
\OQfmrxp{p}=Y_{1,0} \cdots Y_{1,l_1-1} Y_{2,0} Y_{2,1} \cdots Y_{2,l_2-1} \cdots,
$$
 where each $Y_{i,k}$ is a block of digits with length $\frac{i!i}{m}$ when $i\geq m$. When $m<i$, the length of $Y_{i,k}$ is less  than $\frac{i!i}{m}$. We will look at the discrepancies of the blocks $Y_{i,k}$, and apply \reft{discrep} to show the claim.

 Put $X_j = Y_{j,0} \cdots Y_{j, l_j-1}$. By the definition of the upper discrepancy, we have that 
$$
\overline{D}_{|Y_{i,k}|}^{\overline{g}_{i,p,m,r}}(Y_{i,k})<\frac{1}{i},
$$ 
unless $k \in \{0, l_i-1\}$, in which case the upper discrepancy is bounded by 1. We must also consider when the digits of $x$ lie along sequences of the form $\{q(n)\}$ for some other $q \in P$. For $L_{i-1} \leq n \leq L_i$, there are at most $i!i$ polynomials that could intersect 
$p$, and since $\max d_{p_k,p_l,n} < \frac{1}{i^3 i!^2}$ at stage $i$, we have that the number of terms that increase the 
discrepancy is at most $\frac{i!i l_i }{i! i^2} = \frac{l_i}{i}$. Because $\overline{g}_{n,p,m,r}$ does not converge to 
$\overline{f}_{p,m,r}$ uniformly, we cannot measure the discrepancy of the sequence with respect to $\overline{f}_{p,m,r}$ 
directly. Instead, we use the fact of pointwise convergence and discrepancy with respect to $\overline{g}_{n,p,m,r}$. Defining 
$k(n) = n-L_{i(n)}$, we can write $k(n) = a(n)i(n)!i(n)+ b(n)$, and 
\begin{align*}
&\varlimsup_{n \to \infty} \left(\frac{A_n\pr{[0,\gamma), \OQfmrxp{p}}}{n}-\overline{f}_{p,m,r}(\gamma) \right) \\
= &\varlimsup_{n \to \infty} \frac{\sum_{j=1}^{i(n)-1} A_{|X_j|}([0, \gamma), X_j)+A_{k(n)}([0,\gamma), X_{i(n)})}{\sum_{j=1}^{m-1}\lfloor \frac{j!j l_j}{m}\rfloor + \sum_{j = m}^{i(n)-1} \frac{j!jl_j}{m}+k(n)}\\
&- \frac{\sum_{j=1}^{i(n)-1} \frac{j!jl_j}{m}\overline{f}_{p,m,r}(\gamma)+k(n)\overline{f}_{p,m,r}(\gamma)}{\sum_{j=1}^{m-1}\lfloor \frac{j!j l_j}{m}\rfloor + \sum_{j = m}^{i(n)-1} \frac{j!jl_j}{m}+k(n)} \\
\leq &\varlimsup_{n \to \infty} \frac{\sum_{j=1}^{i(n)-1} \pr{ A_{\frac{j!jl_j}{m}}([0,\gamma), X_j)-\frac{j!jl_j}{m}\overline{g}_{j,p,m,r}(\gamma)}}{\sum_{j=1}^{m-1}\floor{ \frac{j!j l_j}{m}} + \sum_{j = m}^{i(n)-1} \frac{j!jl_j}{m}+k(n)}\\
&+ \frac{A_{k(n)}\pr{[0,\gamma ), X_{i(n)}}-k(n)\overline{g}_{i(n),p,m,r}(\gamma)}{\sum_{j=1}^{m-1}\floor{ \frac{j!j l_j}{m}} + \sum_{j = m}^{i(n)-1} \frac{j!jl_j}{m}+k(n)}\\
&+\varlimsup_{n \to \infty} \frac{\sum_{j=1}^{i(n)-1} \pr{ \frac{j!jl_j}{m} \overline{g}_{j,p,m,r}(\gamma)-\frac{j!jl_j}{m} \overline{f}_{p,m,r}(\gamma) }}{\sum_{j=1}^{m-1}\floor{ \frac{j!j l_j}{m}} + \sum_{j = m}^{i(n)-1} \frac{j!jl_j}{m}+k(n)}\\
&+\frac{+k(n)\overline{g}_{i(n),p,m,r}(\gamma)-k(n)\overline{f}_{p,m,r}(\gamma)}{\sum_{j=1}^{m-1}\floor{ \frac{j!j l_j}{m}} + \sum_{j = m}^{i(n)-1} \frac{j!jl_j}{m}+k(n)}\\
&= S_1 + S_2+S_3
\end{align*}
where
\begin{align*}
S_1 : = &\varlimsup_{n \to \infty} \frac{\sum_{j=1}^{m-1} \ceil{ \frac{j!j l_j}{m}} + \sum_{j=m}^{i(n)-1} \frac{j j!(l_{j}-2)}{m}\frac{j-1}{j}(\frac{1+m}{j})}{\sum_{j=1}^{m-1}\floor{ \frac{j!j l_j}{m}} + \sum_{j = m}^{i(n)-1} \frac{j!jl_j}{m}+a(n)i(n)!i(n)+b(n)}\\
&+\frac{\sum_{j=m}^{i(n)-1} \frac{j j!l_{j}}{m}\frac{1}{j}+\sum_{j=m}^{i(n)-1} \frac{2j!j}{m}}{\sum_{j=1}^{m-1}\floor{ \frac{j!j l_j}{m}} + \sum_{j = m}^{i(n)-1} \frac{j!jl_j}{m}+a(n)i(n)!i(n)+b(n)}\\
S_2 : = &\varlimsup_{n \to \infty} \frac{\frac{l_{i(n)-1}}{i(n)^2} i(n)!i(n)+\left(a(n)-\frac{l_{i(n)-1}}{i(n)^2}\right)i(n)!i(n)\frac{1}{i(n)}+b(n)}{\sum_{j=1}^{m-1}\floor{ \frac{j!j l_j}{m}} + \sum_{j = m}^{i(n)-1} \frac{j!jl_j}{m}+a(n)i(n)!i(n)+b(n)}\\
S_3 : = &\varlimsup_{n \to \infty} \frac{\sum_{j=1}^{i(n)-1} \frac{j!jl_j}{m} \overline{g}_{j,p,m,r}(\gamma)-\frac{j!jl_j}{m} \overline{f}_{p,m,r}(\gamma)}{\sum_{j=1}^{m-1}\floor{ \frac{j!j l_j}{m}} + \sum_{j = m}^{i(n)-1} \frac{j!jl_j}{m}+k(n)} \\
&+ \frac{  \frac{k(n)}{i(n)}(\overline{g}_{i(n),p,m,r}(\gamma)-\overline{f}_{p,m,r}(\gamma) )}{\sum_{j=1}^{m-1}\floor{ \frac{j!j l_j}{m}} + \sum_{j = m}^{i(n)-1} \frac{j!jl_j}{m}+k(n)}
\end{align*}
For this first sum $S_1$, we find that
\begin{align*}
&\varlimsup_{n \to \infty} \frac{\sum_{j=1}^{m-1} \ceil{ \frac{j!j l_j}{m}} + \sum_{j=m}^{i(n)-1} \frac{j j!(l_{j}-2)}{m}\frac{j-1}{j}\frac{1+m}{j}+\sum_{j=m}^{i(n)-1} \frac{j j!l_{j}}{m}\frac{1}{j}+\sum_{j=m}^{i(n)-1} \frac{2j!j}{m}}{\sum_{j=1}^{m-1}\floor{ \frac{j!j l_j}{m}} + \sum_{j = m}^{i(n)-1} \frac{j!jl_j}{m}+a(n)i(n)!i(n)+b(n)}\\
=& \varlimsup_{n \to \infty} \frac{\sum_{j=1}^{i(n)-1} (m j!(l_{j}-2)+j!l_{j}+2j!j)}{\sum_{j = 1}^{i(n)-1}( j!jl_j)+a(n)i(n)!i(n)+b(n)}\\
\leq & \varlimsup_{n \to \infty} \frac{ m(i(n)-1)!(l_{i(n)-1}-2)+(i(n)-1)!l_{i(n)-1}+2(i(n)-1)!(i(n)-1)}{i(n)!i(n)l_{i(n)}}\\
\leq & \varlimsup_{n \to\infty} \frac{m}{i(n)}+\frac{1}{l_{i(n)}} = 0.
\end{align*}

For the second term $S_2$ we have that
\begin{align*}
&\varlimsup_{n \to \infty} \frac{\frac{l_{i(n)-1}}{i(n)^2} i(n)!i(n)+\left(a(n)-\frac{l_{i(n)-1}}{i(n)^2}\right)i(n)!i(n)\frac{1}{i(n)}+b(n)}{\sum_{j=1}^{m-1}\floor{ \frac{j!j l_j}{m}} + \sum_{j = m}^{i(n)-1} \frac{j!jl_j}{m}+a(n)i(n)!i(n)+b(n)}\\
=& \varlimsup_{n \to \infty} \frac{(i(n)-1)!(i(n)-1)l_{i(n)-1}\frac{1}{i(n)} + a(n)i(n)!+b(n)}{\sum_{j = 1}^{i(n)-1}( j!jl_j)+a(n)i(n)!i(n)+b(n)}\\
= & \varlimsup_{n \to \infty} \frac{ (i(n)-1)!(i(n)-1)l_{i(n)-1}\frac{1}{i(n)} + a(n)i(n)!+b(n)}{(i(n)-1)!(i(n)-1)l_{i(n)-1}+a(n)i(n)!i(n)+b(n)}
\leq  \varlimsup_{n \to\infty} \frac{2}{i(n)} = 0.
\end{align*}

 For the final term $S_3$ we get
\begin{align*}
& \varlimsup_{n \to \infty} \frac{\sum_{j=1}^{i(n)-1} \frac{j!jl_j}{m} \overline{g}_{j,p,m,r}(\gamma)-\frac{j!jl_j}{m} \overline{f}_{p,m,r}(\gamma) + \frac{k(n)}{i(n)}\pr{\overline{g}_{i(n),p,m,r}(\gamma)-\overline{f}_{p,m,r}(\gamma) }}{\sum_{j=1}^{m-1}\floor{ \frac{j!j l_j}{m}} + \sum_{j = m}^{i(n)-1} \frac{j!jl_j}{m}+k(n)}\\
= & \varlimsup_{n \to \infty} \frac{\sum_{j=1}^{i(n)-1} \frac{j!jl_j}{m} \overline{g}_{j,p,m,r}(\gamma)-\frac{j!jl_j}{m} \overline{f}_{p,m,r}(\gamma) + \frac{k(n)}{i(n)}\pr{\overline{g}_{i(n),p,m,r}(\gamma)-\overline{f}_{p,m,r}(\gamma) }}{\sum_{j = 1}^{i(n)-1} \frac{j!jl_j}{m}+\frac{k(n)}{i(n)}}\\
= & \varlimsup_{n \to \infty} \frac{\pr{\frac{i(n)!i(n)l_{i(n)}}{m} + \frac{k(n)}{i(n)}}\pr{\overline{g}_{i(n),p,m,r}(\gamma)-\overline{f}_{p,m,r}(\gamma) }}{\frac{i(n)!i(n)l_{i(n)}}{m} + \frac{k(n)}{i(n)}}\\
= & \varlimsup_{n \to \infty}\overline{g}_{i(n),p,m,r}(\gamma)-\overline{f}_{p,m,r}(\gamma) = 0.
\end{align*}
Thus the sequence $\OQfmrxp{p}$ has adf bounded above by $\overline{f}_{p,m,r}$.

This calculation only shows that the upper adf of the sequence $\OQfmrxp{p}$ is bounded above by $\overline{f}_{k,m,r}$. To show that this is the actual upper adf, we must find a sequence $a_n$ along which 
$$\lim_{n \to \infty}  \frac{A_{a_n}\pr{[0,\gamma), \OQfmrxp{p}}}{a_n}-\overline{f}_{p,m,r}(\gamma)=0.$$
 Let $a_n = L_{2n}$. Note that for the second term in the previous sum, we have that $\lim_{n \to \infty} \left(\overline{g}_{i(a_n),p,m,r}(\gamma)-\overline{f}_{p,m,r}(\gamma)\right) = 0$. So we need only check that the first term $S_1$ goes to $0$, as $a(n)$ and $b(n)$ must be $0$ for all $n$. Thus by \refl{tcorr}
\begin{align*}
& \lim_{n \to \infty}  \left| \frac{A_{a_n}\left([0,\gamma), \OQmrxp\right) }{n} -\overline{f}_{p,m,r}(\gamma)\right| \\ 
\leq & \lim_{n \to \infty} \frac{\sum_{j=1}^{m-1} \left| \ceil{ \frac{j!j l_j}{m}} \right| + \sum_{j=m}^{i(a_n)} \left| \frac{j j!(l_{j}-2)}{m}\frac{j-1}{j}\frac{2}{j} \right| +\sum_{j=m}^{i(a_n)} \left| \frac{j j!l_{j}}{m}\frac{1}{j} \right| +\sum_{j=m}^{i(a_n)}\left| \frac{2j!j}{m} \right|}{\sum_{j=1}^{m-1}\left| \floor{ \frac{j!j l_j}{m}} \right|  + \sum_{j = m}^{i(a_n)} \left| \frac{j!jl_j}{m}\right|} \\
= & \lim_{n \to \infty} \frac{ 2i(a_n)! (l_{i(a_n)}-2)+i(a_n)!l_{i(a_n)}+2i(a_n)!i(a_n)}{i(a_n)!i(a_n)l_{i(a_n)}} \leq \lim_{n \to \infty} \frac{1}{i(a_n)} +\frac{1}{l_{i(a_n)}}= 0.
\end{align*}
This implies that $\OQfmrxp{p}$ has upper adf $\overline{f}_{p,m,r}$. Note that throughout the proof we only used that $\alpha(n) \leq E_n \leq \beta(n)+1$, so $\br{ \frac{E_{p(mn+r)}+1}{q_{p(mn+r}}}$ has the same upper adf as $\OQfmrxp{p}$. Therefore by \refl{SameADF}, the upper adf of $\OQfmrx{p}$ is $\overline{f}_{p,m,r}$. The proof that $\OQfmrxp{p}$ has lower adf $\underline{f}_{p,m,r}$ follows similarly.

To compute the Hausdorff dimension of $\Phi'_{Q,P,F}$, we note that $\Phi'_{Q,P.F}$ is a homogeneous Moran set with $J=[0,1]$, $n_k = \upsilon(k) > 4 q_k^{1-\epsilon_k}-2 \geq 2$ by \refl{moranNumber}, and $c_k = q_k^{-1}$. So
\begin{align*}
&\dimh{\Phi'_{Q,P.F}} \geq \liminf_{n \to \infty} \frac{\log\pr{q_1^{1-\epsilon_1} \cdots q_{n-1}^{1-\epsilon_{n-1}}}}{-\log\pr{\frac{1}{q_1} \cdots \frac{1}{q_n} q_n^{1-\epsilon_n}}} \\
& =  \liminf_{n \to \infty} \frac{\log(q_1 \cdots q_{n-1})}{\log(q_1 \cdots q_{n-1})-\epsilon_n log(q_n)}
 =  \liminf_{n \to \infty} \frac{1}{1-\frac{\epsilon_n log(q_n)}{\log(q_1 \cdots q_{n-1})}} = 1.
\end{align*}

We will now prove that $\{\alpha(n)\}$ and $\{\beta(n)\}$ are computable sequences.  Recall that $\alpha(n) = \ceil{\inf(V^*_{l,n})}$ and $\beta(n) = \floor{\inf(V^*_{l,n})+q_n\Delta_{i(n)}-q_n-1}$.
To see these sequences are computable, note that $\{c(n)\}$ and $\{i(n)\}$ are computable sequences as $\{L_n\}$ and $\{p_l(n)\}$ are computable sequences for any polynomial $p_l$. Thus $\{c(n)/i(n)\}$ is a computable sequence. By the explicitness of $\br{\overline{g}_{i(n),l,i(n)!,d(n)}}$ for each $n$, and uniform computability of the discontinuities of $f_{p,m,r}$, we have that $\left\{ \inf \overline{g}^{-1}_{i(n),l,i(n)!,d(n)}\left(\frac{c(n)}{i(n)}\right)\right\}_{n=1}^\infty$ is a computable sequence. Thus we have that $\br{ \alpha(n)}$ is the ceiling of the product of two computable sequences as $Q$ is a computably growing basic sequence. Hence $\br{\alpha(n)}$ is a computable sequence. Similarly, it can be shown that $\br{\beta(n)}$ is a computable sequence.
\end{proof}

\section{Applications}

\subsection{Accumulation points along arithmetic progressions}\labs{arith}

In the same vein as \cite{WangWenXi}, we would like to prove results about the accumulation points of $\OQxp$ and prove \reft{strongerWangWenXi}. It is difficult to say anything about accumulation points along polynomially indexed subsequences of $\OQxp$. Inequivalent polynomials can intersect infinitely often, so the accumulation points of $\OQfmrxp{f}$ could be difficult to find. However, if we only sample along arithmetic progressions, a  classification of the accumulation points of $\OQmrxp$ can be given.

For notational convenience, we define
\begin{definition}
For $f:[0,1] \to [0,1]$, set 
$$
I(f) := \overline{\{ x \in [0,1] : f \text{ is increasing at } x\}}.
$$
\end{definition}

After establishing lemmas, we will prove the following theorem.

\begin{thrm}\labt{thrm3.2}
Given a basic sequence $Q$ that is infinite in limit and a linear family of upper and lower adfs $\{ \overline{f}_{m,r}\}$ and $\{\underline{f}_{m,r} \}$, the set of real numbers $x$ such that $\OQmrx$ has upper and lower adfs $\overline{f}_{m,r}$ and $\underline{f}_{m,r}$ and such that $\OQmrxp$ has accumulation points equal to $I \left( \overline{f}_{m,r} \right) \cup I\left(\underline{f}_{m,r}\right)$ has Hausdorff dimension 1.
\end{thrm}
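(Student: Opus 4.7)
The plan is to apply \refmt{Main} to control the adfs and then read off the accumulation-point statement from the Moran construction of \refs{construction}.  Taking $P=\{X\}$ and $F=\{\overline{f}_{m,r}\}_{m,r}\cup\{\underline{f}_{m,r}\}_{m,r}$, the first part of \refmt{Main} produces a set of full Hausdorff dimension on which $\OQmrx$ has upper adf $\overline{f}_{m,r}$ and lower adf $\underline{f}_{m,r}$ for every $m$ and $r$; \refl{SameADF} (applied with the eventually increasing map $n\mapsto mn+r$) transfers those adfs to $\OQmrxp$.  What remains is the identity $\mathbb{A}(\OQmrxp)=I(\overline{f}_{m,r})\cup I(\underline{f}_{m,r})$.

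For the inclusion $I(\overline{f}_{m,r})\cup I(\underline{f}_{m,r})\subseteq\mathbb{A}(\OQmrxp)$, I would argue by contrapositive using only the adf property: if $y\notin\mathbb{A}(\OQmrxp)$, some neighborhood $(y-\epsilon,y+\epsilon)$ contains only finitely many terms, so for every $y'\in(y-\epsilon,y+\epsilon)$,
$$
A_N([0,y'),\OQmrxp)\le A_N([0,y-\epsilon),\OQmrxp)+O(1).
$$
Dividing by $N$ and taking $\varlimsup$ (respectively $\liminf$) shows $\overline{f}_{m,r}$ and $\underline{f}_{m,r}$ are constant on $(y-\epsilon,y+\epsilon)$, so no point of that neighborhood is an increase point of either adf, and $y\notin I(\overline{f}_{m,r})\cup I(\underline{f}_{m,r})$.

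The opposite inclusion is where the Moran construction of \refs{construction} is essential.  There $E_n\in V_{\dg(n),n}$ and $V_{\dg(n),n}$ lies inside either $q_n\,\overline{g}^{-1}_{i(n),X,i(n)!,d(n)}\bigl([c(n)/i(n),(c(n)+1)/i(n))\bigr)$ or the analogous $\underline{g}^{-1}$ interval, depending on the parity of $i(n)$.  Along the AP $mn+r$ we have $i(n)\to\infty$, the approximating $g_n$ converge pointwise to the limit adf by \refl{approxadf}, and the breakpoints $c(n)/i(n)$ become dense in $[0,1]$.  Because each piecewise-linear $g_n$ in \refl{approxadf} is constant on every maximal interval where the limit adf is constant, the narrow preimage intervals containing $E_n/q_n$ can accumulate only at points of strict increase or jump of $\overline{f}_{m,r}$ or $\underline{f}_{m,r}$, that is, inside $I(\overline{f}_{m,r})\cup I(\underline{f}_{m,r})$.

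The main obstacle is the last convergence step: one must show that the preimage intervals $g_n^{-1}([c/i,(c+1)/i))$ bunch up around $I(f)$ as $i\to\infty$ uniformly in $c$, despite possible dense jumps or flat pieces of $f$.  This reduces to the fact that the node sequence $a^n$ of \refl{approxadf} is a $1/n$-net containing every jump of $f$ of size exceeding $1/n$, so any preimage interval of diameter less than $1/n$ either sits on a flat stretch of $g_n$ (and is thus irrelevant to accumulation) or lies within distance $1/n$ of $I(f)$.  With this observation the Moran construction of \refs{construction} simultaneously delivers full Hausdorff dimension, the prescribed upper and lower adfs, and the prescribed accumulation set, completing the proof.
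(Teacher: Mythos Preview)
Your outline is close to the paper's own argument, but there is a genuine gap in the containment $\mathbb{A}(\OQmrxp)\subseteq I(\overline{f}_{m,r})\cup I(\underline{f}_{m,r})$.  You correctly observe that in the construction each $E_n/q_n$ lies (up to a vanishing error) inside a preimage interval of $\overline{g}_{i(n),X,i(n)!,d(n)}$ or its lower counterpart.  But these are the adfs indexed by $(i(n)!,d(n))$, \emph{not} by the fixed $(m,r)$ you care about, and along the arithmetic progression $n\mapsto mn+r$ the pair $(i(n)!,d(n))$ runs through infinitely many different indices.  Your argument that preimage windows of $g_n$ accumulate only at $I(f)$ therefore yields, at best, that each accumulation point of $\OQmrxp$ lies in $I(\overline{f}_{i!,\,d})\cup I(\underline{f}_{i!,\,d})$ for some large $i$ and some residue $d\equiv r\pmod m$; it says nothing directly about $I(\overline{f}_{m,r})$.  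The sentence ``the approximating $g_n$ converge pointwise to the limit adf'' hides exactly this issue: there is no single limit adf here.

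What closes the gap is precisely the linear-family hypothesis, which you never invoke: from $f_{m,r}=\frac{1}{d}\sum_{i=0}^{d-1}f_{md,mi+r}$ and monotonicity one gets $I(f_{md,mi+r})\subseteq I(f_{m,r})$ for every $d$ and every admissible $i$.  Since along $mn+r$ one has $d(n)\equiv r\pmod m$ once $i(n)\ge m$, each $\overline{y}_n:=\inf\overline{f}^{-1}_{\,i(n)!,\,d(n)}(c(n)/i(n))$ lies in $I(\overline{f}_{i(n)!,d(n)})\subseteq I(\overline{f}_{m,r})$, and $E_{mn+r}/q_{mn+r}$ differs from either $\overline{y}_n$ or the analogous $\underline{y}_n$ by at most $\Delta_{i(n)}\to 0$; the containment follows because $I(\overline{f}_{m,r})\cup I(\underline{f}_{m,r})$ is closed.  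This is exactly how the paper argues, using the elementary fact that $\inf f^{-1}(y)\in I(f)$ for any non-decreasing $f$.  Note also that the paper works with the non-explicit form of the Moran set (using $\overline{f}$ and $\underline{f}$ directly rather than the approximants $\overline{g}_n,\underline{g}_n$), which spares the entire convergence discussion in your final paragraph; once you add the linear-family step, you can drop the $g_n$ analysis altogether.
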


\begin{lem}\labl{acc}
If $x \in I \left( \overline{f}_{m,r} \right) \cup I\left(\underline{f}_{m,r}\right)$, then $x$ is an accumulation point of $\OQmrxp$.
\end{lem}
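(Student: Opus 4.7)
The plan is to reduce the statement to a claim about $\OQmrx$ (whose upper and lower adfs are $\overline{f}_{m,r}$ and $\underline{f}_{m,r}$ by the theorem's hypothesis) and then transfer the conclusion back to $\OQmrxp$ via the elementary bound $|T_{Q,k}(x) - E_k/q_k| \leq 1/q_k$, which tends to $0$ because $Q$ is infinite in limit.

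The key sub-claim I would establish first is that every point $y$ at which $\overline{f}_{m,r}$ is strictly increasing, in the sense that $\overline{f}_{m,r}(y+\delta) > \overline{f}_{m,r}(y-\delta)$ for all sufficiently small $\delta > 0$, is an accumulation point of $\OQmrx$, and similarly with $\underline{f}_{m,r}$ in place of $\overline{f}_{m,r}$. The argument is by contradiction: if only finitely many (at most $N$) terms of $\OQmrx$ lie in $(y-\delta_0, y+\delta_0)$, then for every $n$,
$$A_n([0, y+\delta_0), \OQmrx) \leq A_n([0, y-\delta_0), \OQmrx) + N.$$
Dividing by $n$ and taking $\limsup$, and using $N/n \to 0$, would yield $\overline{f}_{m,r}(y+\delta_0) \leq \overline{f}_{m,r}(y-\delta_0)$, contradicting the strict increase at $y$. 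The analogous argument with $\liminf$ handles $\underline{f}_{m,r}$.

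Next, I would invoke the standard fact that the set of accumulation points of any sequence is closed in $[0,1]$. By the sub-claim, this closed set contains every point of increase of $\overline{f}_{m,r}$ or $\underline{f}_{m,r}$, and therefore also contains the closures of these two sets, namely $I(\overline{f}_{m,r}) \cup I(\underline{f}_{m,r})$. In particular, $x$ is an accumulation point of $\OQmrx$.

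Finally, since $q_{mn+r} \to \infty$, corresponding terms of $\OQmrx$ and $\OQmrxp$ differ by at most $1/q_{mn+r} \to 0$, so the two sequences share the same set of accumulation points. Hence $x$ is also an accumulation point of $\OQmrxp$. The only point requiring any care is tracking the direction of the $\limsup$/$\liminf$ inequality in the sub-claim; beyond this bookkeeping the argument should be short and pose no substantial obstacle.
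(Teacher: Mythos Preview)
Your proposal is correct and follows essentially the same approach as the paper: both hinge on the observation that if the upper (resp.\ lower) adf strictly increases across a point, then the $\limsup$ (resp.\ $\liminf$) of the proportion of terms in a small neighborhood is positive, forcing infinitely many terms there. Your version is in fact a bit more careful than the paper's in two respects---you explicitly pass from $\OQmrx$ (whose adfs are given by hypothesis) to $\OQmrxp$ via $|T_{Q,k}(x)-E_k/q_k|\le 1/q_k\to 0$, and you handle the closure in $I(\cdot)$ by invoking closedness of the accumulation-point set rather than asserting the increase property directly for closure points---but the underlying argument is the same.
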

\begin{proof}
Suppose that $x \in I \left( \overline{f}_{m,r} \right) \cup I\left(\underline{f}_{m,r}\right)$. If $x \in I\left( \overline{f}_{m,r}\right)$, then either $\overline{f}(x-\epsilon)<\overline{f}(x)$ or $\overline{f}(x)< \overline{f}(x+\epsilon)$ for all $\epsilon>0$. The proof for both of these cases is identical, so suppose that $\overline{f}(x-\epsilon)<\overline{f}(x)$. Then 
$$
\varlimsup_{n \to \infty} \frac{A_n([x-\epsilon,x), x_i )}{n}=\overline{f}(x)-\overline{f}(x-\epsilon)>0.
$$
 This implies that there are infinitely many $x_i \in [x-\epsilon, x)$ for all $\epsilon>0$. Let $\epsilon_n = \frac{1}{n}$. Construct a sequence $x_n$ by choosing $x_n \in [\overline{f}(x-\epsilon_n), \overline{f}(x))$. Thus $x$ is an accumulation point of $\OQmrxp$. The case where $x \in I\left( \underline{f}_{m,r}\right)$ is identical, replacing the $\varlimsup$ with $\liminf$.
\end{proof}

\begin{lem}
If $f:[0,1]\to [0,1]$ is non-decreasing and $x = \inf f^{-1}(y)$, then $x \in I(f)$.
\end{lem}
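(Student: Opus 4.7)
The plan is to split on whether the infimum is attained and in each case show directly that $x$ itself satisfies the ``increasing at $x$'' condition that $I(f)$ is built from, namely that for every $\varepsilon>0$ either $f(x-\varepsilon)<f(x)$ or $f(x)<f(x+\varepsilon)$ (this is the exact property extracted from $I(f)$ in the proof of \refl{acc}). Granting that, $x$ lies in the set whose closure defines $I(f)$, and the lemma follows.

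First I would record the two basic consequences of $x=\inf f^{-1}(y)$ and monotonicity. For any $z<x$, $z\notin f^{-1}(y)$ by minimality, while monotonicity combined with the existence of some $w\in f^{-1}(y)$ with $z<x\le w$ forces $f(z)\le f(w)=y$; together these give $f(z)<y$ for every $z<x$. Similarly, taking $z=x$ gives $f(x)\le y$. So the only two possibilities are $f(x)=y$ or $f(x)<y$.

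Case 1: $f(x)=y$. For any $\varepsilon>0$ with $x-\varepsilon\in[0,1]$, the preceding observation yields $f(x-\varepsilon)<y=f(x)$, so $f(x-\varepsilon)<f(x)$ for every such $\varepsilon$. Case 2: $f(x)<y$. Then $x\notin f^{-1}(y)$, yet $x$ is the infimum of a nonempty set of points all $\ge x$, so $x$ is a limit of $f^{-1}(y)$ from the right; for each $\varepsilon>0$ choose $w\in f^{-1}(y)$ with $x<w<x+\varepsilon$, giving $f(x+\varepsilon)\ge f(w)=y>f(x)$. In either case the ``$f$ is increasing at $x$'' condition holds for every $\varepsilon>0$, so $x$ lies in $\{z:f\text{ is increasing at }z\}\subseteq I(f)$.

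There is no real obstacle here beyond the case split; the proof is a direct consequence of monotonicity and the definition of infimum. The only subtlety to keep an eye on is the behaviour at endpoints $x\in\{0,1\}$: at $x=1$, Case 2 cannot occur (since $f^{-1}(y)\subseteq[0,1]$ forces the infimum to be attained), and at $x=0$ the argument in Case 2 supplies the right-hand inequality directly, so the one-sided versions of the increasing condition that are consistent with the usage in \refl{acc} suffice at the boundary.
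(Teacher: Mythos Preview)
Your proof is correct and follows essentially the same route as the paper's: both arguments show that $x$ itself is a point at which $f$ is increasing by exploiting monotonicity and the minimality of $x$ in $f^{-1}(y)$. The paper's version is shorter because it only argues Case~1: it asserts that for $z<x$ with $f(z)=f(x)$ one has $f(z)=y$, which tacitly presumes $f(x)=y$, i.e.\ that the infimum is attained. Your case split is therefore not a different method but a genuine completion of the same idea, since for a merely non-decreasing $f$ (not assumed right-continuous) the infimum need not lie in $f^{-1}(y)$; your Case~2 handles exactly this possibility via the right-hand inequality $f(x)<f(x+\varepsilon)$. In the paper's actual applications the relevant $f$ are adfs arising from measures and are right-continuous, so Case~2 does not arise there, but as stated the lemma needs your extra case.
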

\begin{proof}
Since $f$ is non-decreasing, if $z<x$, then $f(z)\leq f(x)$. Set $x= \inf f^{-1}(y)$. Then if $f(z)=f(x)$, we have that $f(z)=y$ and $z \in f^{-1}(y)$. But this contradicts $x$ being the infimum of $f^{-1}(y)$. Thus $f(z)<f(x)$. So we have that for all $z<x$, $f(z)<f(x)$, and $x \in I(f)$.
\end{proof}

\begin{proof}[Proof of \reft{thrm3.2}]
Let $x=E_0.E_1E_2\cdots\wrtQ$.
To see that the set of accumulation points of $\OQmrxp$ is exactly $I \left( \overline{f}_{m,r} \right) \cup I\left(\underline{f}_{m,r}\right)$, note that by \refl{acc}, the set of accumulation points of $\OQmrxp$ contains $I \left( \overline{f}_{m,r} \right) \cup I\left(\underline{f}_{m,r}\right)$. For the converse direction, note that by the construction, if $i(n)$ is even, then
$$
\frac{E_{n}}{q_{n}} \in \left[\inf \overline{f}_{i(n)!, n\bmod{i(n)!}}^{-1}\pr{\frac{c(n)}{i(n)}}, \inf \overline{f}_{i(n)!, n\bmod{i(n)!}}^{-1}\pr{\frac{c(n)}{i(n)}}+\Delta_{i(n)}-\frac{1}{q_n}\right].
$$ 
If $i(n)$ is odd, then
$$
\frac{E_{n}}{q_{n}} \in \left[\inf \underline{f}_{i(n)!, n\bmod{i(n)!}}^{-1}\pr{\frac{c(n)}{i(n)}}, \inf \underline{f}_{i(n)!, n\bmod{i(n)!}}^{-1}\pr{\frac{c(n)}{i(n)}}+\Delta_{i(n)}-\frac{1}{q_n}\right].
$$ 
The term $\Delta_{i(n)}-\frac{1}{q_n}$ goes to $0$. Define a sequence $\{ \overline{y}_n \}$ with 
$$
\overline{y}_n = \inf \overline{f}_{i(mn+r)!, {mn+r}\bmod{i(mn+r)!}}^{-1}\left(\frac{c(mn+r)}{i(mn+r)}\right).
$$ 
Define the sequence $\{\underline{y}_n\}$ similarly. Then 
$$
\liminf_{n \to \infty} \left( \frac{E_{mn+r}}{q_{mn+r}}- \overline{y}_n \right) = 0.
$$
 But $\overline{y}_n \in I\left(\overline{f}_{i(mn+r)!, {mn+r}\bmod{i(mn+r)!}}\right)$. Since $\{ \overline{f}_{m,r} \}$ is a linear family of adfs, we have that this set is a subset $I(\overline{f}_{m,r})$, so $\overline{y}_n \in I(\overline{f}_{m,r})$. As $I(\overline{f}_{m,r})$ is closed, all accumulation points of $\overline{y}_n$ must lie in $I(\overline{f}_{m,r})$. The same statements for $\underline{f}_{m,r}$ are true. Furthermore, we have that by construction $\frac{E_{mn+r}}{q_{mn+r}}$ is either arbitrarily close to the sequence $\{ \overline{y}_n\}$ or $\{\underline{y}_n\}$,  so the set of accumulation points of $\OQmrxp$ is contained in the set of accumulation points of $\{\overline{y}_n\} \cup \{ \underline{y}_n \}$.  Thus it is a subset of $I(\overline{f}_{m,r}) \cup I(\underline{f}_{m,r})$.
\end{proof}

\begin{definition}
Let $(X,T)$ be a topological space and $(X,\mathcal{B}(X), \mu)$ be the measure space of X with the Borel $\sigma -$algebra. Then
\begin{equation*}
\supp{\mu} := \{ x\in X : \text{for every neighborhood }N \text{ of }x, \mu(N)>0\}.
\end{equation*}
\end{definition}

\begin{lem}\labl{adfclosed}
For every closed set $D \subseteq [0,1]$, there is an adf $f_D$ so that $I(f_D)=D$.
\end{lem}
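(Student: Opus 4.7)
My plan is to build $f_D$ as the cumulative distribution function of a probability measure whose support is exactly $D$. Assuming $D$ is non-empty (a non-constant non-decreasing function must strictly increase somewhere, so the case $D=\emptyset$ cannot occur for an adf), I would pick a countable dense subset $\{d_n\}_{n=1}^{\infty}$ of $D$, which exists because $[0,1]$ is separable. Then set
\[
\mu \;=\; \sum_{n=1}^{\infty} 2^{-n}\,\delta_{d_n},
\]
a Borel probability measure on $[0,1]$. The first check is $\supp(\mu) = D$: any $x\in D$ has every neighborhood containing some $d_n$ by density, giving $\mu$-mass at least $2^{-n}>0$; conversely, any $x\notin D$ has a neighborhood in the open complement of the closed set $D\supseteq\{d_n\}$, so of $\mu$-measure zero.

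Next I would define $f_D(0)=0$ and $f_D(x)=\mu([0,x])$ for $x\in(0,1]$. This is non-decreasing, satisfies $f_D(0)=0$ by fiat and $f_D(1)=\mu([0,1])=1$, hence is an adf. (The only subtlety here is that if $0\in D$ and $\mu(\{0\})>0$ there is a jump at $0$, which is harmless.)

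The heart of the argument is showing $\{x\in[0,1] : f_D \text{ is increasing at } x\}=D$, after which closedness of $D$ gives $I(f_D)=\overline{D}=D$. For $x\notin D$, the open complement contains an interval $(x-\varepsilon,x+\varepsilon)$ disjoint from $\supp(\mu)$, so $f_D$ is constant there and fails to be increasing at $x$. For $x\in D\cap(0,1)$ and any $\varepsilon>0$, since $x\in\supp(\mu)$ we have $\mu((x-\varepsilon,x+\varepsilon))>0$; at least one of $\mu((x-\varepsilon,x])$ or $\mu((x,x+\varepsilon))$ is positive, yielding $f_D(x-\varepsilon)<f_D(x)$ or $f_D(x)<f_D(x+\varepsilon)$ respectively, matching the definition used in \refl{acc}. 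The boundary cases $x=0$ and $x=1$ (when in $D$) are handled with one-sided neighborhoods $[0,\varepsilon)$ and $(1-\varepsilon,1]$, each of positive $\mu$-measure by the support property.

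I anticipate no serious obstacle here. The main point to track carefully is the boundary behavior, specifically that forcing $f_D(0)=0$ does not disturb the increasing-at-$x$ characterization on the interior, and that one-sided neighborhoods at $0$ and $1$ still have positive $\mu$-measure when those endpoints lie in $D$. Once these details are checked, the construction immediately yields the required $f_D$.
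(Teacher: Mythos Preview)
Your proof is correct and follows the same overall strategy as the paper: build a Borel probability measure supported exactly on $D$ and take its cumulative distribution function as $f_D$, then verify that the points of increase coincide with $D$. The only real difference is the choice of measure---you use the purely atomic measure $\sum 2^{-n}\delta_{d_n}$ on a countable dense subset, whereas the paper defines its $\mu_D$ via a sup-of-interval-lengths construction; your choice is more elementary and makes the verification of $\supp(\mu)=D$ and of the increase set completely transparent.
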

\begin{proof}
First we construct a measure $\mu_D$ so that $\supp{\mu_D}=D$. First consider the Borel $\sigma-$algebra of $D$ with the subspace topology. Define 
$$
\mu_D^*(S) = \sup \{ b-a : (a,b)\cap D = (a,b) \cap S \}.
$$
 Extend this measure to the Borel $\sigma-$algebra on $[0,1]$ by $\mu_D(S) = \mu_D^*(S\cap D)$. Then $\supp{\mu_D}=D$, $\mu_D(D)=1$. If $S\cap D = \emptyset$, then $\mu_D(S)=0$. Let $f_D(0)=0$ and $f_D(x) = \mu_D([0,x])$ if $x>0$. Then $f_D(0) = 0$, $f_D(1)=\mu_D([0,1]\cap D) = 1$, and $f_D$ is non-decreasing since $\mu_D([0,x])\leq \mu_D([0,y])$ if $x\leq y$. If $S$ is a set that does not intersect $D$, then $\mu_D(S)=0$ and $f_D(x)=f_D(y)$ for all $x$, $y \in S$. If $x,y \in D$ with $y>x$, then $f_D(y)-f_D(x) = \mu_D([x,y])\geq y-x>0$.
\end{proof}

\begin{proof}[Proof of \reft{strongerWangWenXi}]
We will define adfs $f_{q,s}$ such that if $q \leq k$ we will have $I(f_{q,s}) = D_{q,s}$ and if $q>k$ we will have that $I(f_{q,s}) \subseteq D_{m,r}$ for some $m\leq k$. Applying \reft{thrm3.2} will yield the desired result.

For $q \leq k$, let $A_{q,s} = D_{q,s}$.
For $q >k$, let 
\begin{equation*}
A_{q,r} =  \bigcap_{d | q} D_{d, r\bmod{d}} \cap \bigcap_{d \nmid q} \bigcap_{j=0}^{d-1} D_{d,j}.
\end{equation*}
Define $f_{q,s} = f_{A_{q,s}}$, where $f_D$ is as defined in \refl{adfclosed}.
Then by \reft{thrm3.2}, the sequence $\OQmrxp$ has accumulation points $D_{m,r}$ for $m\leq k$.
\end{proof}


\subsection{Proof of \reft{NormNotAP}, \reft{OlsenGen}, and \reft{noFrequency}}\labs{OtherProofs}

\begin{proof}[Proof of \reft{NormNotAP}]
The first assertion follows directly from \refl{SameADF}.
For the second, the proof that the set of $Q$-distribution normal numbers is meagre follows identically to the case of the $b$-ary expansion.  The fact that the set of $Q$-distribution normal numbers and the set of numbers $x$ where $\OQmrx$ is u.d. mod $1$ have full measure for all $m>1$ follows by a well known result of H. Weyl.  Thus their difference set has measure zero.

Let $d(n) = \frac{\lcm(1,2,\cdots n)}{\lcm(1,2,\cdots, n-1)}$, $D(n) = \prod_{i=1}^n d(i)$, and
$\Gamma(n,r) = \{ (r_1, r_2, \cdots, r_{n-1}) \in \mathbb{N}^{n-1} : \exists x \in \mathbb{N} \hbox{ where } x \equiv {r_t}\bmod{t} \ \forall t \in [1,n] \hbox{ and } x \equiv r\bmod{n}\}.$
 Set $S_{1,0} = [0,1]$. For all $n$ and $0 \leq r \leq n-1$, set
$$
S_{n,r} = \bigcup_{\vec{r} \in \Gamma(n,r)} \left(\frac{1}{d(n)} \bigcap_{i=1}^{n-1} S_{i,\vec{r}_i} \right)+\frac{r\bmod{d(n)}}{D(n)}.
$$
 For $r \neq r'$, we have that $S_{n,r} \cap S_{n,r'} = \emptyset$ and $\lambda (S_{n,r} ) = \lambda (S_{n,r'} )$. Furthermore, for all $d$, we have that $S_{m,r} = \bigcup_{i=0}^{d-1} S_{md, mi+r}$. Using notation from  \refl{adfclosed}, define $f_{m,r} = f_{S_{m,r}}$.  Thus, we see that $\{ f_{m,r} \}$ is a linear family of adfs. Moreover, $f_{n,r}(x)=x$ for all $x$ only if $n=1$ and $r=0$. Thus the set of real numbers $x$ where $\OQmrx$ has the adf $f_{m,r}$ for all $m$ and $r$ is a subset of the set of numbers that are $Q$-distribution normal but AP $Q$-distribution abnormal and has full Hausdorff dimension by \refmt{Main}.
\end{proof}


\begin{proof}[Proof of \reft{OlsenGen}]
Define the adf $F$ as follows. Let $c_I, c_J \geq 0$ be constants such that $c_I \lambda(I) = c_J \lambda(J) = x_0$. Since $\lambda(I) + \lambda(J) < 1$, we can find an interval $K$ such that $K \cap (I \cup J)  = \emptyset$ and $\lambda(K) > 0$. Let $c_K \geq 0$ be a constant such that $c_I \lambda(I) + c_J \lambda(J) + c_K \lambda(K) = 1$. Define $$
\phi(x) =
  \begin{cases}
   c_I & \text{if } x \in I \\
   c_J       & \text{if } x \in J \\
   c_K & \text{if } x \in K \\
   0 & \text{if } x \notin I \cup J \cup K
  \end{cases}.$$
Then $\phi$ is integrable,  so we can apply the Lebesgue differentiation theorem to conclude that $F(x) = \int_0^x \phi(t) dt$ is differentiable almost everywhere with $F'(x)~=~\phi(x)$. As $\phi$ is non-negative, we have that $F$ is non-decreasing. Furthermore, $F(0) = 0$. Since $c_A \lambda(A) + c_B \lambda(B) + c_C \lambda(C)  = 1$, we have that $F(1) = 1$. Thus $F$ is an adf. Since $F'(x)=\phi(x)$ almost everywhere, we have for almost every $x \in I$ (respectively $x \in J$) that $F'(x) = c_I$ (respectively $F'(x) = c_J$). Thus if $\omega = \{ x_n \}$ is a sequence with adf $F$, we have that 
\begin{align*}
&\lim_{ n \to \infty} \frac{A_n(I, \omega)}{n} = \int_I F'(x) dx = c_I \lambda(I);\\
&\lim_{ n \to \infty} \frac{A_n(J, \omega)}{n} = \int_J F'(x) dx = c_J \lambda(J). 
\end{align*}
Thus for any sequence with adf $F$, we have that 
$$
f\pr{\lim_{ n \to \infty} \frac{A_n(I, \OQx)}{n}  } = g\pr{\lim_{ n \to \infty} \frac{A_n(J, \OQx)}{n}}.
$$ 
By \refmt{Main}, we have that the set of real numbers $x$ such that $\OQx$ has adf $F$ has full Hausdorff dimension,  so we are done.
\end{proof}

\begin{proof}[Proof of \reft{noFrequency}]
Let $P$ be a set of sparsely intersecting polynomials and
let $\overline{f}_{p,m,r}(x) = x$ and $\underline{f}_{p,m,r}(x) = x^2$ for every $x \in [0,1]$ and $p \in P$. Then $\Phi_{Q,P,F}$ has full Hausdorff dimension.  Moreover, for every $x \in \Phi_{Q,P,F}$, the upper and lower adfs of $\OQfmrx{p}$ are not equal. Thus $\OQfmrx{p}$, and in particular $\OQx$, does not have an adf for all $x$ on a set of full Hausdorff dimension.  The set  $\Phi_{Q,P,F}$ is of zero measure since for almost every $x$ the sequence $\OQx$ is u.d. mod 1.
\end{proof}

\appendix
\section*{Appendix}
I. Niven and H. S. Zuckerman wrote in \cite{NivenZuckerman}:
\begin{quotation}
Let $R$ be a real number with fractional part $.x_1x_2x_3\cdots$ when written to scale $r$.  Let $N(b,n)$ denote the number of occurrences of the digit $b$ in the first $n$ places.  The number $R$ is said to be {\bf simply normal} to scale $r$ if $\lim_{n \to \infty} \frac {N(b,n)} {n}=\frac {1} {r}$  for each of the $r$ possible values of $b$; $R$ is said to be {\bf normal} to scale $r$ if all the numbers $R, rR, r^2R,\cdots$ are simply normal to all the scales $r, r^2, r^3, \cdots$.  These definitions, for $r=10$, we introduced by \'{E}mile Borel \cite{BorelNormal}, who stated (p. 261) that ``la propri\'{e}t\'{e} caract\'{e}ristique'' of a normal number is the following: that for any sequence $B$ whatsoever of $v$ specified digits, we have
\begin{equation}\labeq{quote}
\lim_{n \to \infty} \frac {N(B,n)} {n}=\frac {1} {r^v},
\end{equation}
where $N(B,n)$ stands for the number of occurrences of the sequence $B$ in the first $n$ decimal places
If the number $R$ has the property \refeq{quote} then any sequence of digits
$
B=b_1b_2\cdots b_v
$
appears with the appropriate frequency, but will the frequencies all be the same for $i=1,2,\cdots,v$ if we count only those occurrences of $B$ such that $b_1$ is an $i,i+v,i+2v,\cdots-th$ digit?  It is the purpose of this note to show that this is so, and thus to prove the equivalence of property \refeq{quote} and the definition of normal number.
\end{quotation}
It is not difficult to see how the equivalent definition of normality introduced in \reft{basebnormalii} may be confused with the notion discussed in \reft{basebnormalii}.


\section*{Acknowledgment}

Research of the authors is partially supported by the U.S. NSF grant DMS-0943870.  The authors would like to thank Lior Fishman and Mariusz Urbanski for helpful suggestions and reading this manuscript and Vitaly Bergelson for pointing us to the reference \cite{BiluTichy}.  We thank Peter Mueller and Michael Zieve for pointing us to the reference \cite{Tengely} and \url{http://mathoverflow.net/} for providing a forum where we could ask about this.  We are grateful to Kostos Beros for answering our questions on recursion theory.

\bibliographystyle{amsplain}

\providecommand{\bysame}{\leavevmode\hbox to3em{\hrulefill}\thinspace}
\providecommand{\MR}{\relax\ifhmode\unskip\space\fi MR }
\providecommand{\MRhref}[2]{%
  \href{http://www.ams.org/mathscinet-getitem?mr=#1}{#2}
}
\providecommand{\href}[2]{#2}

\end{document}